\documentclass[a4paper,11pt,reqno]{amsart}
\usepackage{lmodern}

\usepackage{amssymb}
\usepackage{siunitx}
\usepackage{epsfig}
\usepackage{amsfonts,amsrefs}
\usepackage{amsmath}
\usepackage{euscript}
\usepackage{amscd}
\usepackage{amsthm}
\usepackage{enumitem}
\DeclareMathAlphabet{\mathpzc}{OT1}{pzc}{m}{it}
\usepackage{enumitem}
\usepackage{color}
\usepackage{mathtools}
\usepackage[hypertexnames=false, colorlinks, citecolor=red, linkcolor=blue, urlcolor=red]{hyperref}
\usepackage[utf8]{inputenc}
\usepackage[T1]{fontenc} 
\usepackage{marginnote}
\usepackage{marvosym}

\usepackage[normalem]{ulem}

\newcommand{\blfootnote}[1]{%
  \begingroup
  \renewcommand\thefootnote{}\footnote{#1}%
  \addtocounter{footnote}{-1}%
  \endgroup
}

\newcommand{\marginextend}[1]{ \addtolength{\oddsidemargin}{-#1}  \addtolength{\evensidemargin}{-#1}
	\addtolength{\textwidth}{#1}\addtolength{\textwidth}{#1}}
\newcommand{\updownextend}[1]{ \addtolength{\topmargin}{-#1}  \addtolength{\textheight}{#1}
	\addtolength{\textheight}{#1}}
\marginextend{1.5cm}
\updownextend{0cm}
\allowdisplaybreaks[4]

\usepackage{pst-node}
\usepackage{tikz-cd}
\usepackage{mathrsfs}
\usepackage[most]{tcolorbox}

\DeclareFontFamily{OT1}{pzc}{}
\DeclareFontShape{OT1}{pzc}{m}{it}{<-> s * [1.10] pzcmi7t}{}
\DeclareMathAlphabet{\mathpzc}{OT1}{pzc}{m}{it}

\DeclareSymbolFont{SY}{U}{psy}{m}{n}
\DeclareMathSymbol{\emptyset}{\mathord}{SY}{'306}

\theoremstyle{plain}

\newtheorem{thm}{Theorem}[section]
\newtheorem*{thm*}{Theorem}
\newtheorem{cor}[thm]{Corollary}
\newtheorem{lem}[thm]{Lemma}
\newtheorem{prop}[thm]{Proposition}
\newtheorem{defn}[thm]{Definition}
\newtheorem{rem}[thm]{Remark}

\newtheoremstyle{mainthmstyle}%
  {}{}
  {\itshape}
  {}
  {\bfseries}
  {}
  {0pt}
  {\thmname{#1}. \thmnote{#3}} 

\theoremstyle{mainthmstyle}

\newtheoremstyle{named}{}{}{\itshape}{}{\bfseries}{.}{.5em}{#1 \thmnote{#3}}
\theoremstyle{named}

\tcolorboxenvironment{maintheorem}{
  colback=white,
  colframe=black,
  boxrule=0.8pt
}

\numberwithin{equation}{section}

\def\beq{\begin{eqnarray}}
	\def\eeq{\end{eqnarray}}
\def\beqa{\begin{eqnarray*}}
	\def\eeqa{\end{eqnarray*}}

\newcommand{\be}{\begin{equation}}
	\newcommand{\ee}{\end{equation}}
\newcommand{\bea}{\begin{eqnarray}}
	\newcommand{\eea}{\end{eqnarray}}
\newcommand{\Bea}{\begin{eqnarray*}}
	\newcommand{\Eea}{\end{eqnarray*}}

\newcommand{\clipnorm}{$_{\text{\rm c}}$Lip-norm}
\newcommand{\clipnorms}{$_{\text{\rm c}}$Lip-norms}

\newcounter{cnt1}
\newcounter{cnt2}
\newcounter{cnt3}
\newcommand{\blr}{\begin{list}{$($\roman{cnt1}$)$}
		{\usecounter{cnt1} \setlength{\topsep}{0pt}
			\setlength{\itemsep}{0pt}}}
	\newcommand{\bla}{\begin{list}{$($\alph{cnt2}$)$}
			{\usecounter{cnt2} \setlength{\topsep}{0pt}
				\setlength{\itemsep}{0pt}}}
		\newcommand{\bln}{\begin{list}{$($\arabic{cnt3}$)$}
				{\usecounter{cnt3} \setlength{\topsep}{0pt}
					\setlength{\itemsep}{0pt}}}
			\newcommand{\el}{\end{list}}
		
		\vspace{5mm}

\title[Metric dimension of $C^{\ast}$-algebras of cocycle twisted transformation groupoids]{Metric dimension of $C^{\ast}$-algebras of cocycle twisted transformation groupoids: Growth and dynamical complexity}

\author[A. Chattopadhyay]{Arnab Chattopadhyay}
\author[S. Joardar]{Soumalya Joardar}

\address[A. Chattopadhyay]{Indian Institute of Science Education And Research Kolkata, Mohanpur 741246, Nadia, West Bengal, India} \email{ac23rs002@iiserkol.ac.in}

\address[S. Joardar]{Indian Institute of Science Education And Research Kolkata, Mohanpur 741246, Nadia, West Bengal, India} \email{soumalya@iiserkol.ac.in}
	
\begin{document}
\begin{abstract}
 We consider a natural CQMS structure on a twisted transformation groupoid $C^{\ast}$-algebra coming from stratified \clipnorm\ as discussed in \cite{austad2026quantum}. We obtain upper bounds of metric dimension of reduced $C^{\ast}$-algebra of a transformation groupoid $\Gamma\rtimes X$ and its cocycle twist for a suitably chosen CQMS structure, provided $(X,d)$ is a compact metric space of finite Kolmogorov dimension and $\Gamma$ is a discrete group of polynomial growth. When $\Gamma$ has exponential growth, we prove that the dimension is generically $+\infty$ proving that the dichotomy between polynomial growth and exponential growth of groups survive even after considering cocycle twists of transformation groupoids. 
\end{abstract}

\maketitle
\blfootnote{\textbf{Mathematics Subject Classification (2020):} Primary 46L05, 46L55; Secondary 20F65.}
\blfootnote{\textbf{Keywords:} Compact quantum metric space, Metric dimension, Transformation groupoid, $C^{\ast}$-dynamical system.}
\section{Introduction and main results}
Compact quantum metric spaces are natural generalizations of classical compact metric spaces in the operator algebraic setting. It was initially proposed by A. Connes within his spectral triple framework (\cite{Connes1}). Then a general notion of compact quantum metric space was systematically developed by M. Rieffel in a series of papers (\cites{Rieffel2, Rieffel-1998-Metrics-on-state-from-action-of-cmpt-gp, Rieffel-2002-Group-C-alg-as-compact-quan-metr-sp, Rieffel1}). Since then, the literature on compact quantum metric spaces has received a wide contribution from many authors (\cites{Christensen, Long-Wu-2017-Twisted-group-C-alg-as-CQMS, latremoliere2013quantum, latremoliere2015quantum, latremoliere2016quantum, latremoliere2020convergence, latremoliere2022gromov, aguilar2023domains, MR2504529}). Since compact quantum metric spaces are generalizations of classical metric spaces, it is quite natural to extend quantities like classical Kolmogorov dimension, entropy of a classical compact metric spaces to the set up of compact quantum metric spaces. In \cite{Kerr}, the metric dimension was defined for a compact quantum metric spaces on unital $C^{\ast}$-algebra. It was shown that the metric dimension is able to recover the classical Kolmogorov dimension of compact metric spaces when applied to the unital $C^{\ast}$-algebra $C(X)$ in an appropriate framework. In the same paper Kerr also defined the product entropy of an automorphism of a unital $C^{\ast}$-algebra. The Kolmogorov dimension, various entropy are meaningful invariants which help in studying classical metric spaces, topological dynamical systems. So it is expected that Kerr's metric dimension and product entropy would play a similar role in the study of noncommutative metric geometry and noncommutative dynamical systems. But surprisingly, after the fundamental paper where Kerr computes metric dimension and product entropy for some concrete examples, there is hardly any follow-up. Motivated by this, the authors of this paper investigated metric dimension and product entropy in the context of natural compact quantum metric space structures on group $C^{\ast}$-algebras of finitely generated discrete groups in \cite{chattopadhyay2026metric}. It was found that the growth information of the underlying groups is largely remembered by the invariants like metric dimension and product entropy. \\
\indent In this context, it is quite natural to look at $C^{\ast}$-algebras of transformation groupoids as a testing ground for the invariants like metric dimension and entropy. When a discrete group $\Gamma$ acts on a compact metric space $X$, the corresponding transformation groupoid $\Gamma \rtimes X$ encodes both the topology of the phase space and the dynamics of the action. More generally, a twisting by a continuous $2$-cocycle produces a twisted transformation groupoid whose reduced $C^{\ast}$-algebra incorporates additional geometric and representation-theoretic information. These algebras form a natural class in which one may investigate the interaction between dynamics, operator algebras, and noncommutative metric geometry. The purely $C^{\ast}$-algebraic properties of transformation groupoid $C^{\ast}$-algebras are well studied and well understood. But the quantum metric space structures on the transformation groupoid $C^{\ast}$-algebras are largely unexplored. But to begin such investigation one has to start with some compact quantum metric structure. Thanks to a recent work of Austad (\cite{austad2026quantum}), there is a natural compact quantum metric space structure on general \'etale groupoid $C^{\ast}$-algebras with compact unit space. The structure is given by restricting the domain of the stratified \clipnorm\ to a sub-operator system consisting of functions with uniform finite support and subsequently exploiting rapid decay property with respect to some length function on the underlying groupoid to extend the CQMS structure to the domain of the stratified \clipnorm.
In particular, when a discrete group $\Gamma$ acts on a compact metric space $X$, the associated groupoid becomes an \'etale groupoid and one can adopt the similar techniques to obtain a class of compact quantum metric space structures on transformation groupoid $C^{\ast}$-algebras by invoking rapid decay property with respect to some length function on the underlying groupoid. In this paper, we also consider $2$-cocycle twists of transformation groupoid $C^{\ast}$-algebras and show that the natural compact quantum metric space structures can be extended to cocycle twists as well. We mainly focus on the understanding of metric dimensions of such structures and dependence of the metric dimension on the underlying phase space, the growth geometry of the acting discrete group and the dynamics in general. Now, let us briefly explain the main results obtained in this paper and their potential significance in the study of noncommutative geometry.\\
\indent The first important theorem in this paper is Theorem \ref{CQMS} where the main class of CQMS structures is obtained on the reduced $C^{\ast}$-algebra of a transformation groupoid and its $2$-cocycle twists. The main inputs required to obtain such CQMS structures are a length function on the transformation groupoid such that the groupoid has twisted rapid decay property in the sense of Definition \ref{rapid decay} and a stratification which is a collection of metrics on each fibre satisfying suitable hypothesis. A collection of metrics on each fibre is called a metric stratification. We consider two types of stratifications: static and dynamic. In static stratification, the metric on each fiber is the base metric of the compact phase space. We have our second main result of the paper (Theorem \ref{general cocycle bound} and Theorem \ref{betterbound}) which says that under the static stratification, the metric dimension of a cocycle twisted transformation groupoid $C^{\ast}$-algebra stays bounded provided the phase space has finite Kolmogorov dimension and the groupoid has polynomial growth property with respect to a length function that satisfies a suitable uniform Lipschitz condition \ref{Lip-length}. Then we feed the dynamics into the metric stratification and consider a dynamic stratification (see Subsection 3.1.2). Our choice makes the metric dimension read the dynamics. We have our third main result (Theorem \ref{dynamicbound}) which says that the metric dimension of the cocycle twisted groupoid $C^{\ast}$-algebra still remains finite for a finite dimensional phase space and polynomial growth provided the action is {\bf uniformly Lipschitz}. But we show by an example (Corollary \ref{dynamic_complexity}) that the metric dimension can blow up for a dynamic stratification even if the phase space has finite Kolmogorov dimension, one has polynomial growth and a uniformly equicontinuous action. The key takeaway is the following:\vspace{0.05in}\\
{\it The metric dimension can detect the dynamical complexity in the polynomial growth regime in an essential way, provided we feed the dynamics into the metric stratification to obtain the compact quantum metric space structure}.\vspace{0.05in}\\
\indent Our next important result is in the exponential growth regime. By the exponential growth regime, we mean the reduced $C^{\ast}$-algebra of a cocycle twisted transformation groupoid where the acting group admits exponential growth. This regime is already interesting in the context of rapid decay property. Unlike the case where the acting group has polynomial growth, in the exponential growth regime, the rapid decay property does not automatically pass from the group to the groupoid. We fix static stratification for exponential growth regime and investigate whether the growth complexity of the underlying group can influence the metric dimension. Our first main result (Theorem \ref{stratification_infinity}) in the exponential growth regime is that when the twisted transformation groupoid has the twisted rapid decay property, the metric dimension of the compact quantum metric space structure coming from static stratification is $+\infty$. We go on to prove a much stronger statement regarding the metric dimension. Indeed, we prove that (Theorem \ref{infinity_Mdim}) in the exponential growth regime, one can not have reasonably regular compact quantum metric space structure with finite metric dimension. In particular, it rules out any compact quantum metric space structure coming from a spectral triple in the sense of A. Connes. The results are able to recover the group $C^{\ast}$-algebra cases studied in \cite{chattopadhyay2026metric} by taking the phase space to be a singleton point and subsequently extend for cocycle twisted group $C^{\ast}$-algebras. This is intriguing even at the level of reduced untwisted group $C^{\ast}$-algebras from the perspective of noncommutative geometry. Let us explain why.\vspace{0.02in}\\
\indent On the one hand, it is known that every $K$-homology class of such algebras may admit representatives by finitely summable bounded Fredholm modules (\cite{emerson2018k}). On the other hand, for non-amenable groups, no finitely summable unbounded Fredholm module can exist (\cite{Connes1}), highlighting a striking disparity between bounded and unbounded representatives of the same $K$-homology classes. More generally, for exponential growth groups it remains unknown whether finitely summable unbounded Fredholm modules can occur in the amenable setting. Our results provide a complementary perspective on this question.  Since spectral triples constitute the unbounded representatives of $K$-homology classes and encode metric information absent from bounded Fredholm modules, our result suggests that exponential growth imposes an intrinsic obstruction at the level of noncommutative metric geometry. This phenomenon points to a fundamental distinction between homological finiteness and metric finiteness in noncommutative geometry. We believe that infinite metric dimension can be viewed as evidence that exponential growth forces any spectral-triple realization of the geometry to be ``infinite dimensional'' in a metric sense, regardless of the summability properties of bounded Fredholm representatives. So a key takeaway is the following rigidity phenomenon in the exponential growth regime:\vspace{0.05in}\\
{\it For the reduced $C^{\ast}$-algebra of a cocycle twisted transformation groupoid where the acting group has exponential growth, the metric dimension is generically $+\infty$ for a reasonable class of compact quantum metric space structures}.\vspace{0.05in}\\
 Summarizing, the results of this paper indicate that for a classical dynamical system, both the dynamical complexity of the dynamics and the growth complexity of the acting group are read in an essential way by compact quantum metric space structures on the reduced $C^{\ast}$-algebra of the cocycle twists of the corresponding transformation groupoids. The growth complexity is particularly striking. Because purely at the $C^{\ast}$-algebraic level, the growth information of the acting group might be lost. This is further reflected in the loss of rapid decay property in transformation groupoids coming from free actions of exponential growth group with rapid decay property (\cite{Weygandt-2024-Rapid-decay-for-etale-gpd}*{Theorem 3.2}) or \cite{buss2026rapid}*{Section 7.6}. In particular, the results of this paper emphasize the importance of metric dimension in the study of noncommutative geometry.\vspace{0.05in}\\
 {\bf Acknowledgement}: The first author acknowledges the financial support under the Senior Research Fellowship Scheme funded by UGC. The authors are grateful to Dr. Shubhabrata Das for several helpful discussions on geometric group theory.
\section{Preliminaries}
\subsection{Compact quantum metric space}
We begin this subsection with the definition of compact quantum metric space (CQMS for short). Compact quantum metric spaces can be defined for order unit spaces. But we shall deal with only unital $C^{\ast}$-algebras in this paper and therefore we define CQMS for unital $C^{\ast}$-algebras. We refer the reader to \cite{Rieffel1} for more details. Let $A$ be a unital $C^{\ast}$-algebra with unit $1_{A}$. A seminorm $L:A\rightarrow[0,+\infty]$ on $A$ possibly taking the value $+\infty$ is said to separate the state space $S(A)$ if given $\phi,\psi\in S(A)$ there is an element $a\in A$ such that $L(a)<+\infty$ and $\phi(a)\neq\psi(a)$. Given a separating seminorm $L$ on $A$, we denote the set $\{a\in A:L(a)<+\infty\}$ by $\mathrm{Dom}(L)$ and call it the domain of $L$. We denote the set $\{a\in\mathrm{Dom}(L):L(a)\leq 1\}$ by $L_1$. Then one defines a metric on the state space $S(A)$ by the following formula:
\begin{displaymath}
    d_{L}(\phi,\psi)=\sup\limits_{a\in L_{1}}\lvert\phi(a)-\psi(a)\rvert,
\end{displaymath}
for $\phi,\psi\in S(A)$. \\
A \clipnorm\ on $A$ is a separating seminorm $L:A\rightarrow[0,+\infty]$ such that
\begin{itemize}
    \item ${\textrm Ker}(L)=\mathbb{C}1_{A}$.
    \item $d_{L}$ metrizes the weak $\ast$-topology on $S(A)$.
\end{itemize}
\begin{defn}\label{CQMSdefn}
    A compact quantum metric space is a pair $(A,L)$ such that $A$ is a unital $C^{\ast}$-algebra and $L$ is a \clipnorm\ on $A$.
\end{defn}
A CQMS structure on a unital $C^{\ast}$-algebra $A$ is aid to come from a spectral triple if\\
(i) $\mathcal{A}\subset A$ is some dense $\ast$-subalgebra and $(\mathcal{A},H,D)$ is some spectral triple.\\
(ii) The densely defined seminorm $L$ on $\mathcal{A}$ given by $a\mapsto\lvert\lvert[D,a]\rvert\rvert$ is a \clipnorm\ in our sense.\\
It is easy to see that in this case the \clipnorm\ $L$ satisfies the Leibnitz property $L(ab)\leq L(a)\lvert\lvert b\rvert\rvert+\lvert\lvert a\rvert\rvert L(b)$.
\begin{rem}
    We have essentially taken the definition of a \clipnorm\ as given in Definition 2.3 of \cite{Kerr}. But we have dropped the adjoint invariance of the \clipnorm\ . Note that under the assumption of adjoint invariance, for a CQMS structure, it is enough to look at the self-adjoint part of the unit Lip-ball to recover the metric on the state space (see the discussion after \cite{Kerr}*{Definition 2.1}). We do not require that and we shall see that this asymmetry is forced in our main example of CQMS. Neither do we assume any Leibnitz property of the $_{\text{\rm c}}$Lip-norm. M. Rieffel originally considered Lip-norms for his definition of compact quantum metric spaces, but we stick to the \clipnorms\ in the sense of \cite{Kerr} without the adjoint invariance condition. In our case as $\mathrm{Dom}(L)$ will always be dense in $A$, $L$ will separate points in $S(A)$ in the sense of \cite{Kerr}. 
\end{rem}
Now we recall the definition of metric dimension of a CQMS $(A,L)$ where $A$ is a unital $C^{\ast}$-algbera. To that end we recall a few notations from \cite{Kerr}. For a normed linear space $(X,\| \cdot \|)$ (either a $C^{\ast}$-algebra or a Hilbert space for us), $\mathcal{F}(X)$ will denote the collection of all finite dimensional subspaces of $X$. If $Y,Z$ are subsets of $X$, then for $\delta>0$, the notation $Y\subseteq_{\delta} Z$ will mean that for every $y\in Y$, there is some $z\in Z$ such that $\lvert\lvert y-z\rvert\rvert<\delta$. For any subset $Y\subset X$, $D(Y,\delta)=\inf\{{\textrm{dim}}(Z):Z\in\mathcal{F}(X), Y\subseteq_{\delta}Z\}$, where $\textrm{dim}(Z)$ is the vector space dimension of $Z$. Then it is easy to see that if $Y_{1}\subseteq Y_{2}$, $D(Y_{1},\delta)\leq D(Y_{2},\delta)$ for any $\delta>0$. Now let $(A,L)$ be a compact quantum metric space on a unital $C^{\ast}$-algebra in the sense of Definition \ref{CQMSdefn}. Then we denote the set $\{a\in A:L(a)\leq 1, \lvert\lvert a\rvert\rvert\leq 1\}$ by $\mathcal{L}_1$. Then we have the following:
\begin{defn}
    The metric dimension of a CQMS $(A,L)$ is defined to be
    \begin{displaymath}
    \mathrm{Mdim}_{L}(A):=\limsup\limits_{\delta\to 0^{+}} \frac{\log D(\mathcal{L}_{1},\delta)}{\log  \delta^{-1}}
    \end{displaymath}
\end{defn}
\begin{rem}
     Notice that The set $\mathcal{L}_{1}$ chosen here is slightly different from the set $\mathcal{L}_1$ chosen in \cite{Kerr} to define the metric dimension. But our choice is able to recover all the existing results about metric dimension. 
\end{rem}
It turns out that for any compact metric space $(X, d),$ the metric dimension of of $C (X)$ with respect to the associated \clipnorm\ on $C (X)$ coincides with its Kolmogorov dimension. To this end, we shall recall Kolmogorov dimension of a compact metric space $(X, d).$ For more details, the readers can refer to \cites{MR124720, MR1187754}.
\begin{defn}
Given a compact metric space $(X, d),$ let $N (\delta, d)$ be the minimal cardinality of a cover of $X$ by $\delta$-balls. Then the \textit {Kolmogorov dimension} of $(X, d),$ denoted by $\dim_B (X, d),$ is defined as 
$$\dim_B (X, d) = \limsup\limits_{\delta \to 0^{+}} \frac {\log N (\delta, d)} {\log \delta^{-1}}.$$
Kolmogorov dimension is often alternatively called the \textit {upper box counting dimension} or \textit {upper Minkowski dimension}.
\end{defn}
It is worth noting that Kerr’s proof of the metric dimension as an upper bound of the Kolmogorov dimension relies exclusively on elements of norm $1.$ Therefore the following proposition is a verbatim adaptation of \cite{Kerr}*{Proposition 3.9}. 
\begin{prop}
Let $(X, d)$ be a compact metric space and $L_X$ be the associated \clipnorm\ on $C (X),$ that is, $$L_X (f) = \sup\limits_{x \neq y} \frac {\left \lvert f (x) - f (y) \right \rvert} {d (x, y)},$$ for all $f \in C (X).$ Then
$$\mathrm {Mdim}_{L_X} (C (X)) = \dim_B (X, d).$$
\end{prop}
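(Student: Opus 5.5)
We prove the two inequalities separately, following Kerr's scheme and using that $\mathcal{L}_1=\{f: L_X(f)\le1,\ \|f\|_\infty\le1\}$ is the unit Lipschitz ball intersected with the unit ball of $C(X)$.

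\emph{Upper bound.} Fix $\delta>0$ and choose a maximal $\delta$-separated set $\{x_1,\dots,x_n\}$ in $X$, so that $n\le N(\delta/2,d)$ and the balls $B(x_i,\delta)$ cover $X$. Take a Lipschitz partition of unity subordinate to this cover, for instance $\phi_i=\psi_i/\sum_j\psi_j$ with $\psi_i(x)=\max(0,\delta-d(x,x_i))$. Let $Z=\mathrm{span}\{\phi_1,\dots,\phi_n\}$, so $\dim Z\le n$. For $f\in\mathcal{L}_1$ put $g=\sum_i f(x_i)\phi_i\in Z$. Then
\[
|f(x)-g(x)|\le\sum_i\phi_i(x)\,|f(x)-f(x_i)|\le\delta,
\]
because $\phi_i(x)\neq0$ forces $d(x,x_i)<\delta$ and $L_X(f)\le1$. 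Hence $\mathcal{L}_1\subseteq_{2\delta}Z$, and so $D(\mathcal{L}_1,2\delta)\le N(\delta/2,d)$. Taking $\log/\log\delta^{-1}$ and the $\limsup$ gives $\mathrm{Mdim}\le\dim_B$, since constant rescalings of $\delta$ do not change the limit.

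\emph{Lower bound.} This is the main obstacle, since we must bound the dimension of \emph{any} approximating subspace from below. Choose a maximal $4\delta$-separated set $\{y_1,\dots,y_m\}$, so that $m\ge N(4\delta,d)$. For each $i$ define the bump $h_i(x)=\max(0,\,2\delta-d(x,y_i))$. Each $h_i$ is $1$-Lipschitz, and the $h_i$ have disjoint supports. Therefore for any signs $\varepsilon\in\{\pm1\}^m$ the function $f_\varepsilon=\sum_i\varepsilon_ih_i$ is still $1$-Lipschitz: two points in different supports are at distance at least $2\delta$ from the supports' edges, which bounds the jump. Also $\|f_\varepsilon\|\le2\delta\le1$, so $f_\varepsilon\in\mathcal{L}_1$, and $f_\varepsilon(y_i)=2\delta\varepsilon_i$.

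Suppose $\mathcal{L}_1\subseteq_{\delta}Z$ with $\dim Z=k$. Compose with the evaluation map $E:C(X)\to\ell^\infty_m$, $E(f)=(f(y_i))_i$. This map is contractive, and $E(Z)$ is a subspace of dimension at most $k$ that $\delta$-approximates every point $2\delta\varepsilon$ of the discrete cube. Rescaling, a subspace of $\ell^\infty_m$ of dimension $k$ lies within $1/2$ of all vertices $\{\pm1\}^m$. Kerr's argument then shows $k\ge cm$ for some universal constant $c$. One route is a sign-pattern count: a $k$-dimensional subspace meets at most $O(m)^k$ orthant sign patterns, by Warren/Milnor-type bounds, while approximating every vertex within $1/2$ realizes all $2^m$ sign patterns, so $k\log m\gtrsim m$. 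This route only yields $k\gtrsim m/\log m$, which is still enough, since $\log(m/\log m)/\log\delta^{-1}$ has the same $\limsup$ as $\log m/\log\delta^{-1}$.

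Thus $D(\mathcal{L}_1,\delta)\ge N(4\delta,d)/C\log N(4\delta,d)$. Taking $\log$, dividing by $\log\delta^{-1}$ and passing to the $\limsup$ gives $\mathrm{Mdim}\ge\dim_B$. This is exactly Kerr's Proposition 3.9, and only norm-one-bounded elements were used.
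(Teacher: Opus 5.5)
Your proof is correct, but your lower bound takes a genuinely different route from the one the paper relies on. The paper gives no independent argument. It defers to Kerr's Proposition 3.9, noting only that Kerr's proof of $\mathrm{Mdim}\ge\dim_B$ uses norm-one elements (unitaries), so it survives the change to $\mathcal{L}_1=\{L_X\le 1,\ \|\cdot\|\le 1\}$. Your upper bound is the standard Lipschitz partition-of-unity argument (the same device the paper uses for $\Phi_\varepsilon$ in Theorem \ref{CQMS}) and needs no comment.

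For the lower bound, Kerr's construction, which the paper reuses in Theorem \ref{Mdim lower bound} and its dynamic analogue, works as follows. Take the uniform probability measure $\mu$ on a maximal $\delta$-separated set $E$. Build unitaries $u_1,\dots,u_{|E|}\in C(X)$, orthonormal in $L^2(\mu)$, with $L_X(u_j)\le C\delta^{-1}$. Then apply Voiculescu's lemma (Theorem \ref{Voiculescu}) through the contraction $C(X)\to L^2(\mu)$. This gives the linear bound $D(\mathcal{L}_1,\delta/(2C))\ge\frac34|E|$.

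You instead use the real cube $\{\sum_i\varepsilon_ih_i\}$ of disjoint bumps, together with a bound on how many orthants a low-dimensional subspace of $\mathbb{R}^m$ can meet (Schl\"afli/Cover, or Warren for linear forms). This is more elementary: it needs neither unitaries nor a Hilbert-space lemma. It does lose a $\log m$ factor, which is harmless for the dimension, as you note. It is also tied to point evaluations, so it does not transplant to the crossed-product setting, where the paper needs the orthonormal families $\Lambda_\tau(\delta_t u_j)[\delta_e 1]$ in $L^2(\tau)$.

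Two small repairs are needed. First, $Z$ is a complex subspace, so run the sign count on the real subspace $\mathrm{Re}\,E(Z)\subseteq\mathbb{R}^m$. It has real dimension at most $2k$, and real parts still satisfy $|\mathrm{Re}\,z(y_i)-2\delta\varepsilon_i|<\delta$, so only constants change. Second, for $x,x'$ in the supports of $h_i,h_j$ with $i\neq j$, the $1$-Lipschitz property should be justified by $|f_\varepsilon(x)-f_\varepsilon(x')|\le 4\delta-d(x,y_i)-d(x',y_j)\le d(x,x')$. This uses $d(y_i,y_j)\ge 4\delta$ and the triangle inequality.

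Finally, the linear bound $k\ge cm$ that you attribute to Kerr is available inside your own setup. Restrict to $2^{\lfloor\log_2 m\rfloor}$ of the points $y_i$, and apply Voiculescu's lemma to the Walsh sign vectors, which are orthonormal for the uniform measure on those points.
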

Now we recall the bi-Lipschitz equivalence which is the relevant equivalence in the context of CQMS theory.
\begin{defn}
    \label{bi-lipschitz_equivalence}(\cite{Kerr}*{Definition 2.8}) Let $A,B$ be two unital $C^{\ast}$-algebras with \clipnorm\ $L_{A}$, $L_{B}$ respectively. A positive unital linear map $\phi:A\rightarrow B$ is said to be Lipschitz if there is a $\lambda\geq 0$, such that $L_{B}(\phi(a))\leq \lambda L_{A}(a)$ for all $a\in \mathrm{Dom}(L_{A})$. If $\phi$ is invertible and both $\phi,\phi^{-1}$ are Lipschitz then we say $\phi$ is bi-Lipschitz and in that case we say $(A,L_A)$ and $(B,L_B)$ are bi-Lipschitz equivalent.
\end{defn}
The metric dimension is an invariant for bi-Lipschitz equivalence. We recall the following theorem whose proof would be a slight modification of the existing proof because of our different choice of $\mathcal{L}_1$. It follows from the observation that positive unutal maps are $C^{\ast}$-norm contractive.
\begin{thm}
    \label{Metric_inv}(\cite{Kerr}*{Proposition 3.4}) Let $(A,L_A)$ and $(B,L_{B})$ be two bi-Lipschitz equivalent CQMS. Then 
    \begin{displaymath}
        \mathrm{Mdim}_{L_A}(A)=\mathrm{Mdim}_{L_{B}}(B).
    \end{displaymath}
\end{thm}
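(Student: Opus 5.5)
By symmetry it suffices to prove $\mathrm{Mdim}_{L_B}(B)\le \mathrm{Mdim}_{L_A}(A)$ for a bi-Lipschitz $\phi:A\to B$. Applying the same argument to $\phi^{-1}$ then gives the reverse inequality. Let $\lambda\ge 1$ be a constant with $L_A(\phi^{-1}(b))\le \lambda L_B(b)$ for all $b\in\mathrm{Dom}(L_B)$. Since $\phi^{-1}$ is positive and unital, it is $C^{\ast}$-norm contractive (Russo--Dye type fact), and so is $\phi$.

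The first step is to compare the unit balls. Take $b\in\mathcal{L}_1(B)$ and put $a=\phi^{-1}(b)$. Then $\|a\|\le\|b\|\le 1$ and $L_A(a)\le\lambda$. Hence $\lambda^{-1}a$ satisfies $\|\lambda^{-1}a\|\le 1$ and $L_A(\lambda^{-1}a)\le 1$, that is, $\lambda^{-1}a\in\mathcal{L}_1(A)$. This gives
\[
\mathcal{L}_1(B)\subseteq \lambda\,\phi(\mathcal{L}_1(A)).
\]
This is exactly where our choice of $\mathcal{L}_1$, which includes the norm bound, requires the contractivity of $\phi^{-1}$. Scaling by $\lambda\ge1$ keeps the norm bound, because we only scale down elements of $A$.

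The second step is to transport approximating subspaces. Suppose $Z\in\mathcal{F}(A)$ satisfies $\mathcal{L}_1(A)\subseteq_{\delta/\lambda}Z$. Then $\phi(Z)\in\mathcal{F}(B)$ and $\dim\phi(Z)\le\dim Z$. For $b\in\mathcal{L}_1(B)$, write $b=\lambda\phi(a')$ with $a'\in\mathcal{L}_1(A)$, and pick $z\in Z$ with $\|a'-z\|<\delta/\lambda$. Contractivity of $\phi$ then gives $\|b-\lambda\phi(z)\|\le\lambda\|a'-z\|<\delta$, and $\lambda\phi(z)\in\phi(Z)$. Therefore
\[
D(\mathcal{L}_1(B),\delta)\le D(\mathcal{L}_1(A),\delta/\lambda).
\]

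The final step is the limit computation. Set $\varepsilon=\delta/\lambda$. Then
\[
\frac{\log D(\mathcal{L}_1(B),\delta)}{\log\delta^{-1}}\le \frac{\log D(\mathcal{L}_1(A),\varepsilon)}{\log\varepsilon^{-1}}\cdot\frac{\log\varepsilon^{-1}}{\log\varepsilon^{-1}-\log\lambda}.
\]
The last factor tends to $1$ as $\delta\to0^+$. Taking $\limsup$ gives the desired inequality, and the case where either side equals $+\infty$ is handled by the same computation.

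The only nonroutine point is the norm-contractivity of positive unital maps between unital $C^{\ast}$-algebras, which is standard ($\|\phi\|=\|\phi(1)\|$). If one is worried about non-self-adjoint elements, the bound $\|\phi\|\le 2$ for general elements would also suffice. In that case $\lambda$ is replaced by $2\lambda$, the norm bound is absorbed by scaling, and the same limit argument goes through.
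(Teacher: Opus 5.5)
Your argument is correct and follows exactly the route the paper has in mind: the paper gives no separate proof, only noting that Kerr's argument goes through once one observes that positive unital maps are norm-contractive, and that is precisely what you use to get $\mathcal{L}_1(B)\subseteq\lambda\,\phi(\mathcal{L}_1(A))$ and $D(\mathcal{L}_1(B),\delta)\le D(\mathcal{L}_1(A),\delta/\lambda)$ before passing to the $\limsup$. The fallback via $\|\phi\|\le 2$ is unnecessary, since Russo--Dye gives $\|\phi\|=\|\phi(1)\|=1$ on all of $A$; if you did use it, it would also cost an extra factor of $2$ in the approximation step, but this does not affect the proof.
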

\subsection{Transformation groupoid \texorpdfstring{$C^{\ast}$}{C*-}-algebras and their cocycle twists} 
Let $\Gamma$ be a discrete group acting on a compact Hausdorff topological space $X$ by homeomorphisms. Consider the groupoid $\mathcal G : = \Gamma \rtimes X$ with the range map $r$ and the source map $s$ given by $r(t,x) = t \cdot x$ and $s(t,x) = x$ respectively. Then $(t,y), (u,x)$ is a composable pair if and only if $y=u\cdot x$ and in that case the  multiplication is defined by $(t, u \cdot x) (u, x) = ( t u, x)$. The inverse is given by $(t, x)^{-1} = (t^{-1}, t \cdot x)$. With this structure, $\mathcal G$ becomes an étale groupoid, known as the transformation groupoid, with the unit space $\mathcal G^{(0)}=\{e\} \times X : = \{(e,x)\ :\ x \in X\}$ canonically identified with $X$. In this paper, we shall not recall general étale groupoids. We shall stick to the transformation groupoids only. Most of the definitions that follow are adapted from more general étale groupoids to transformation groupoids. In this paper, the group $\Gamma$ in the transformation groupoid $\mathcal{G}$ is assumed to be finitely-generated discrete.
\vspace{2mm}

\begin{defn}(\cite{renault1980groupoid}*{Definition 1.12}, \cite{ARMSTRONG2022109551}*{Definition 2.2})
A $2$-cocycle $\omega$ on a transformation groupoid $\Gamma \rtimes X$ is map $\omega : \Gamma \times \Gamma \times X \rightarrow \mathbb T$ which satisfies the following cocycle condition \begin{displaymath}
\omega (t, u, v \cdot x) \omega (t u, v, x) = \omega (t, u v, x) \omega (u, v, x).\end{displaymath}
A $2$-cocycle $\omega$ is said to be normalized if $\omega (e, t, x) = \omega (t, e, x) = 1,$ for all $t \in \Gamma$ and $x \in X.$
\end{defn}
\begin{rem}
    We shall only consider normalized $2$-cocycles. So from now on a $2$-cocycle will always be normalized unless mentioned otherwise.
\end{rem}
Recall that $2$-cocycle twists on an étale groupoid are in one-one correspondence with twists with respect to the trivial $\mathbb{T}$-bundles in the sense of \cite{Weygandt-2024-Rapid-decay-for-etale-gpd}*{Definition 2.3, Example 2.4}. For a proof of this equivalence, the reader is referred to \cite{ARMSTRONG2022109551}*{Proposition 2.9}. In this paper we shall not consider general twist of a groupoid, rather we shall stick to only $2$-cocycle twist.\vspace{0.1in}\\
Given a transformation groupoid one can associate two $C^{\ast}$-algebras, the full and the reduced $C^{\ast}$-algebras. In this paper, we shall be considering the reduced $C^{\ast}$-algebras. We briefly recall its construction. The reader may refer to \cites{williams2019tool, renault1980groupoid, brownc, Weygandt-2024-Rapid-decay-for-etale-gpd, sims2020operator} for detailed treatment of reduced $C^{\ast}$-algebras of general \'etale groupoids. Given a transformation groupoid equipped with a normalized $2$-cocycle $\omega,$ we can associate its twisted reduced $C^{\ast}$-algebra. For $\mathcal G = \Gamma \rtimes X,$ let $C_c (\mathcal G, \omega)$ be the $\ast$-algebra of compactly supported continuous functions on $\mathcal G$ with the multiplication given by 
\begin{displaymath}(f \ast_{\omega} g) (t, x) = \sum\limits_{u \in \Gamma} f \left (t u^{-1}, u \cdot x \right ) g (u, x) \omega \left (t u^{-1}, u, x \right )\end{displaymath}
and the involution given by
$f^{\ast_{\omega}} (t, x) = \overline {f \left (t^{-1}, t \cdot x \right ) \cdot \omega (t, t^{-1}, t \cdot x)},$
 for $f, g \in C_c (\mathcal G, \omega)$ and $(t, x) \in \mathcal G.$
Consider the function $\delta_e \colon  \mathcal G \to \mathbb C$ defined by 
\begin{equation}\label{eq:0} \delta_e (t, x) = \begin{cases} 1, \quad \text {if}\ t = e, x \in X, \\ 0, \quad \text {otherwise.} \end{cases}\end{equation}
Note that the unit space $X$ of the transformation groupoid $\mathcal G = \Gamma \rtimes X$ is always clopen. Also since $X$ is compact, it follows that $\delta_e \in C_c (\mathcal G, \omega)$. It is straightforward to verify that $f \ast_{\omega} \delta_e = \delta_e \ast_{\omega} f$ for every $f \in C_c (\mathcal G,\omega)$, that is, $\delta_e$ is the multiplicative identity of $C_c (\mathcal G,\omega).$ One defines a $C(X)$-valued inner product $\langle \langle \cdot, \cdot \rangle \rangle$ on $C_c (\mathcal G, \omega)$ by
\begin{displaymath}\langle \langle \xi, \eta \rangle \rangle (x) = \sum\limits_{t \in \Gamma} \overline {\xi (t, x)}\ {\eta (t, x)}\end{displaymath} 
for $\xi, \eta \in C_c (\mathcal G, \omega)$ and $x \in X.$ Let $L^2 (\mathcal G, \omega)$ be the Hilbert-$C (X)$-module obtained by the completion of $C_c (\mathcal G, \omega)$ with respect the norm $\| \cdot \|$ given by
\begin{displaymath}
    \|\xi\| : = \biggr ( \sup\limits_{x \in X}  \sum\limits_{t \in \Gamma} \left \lvert \xi(t, x) \right \rvert^2 \biggr )^{\frac {1} {2}},\end{displaymath}
for $\xi \in C_c (\mathcal G, \omega).$ Consider the representation $\lambda^{\omega} \colon C_c (\mathcal G, \omega) \to \mathbb B (L^2 (\mathcal G, \omega))$ of $C_c (\mathcal G, \omega)$ into the space of adjointable operators on $L^2 (\mathcal G, \omega)$ given by $\lambda^{\omega} (f) (\xi) = f \ast_{\omega} \xi,$ for $\xi \in C_c (\mathcal G, \omega).$ Recall the $I$-norm for an element $f\in C_{c}(\mathcal{G},\omega)$: 
$$\|f\|_I : = \max \biggr \{\sup\limits_{x \in X} \sum\limits_{t \in \Gamma} \left \lvert f (t, x) \right \rvert,\ \sup\limits_{x \in X} \sum\limits_{t \in \Gamma} \left \lvert f \left (t^{-1}, t \cdot x \right ) \right \rvert \biggr \}.$$ 
We define the twisted reduced norm $\|\cdot \|_{\text {red}, \omega}$ on $C_c (\mathcal G,\omega)$ by
\begin{displaymath}\|f\|_{\text {red}, \omega} : = \|\lambda^{\omega} (f) \|_{\text {adj}}.\end{displaymath} 
The completion of $C_c (\mathcal G, \omega)$ with respect to the twisted reduced norm is known as the twisted reduced groupoid $C^{\ast}$-algebra, which is denoted by $C_r^{\ast} (\mathcal G, \omega).$ It is easy to see that $C_c (\mathcal G, \omega)$ can be identified with its image in $L^2 (\mathcal G, \omega).$\\
Now we note an inequality which will be used throughout the paper. The proof of the inequality follows from easy adaptation of the proof of the same inequality for the untwisted case found in \cite{brownc}*{Lemma 5.6.12}. Recall the fiberwise twisted representation $\{\lambda^{\omega}_{x}\}_{x\in X}$ of $C_{c}(\mathcal{G},\omega)$ in $\mathbb{B}(\ell^{2}(\Gamma))$ given by 
\begin{displaymath}
    \lambda^{\omega}_x (f) (\eta) (t) = \sum\limits_{u \in \Gamma} f \left (t u^{-1}, u \cdot x \right ) \eta (u)\ \omega \left (t u^{-1}, u, x \right ),
\end{displaymath}
 for all $f \in C_c (\mathcal G, \omega), \eta \in \ell^{2} (\Gamma)$ and $t \in \Gamma.$ Then denoting $$\|f\|_{\infty} : = \sup\limits_{\substack {t \in \Gamma \\ x \in X}} \left \lvert f (t, x) \right \rvert,$$ for all $f \in C_c (\mathcal G, \omega),$ we have
 \begin{equation}\label{I-n0rm dominance}
     \lvert\lvert f\rvert\rvert_{\infty}\leq \lvert\lvert f\rvert\rvert_{\omega,\mathrm{red}}=\sup\limits_{x\in X}\lvert\lvert\lambda^{\omega}_{x}(f)\rvert\rvert\leq\lvert\lvert f\rvert\rvert_{I}.
 \end{equation}
Before we proceed, we recall the canonical state $\tau$ on $C^{\ast}_{r}(\mathcal{G},\omega)$ given by $\mu\circ\mathbb{E}$, where $\mu$ is some state on $C(X)$ and $\mathbb{E}:C^{\ast}_{r}(\mathcal{G},\omega)\to C(X)$ given on $C_{c}(\mathcal{G},\omega)$ by $\mathbb{E}(\sum\limits_{t}\delta_{t}a_{t})=a_{e}$. The proof of existence of a faithful conditional expectation $\mathbb{E}$ for reduced $C^{\ast}$-algebra of a general twisted Hausdorff locally compact étale groupoid can be found in \cite{renault2008cartan}*{Proposition 4.3}. It is easy to translate the result to our setting. Then the corresponding GNS Hilbert space has $[\delta_{e}1]$ as the cyclic vector. We denote the GNS Hilbert space by $L^{2}(\tau)$ and the representation on the GNS space by $\Lambda_{\tau}$.
\begin{lem}
\label{Voiculescu_orthonormal}
Let $\mathcal G = \Gamma \rtimes X$ be a transformation groupoid obtained from an action of a discrete group $\Gamma$ on a compact Hausdorff space $X$ equipped with a probability measure $\mu.$ Let $F$ be a finite set of $\mu$-orthonormal functions in $C (X).$ Then for any finite subset $\Gamma' \subseteq \Gamma,$ the set $$\Omega :  = \{\Lambda_{\tau} (\delta_t a)[\delta_e 1]\ :\ t \in \Gamma',\ a \in F \} \subseteq L^2(\tau)$$ is a finite set of orthonormal vectors in $L^{2} (\tau).$ In particular, for any finite subset $\Gamma^{\prime}\subset\Gamma$, the set $\{\Lambda_{\tau} (\delta_t 1)[\delta_e 1]\ :\ t \in \Gamma^{\prime}\}$ is an orthonormal set in $L^{2}(\tau)$ for any choice of probability measure $\mu$ on $X$. 
\end{lem}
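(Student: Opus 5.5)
The plan is to compute the GNS inner products directly through the canonical state $\tau=\mu\circ\mathbb{E}$. In the GNS space $L^{2}(\tau)$ one has
\begin{displaymath}
\langle \Lambda_{\tau}(b)[\delta_e 1],\Lambda_{\tau}(c)[\delta_e 1]\rangle=\tau(b^{\ast_{\omega}}\ast_{\omega}c)
\end{displaymath}
for $b,c\in C_c(\mathcal{G},\omega)$, taking the inner product linear in the second variable. The proof therefore reduces to computing
\begin{displaymath}
\mathbb{E}\big((\delta_s a)^{\ast_{\omega}}\ast_{\omega}(\delta_t b)\big)
\end{displaymath}
for $s,t\in\Gamma'$ and $a,b\in F$. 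Here $\delta_t a$ denotes the function $(u,x)\mapsto \delta_{u,t}\,a(x)$, the element supported on $\{t\}\times X$; I would fix this convention at the start so that the convolution formula can be applied literally.

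\textbf{Step 1 (adjoint).} From the involution formula,
\begin{displaymath}
(\delta_s a)^{\ast_{\omega}}(u,x)=\overline{\delta_s a(u^{-1},u\cdot x)\,\omega(u,u^{-1},u\cdot x)}.
\end{displaymath}
This is supported at $u=s^{-1}$, where it equals $\overline{a(s^{-1}\cdot x)\,\omega(s^{-1},s,s^{-1}\cdot x)}$. Only the support information and the fact that the values lie in $\overline{a}\cdot\mathbb{T}$ matter below.

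\textbf{Step 2 (product).} In $(f\ast_\omega g)(r,x)=\sum_u f(ru^{-1},u\cdot x)\,g(u,x)\,\omega(ru^{-1},u,x)$ with $g=\delta_t b$, only $u=t$ survives. The factor $f(rt^{-1},t\cdot x)$ is nonzero only when $rt^{-1}=s^{-1}$, that is, $r=s^{-1}t$. So the product is supported on $\{s^{-1}t\}\times X$, and $\mathbb{E}$, which extracts the coefficient at $e$, vanishes unless $s=t$.

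\textbf{Step 3 (diagonal case $s=t$).} Here $r=e$, and the value at $(e,x)$ is
\begin{displaymath}
\overline{a(x)}\;\overline{\omega(t^{-1},t,x)}\;b(x)\;\omega(t^{-1},t,x)=\overline{a(x)}\,b(x),
\end{displaymath}
since the cocycle factors are unimodular and cancel. It helps to evaluate $f=(\delta_t a)^{\ast_\omega}$ at the point $(t^{-1},t\cdot x)$: the ``$s^{-1}\cdot x$'' argument then becomes $t^{-1}\cdot t\cdot x=x$, which is what makes the cocycle arguments match exactly. Hence $\tau(\cdots)=\int\overline{a}\,b\,d\mu=\delta_{a,b}$ by $\mu$-orthonormality of $F$. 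Combined with Step 2, the inner product equals $\delta_{s,t}\delta_{a,b}$, so the vectors are orthonormal. In particular they are pairwise distinct, and $\Omega$ is a finite set of cardinality $|\Gamma'|\,|F|$.

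The main obstacle is the bookkeeping in Step 3: checking that the two $\omega$ factors carry the same arguments, $(t^{-1},t,x)$, so that they cancel. Normalization of $\omega$ is not even needed for this; unimodularity suffices.

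For the ``in particular'' statement, take $F=\{1\}$. The constant function $1$ is $\mu$-orthonormal for every probability measure $\mu$, since $\int|1|^2\,d\mu=1$, so the first part applies.
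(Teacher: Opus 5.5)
Your proof is correct and follows the same route as the paper. You write each GNS inner product as $\tau\bigl((\delta_s a)^{\ast_\omega}\ast_\omega(\delta_t b)\bigr)$ and note that the product is supported on $\{s^{-1}t\}\times X$, so $\mathbb{E}$ kills it unless $s=t$; in the diagonal case the unimodular cocycle factors cancel, leaving $\mu(\overline{a}\,b)=\delta_{a,b}$. Your explicit check that both cocycle factors carry the arguments $(t^{-1},t,x)$ is, if anything, more careful than the paper's displayed intermediate formula.
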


\begin{proof}
For any two pairs $\left (t_1, a_1 \right )$ and $\left (t_2, a_2 \right )$ in $\Gamma' \times F,$ we have
\Bea
\langle \Lambda_\tau(\delta_{t_1} a_{1})[\delta_e 1], \Lambda_\tau(\delta_{t_2} a_{2})[\delta_e 1] \rangle_{2, \tau} 
& = & \tau \left( (\delta_{t_1} a_{1})^{\ast_{\omega}} \ast_{\omega} \left (\delta_{t_2} a_{2} \right ) \right) \\
& = & \mu \circ \mathbb{E} \left(\delta_{t_1^{-1}} \left (\alpha_{t_1} \left (\overline {a_{1}} \right ) \overline {\omega \left (t_1, t_1^{-1}, \cdot \right )} \right ) \ast_{\omega} \left (\delta_{t_2} a_{2} \right ) \right ) \\ & = & \mu \circ \mathbb {E} \left (\delta_{t_1^{-1} t_2} \left( \alpha_{t_2^{-1} t_1}(\overline{a_{1}})\ a_{2}\ \overline{\omega(t_1, t_1^{-1} t_2, \cdot)} \right) \right )
\Eea
The conditional expectation vanishes identically when $t_1 \neq t_2$. If $t_1 = t_2$, the expression reduces to $\mu(\overline {a_{1}} a_{2})$. By the $\mu$-orthonormality of the elements of $F,$ this evaluates to $0$ if $a_1 \neq a_2$ and $1$ if $a_1 = a_2.$ This completes the proof.
\end{proof}

\subsection{Polynomial growth and twisted rapid decay property} We shall recall the rapid decay property of twisted reduced transformation groupoid $C^{\ast}$-algebras. Again this will be adaptation of the same for more general \'etale groupoid case and the reader is referred to \cite{Weygandt-2024-Rapid-decay-for-etale-gpd} for details.

\begin{defn}
Let $\mathcal G : = \Gamma \rtimes X$ be a transformation groupoid associated to an action of a discrete group $\Gamma$ on a compact Hausdorff space $X.$ By a length function on $\mathcal G,$ we mean a map $\ell : \mathcal G : \longrightarrow [0, +\infty)$ satisfying the following conditions $:$

\vspace{2mm}

$(1)$ $\ell (e, x) = 0$ for any $x \in X,$ i.e. $\ell$ vanishes exactly on the unit space of $\mathcal G,$

$(2)$ $\ell \left (t^{-1}, t \cdot x \right ) = \ell (t, x)$ for any $(t,x) \in \mathcal G,$ i.e. $\ell$ is inverse invariant, and,

$(3)$ $\ell (t u, u \cdot x) \leq \ell (t, u \cdot x) + \ell (u, x)$ for any $t, u \in \Gamma$ and $x \in X,$ i.e. $\ell$ is finitely subadditive on the space of composable pairs of $\mathcal G.$

\vspace{2mm}

We say that $\ell$ is continuous if it is continuous as a map from $\mathcal G$ to $[0, \infty).$ The length function \(\ell\) is called proper if for every subset $K \subseteq \mathcal G \setminus \left (\{e\} \times X \right ),$ finiteness of the quantity $\sup \{\ell (t, x) : (t, x) \in K\}$ implies that $K$ is pre-compact.

\end{defn}

\begin{rem}
We shall be only considering continuous length functions in this paper. Henceforth all the length functions that will be dealt with is assumed to be continuous unless specified explicitly.
\end{rem}

\begin{defn}
A transformation groupoid $\mathcal G : = \Gamma \rtimes X$ equipped with a length function $\ell$ is said to have the property of polynomial growth with growth exponent $r \geq 1$ with respect to $\ell$ if there exists a constant $C > 0$ such that for each $n \geq 0,$ we have 
$$\sup\limits_{x \in X} \left \lvert B_x (n) \right \rvert \leq C (1 + n)^{r},$$
where $B_x (n) : = \left \{t \in \Gamma\ :\ \ell (t, x) \leq n \right \}.$
\end{defn}
\begin{rem}
Many authors prefer to involve both the source ball $B_{\mathcal G_u} (n)$ and the range ball $B_{\mathcal G^{u}} (n)$ (for $u \in \mathcal G^{(0)}$) while defining the property of polynomial growth for a general \'etale groupoid as in \cite{austad2024polynomial}*{Definition 3.11}, however there is a one to one correspondence between $B_{\mathcal G_u} (n)$ and $B_{\mathcal G^{u}} (n)$ for any $n \geq 0,$ given by $x \mapsto x^{-1},$ as the length function is inverse invariant. Note that the polynomial growth of any group length function gives rise to the polynomial growth of the base independent induced length function on the transformation groupoid with the same growth exponent.
\end{rem}

\begin{lem} \label{induced length}
Let $\ell$ be a length function on the transformation groupoid $\mathcal{G} := \Gamma \rtimes X$, where $\Gamma$ is a discrete group acting on a compact Hausdorff space $X.$ Define a map $\ell_{\Gamma} \colon \Gamma \longrightarrow [0, \infty)$ by 
\[ \ell_{\Gamma}(t) := \sup_{x \in X} \ell(t, x). \]
Then $\ell_{\Gamma}$ is a length function on $\Gamma$. Moreover, if $\ell$ is proper, then so is $\ell_{\Gamma}$ and if $\mathcal{G}$ has polynomial growth with respect to $\ell$, $\Gamma$ has polynomial growth with respect to $\ell_{\Gamma}$ with the same exponent.
\end{lem}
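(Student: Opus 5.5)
The plan is to check the axioms of a length function on $\Gamma$ directly from the groupoid axioms (1)--(3), then deduce properness and polynomial growth from one containment of balls.

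\emph{Finiteness.} First I will check that $\ell_{\Gamma}$ is finite-valued. For fixed $t\in\Gamma$, the map $x\mapsto \ell(t,x)$ is the restriction of the continuous $\ell$ to the slice $\{t\}\times X$. This slice is homeomorphic to $X$ because $\Gamma$ is discrete. Since $X$ is compact, the supremum is attained and finite. I expect this to be the only point that genuinely uses the standing assumption that $\ell$ is continuous.

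\emph{Length function axioms.} Next I verify the axioms for $\ell_{\Gamma}$.
\begin{itemize}
\item $\ell_{\Gamma}(e)=\sup_x \ell(e,x)=0$ by (1). If $t\neq e$, then $(t,x)$ lies outside the unit space, so $\ell(t,x)>0$ by (1) (``vanishes exactly on the unit space''), and hence $\ell_{\Gamma}(t)>0$.
\item For inverse invariance, note that $x\mapsto t\cdot x$ is a bijection of $X$. Using (2),
\[
\ell_{\Gamma}(t^{-1})=\sup_{x}\ell(t^{-1},x)=\sup_{y}\ell(t^{-1},t\cdot y)=\sup_y\ell(t,y)=\ell_{\Gamma}(t).
\]
\item For subadditivity, use the composable pair $(t,u\cdot x)(u,x)=(tu,x)$ and read (3) as subadditivity along this composition. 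This gives
\[
\ell(tu,x)\le \ell(t,u\cdot x)+\ell(u,x)\le \ell_{\Gamma}(t)+\ell_{\Gamma}(u)
\]
for every $x$. Taking the supremum over $x$ yields $\ell_{\Gamma}(tu)\le\ell_{\Gamma}(t)+\ell_{\Gamma}(u)$.
\end{itemize}

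\emph{Properness and growth.} Both follow from the containment
\[
B_{\Gamma}(n):=\{t:\ell_{\Gamma}(t)\le n\}\subseteq B_x(n)\quad\text{for every }x\in X,
\]
which holds because $\ell(t,x)\le \ell_{\Gamma}(t)$.

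For properness, suppose $\ell$ is proper and set $K=(B_{\Gamma}(n)\setminus\{e\})\times X\subseteq \mathcal G\setminus(\{e\}\times X)$. Since $\sup_K\ell\le n$, the set $K$ is precompact in $\mathcal G=\Gamma\times X$. As $\Gamma$ is discrete, a precompact set meets only finitely many slices $\{t\}\times X$. Hence $B_{\Gamma}(n)$ is finite, which is the meaning of properness for a length function on a discrete group.

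For polynomial growth, the containment gives
\[
|B_{\Gamma}(n)|\le \sup_x|B_x(n)|\le C(1+n)^r,
\]
with the same constant $C$ and the same exponent $r$.
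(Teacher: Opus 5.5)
Your proof is correct and takes the same route as the paper. Properness comes from precompactness of a sublevel set of $\ell$ in $\Gamma\times X$ together with discreteness of $\Gamma$: the paper projects $\overline{K_R}$ onto $\Gamma$, while you count the slices $\{t\}\times X$ that the set meets. Polynomial growth comes from $B_{\Gamma}(n)\subseteq B_x(n)$. You also supply details the paper leaves to the reader, namely finiteness of $\ell_{\Gamma}$ via continuity and compactness, and the length-function axioms. Finally, your choice $K=(B_{\Gamma}(n)\setminus\{e\})\times X$ matches the stated definition of properness (subsets of $\mathcal G\setminus(\{e\}\times X)$) slightly more literally than the paper's $K_R$.
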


\begin{proof}
The fact that $\ell_{\Gamma}$ is a length function on $\Gamma$ is straightforward and the proof of this fact is left to the reader.

\vspace{0.5mm}

We prove that if $\ell$ is proper then so is $\ell_{\Gamma}.$ Since $\ell$ is a proper length function on the groupoid $\Gamma \times X$, for any $R > 0$, the set 
$$K_R = \{ (t, x) \in \Gamma \times X \mid \ell(t, x) \le R \}$$
is pre-compact in the product topology which implies that its closure $\overline{K_R}$ is compact. We need to show that $\ell_{\Gamma}$ is proper on $\Gamma.$ In other words we need to show that the set 
\[ S_R = \{ t \in \Gamma \mid \ell_{\Gamma}(t) \le R \} \]
is finite for every $R > 0.$

\vspace{2mm}

Let $p_1 \colon \Gamma \times X \to \Gamma$ be the continuous projection map onto the first factor. Since continuous maps preserve compactness, $p_1(\overline{K_R})$ is a compact subset of $\Gamma$. Since $\Gamma$ is discrete, the compact set $p_1(\overline{K_R})$ has to be finite. But then $p_1(K_R)$ is finite as well.
Let $t \in S_R.$ By definition, $\ell_{\Gamma}(t) \le R,$ which implies that $$\sup_{x \in X} \ell(t, x) \le R.$$ Therefore, $\ell(t, x) \le R$ for all $x \in X$. Fixing an arbitrary $x_0 \in X$, we have $\ell(t, x_0) \le R.$ This ensures that $(t, x_0) \in K_R.$
Applying the projection map we obtain
$$t = p_1(t, x_0) \in p_1(K_R).$$
This shows that $S_R \subseteq p_1(K_R).$ Since $p_1(K_R)$ is a finite set, $S_R$ must also be finite. This shows that $\ell_{\Gamma}$ is a proper length function on $\Gamma,$ as claimed. The claim about the polynomial growth property essentially follows from the definition.
\end{proof}
\begin{rem}
    The last lemma shows that if $\mathcal{G}=\Gamma\rtimes X$ has polynomial growth property with respect to some length function $\ell$, then $\Gamma$ has polynomial growth property with respect to the length function $\ell_{\Gamma}$. Therefore, in that case, the group $\Gamma$ cannot have exponential growth with respect to any word length function. Unlike the property of polynomial growth, the property of rapid deacy for the group length function does not necessarily lift to the property of rapid decay of the fiber independent induced length function on the transformation groupoid if the underlying group has exponential growth. For further details, readers may refer to \cite{Weygandt-2024-Rapid-decay-for-etale-gpd}*{Theorem 3.2}.
\end{rem}
\begin{lem} \label{length-Lip}
Let $\Gamma$ be a discrete group acting on a compact metric space $(X, d)$ with finite diameter $D = \sup\limits_{x,y \in X} d(x,y).$ Let $\Gamma \rtimes X$ be the associated transformation groupoid. Suppose $\ell : \Gamma \rtimes X \to \mathbb{R}_{\ge 0}$ is a length function satisfying the condition
\begin{equation} \label{Lip-length}
    |\ell(t, x) - \ell(t, y)| \le C d(x, y)
\end{equation}
for some constant $C > 0$ (independent of $t \in \Gamma$) and all $t \in \Gamma,$ $x, y \in X$. Recall the length function $\ell_{\Gamma}$ on $\Gamma$ by $\ell_{\Gamma}(t) : = \sup\limits_{x \in X} \ell(t, x)$. Fix $n_0 \in \mathbb{N}$. If $m_0$ is the smallest positive integer such that the implication $\ell_{\Gamma}(t) > m_0 \implies \ell(t, x) > n_0$ holds for all $x \in X$, then $m_0 \le \lceil n_0 + CD \rceil$.
\end{lem}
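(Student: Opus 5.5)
The plan is to show that the integer $m_1 := \lceil n_0 + CD \rceil$ already satisfies the implication. Since $m_0$ is by definition the smallest positive integer with this property, that gives $m_0 \le m_1$ at once. So everything reduces to proving, for every $t\in\Gamma$ and every $x \in X$,
\[
\ell_\Gamma(t) > m_1 \implies \ell(t,x) > n_0 .
\]

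Fix $t$ with $\ell_\Gamma(t) > m_1$. By the definition of $\ell_\Gamma(t)=\sup_{y\in X}\ell(t,y)$ there is a point $y \in X$ with $\ell(t,y) > m_1$. Alternatively, we may take $y$ to be a maximizer: $\ell(t,\cdot)$ is continuous on the compact space $X$, since by \eqref{Lip-length} it is even Lipschitz. Now let $x \in X$ be arbitrary. Condition \eqref{Lip-length} together with $d(x,y)\le D$ gives
\[
\ell(t,x) \ge \ell(t,y) - C\,d(x,y) \ge \ell(t,y) - CD > m_1 - CD \ge n_0 + CD - CD = n_0 .
\]
This is exactly the required implication, and it holds uniformly in $x$.

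It remains to note that $m_1$ is a positive integer. It is an integer by construction, and $m_1 \ge n_0 \ge 1$ because $C>0$ and $D \ge 0$. Hence $m_1$ belongs to the set of admissible positive integers, and therefore $m_0 \le m_1 = \lceil n_0 + CD\rceil$.

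There is no real obstacle. The one point needing care is that the strict inequality $\ell_\Gamma(t) > m_1$ must pass to some point $y$. The definition of the supremum supplies such a $y$ directly, so the argument does not depend on the supremum being attained.
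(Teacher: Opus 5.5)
Your proof is correct and is essentially the paper's argument. The paper phrases it by contraposition: any $t$ with $\ell(t,x_t)\le n_0$ for some $x_t$ satisfies $\ell_\Gamma(t)\le n_0+CD$, by the same Lipschitz-plus-diameter estimate, whereas you check directly that $\lceil n_0+CD\rceil$ satisfies the implication (and you make explicit, via $n_0\ge 1$, that this is a positive integer, which the paper leaves implicit).
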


\begin{proof}
We proceed by contraposition. Consider the subset $F_{n_0}$ in $\Gamma$ defined by
\begin{equation*}
    F_{n_0} : = \{ t \in \Gamma\ :\ \exists x \in X \text{ such that } \ell(t, x) \le n_0 \}.
\end{equation*}
By the definition of $m_0,$ it follows that $m_0$ is the least positive integer such that $\ell_{\Gamma}(t) \le m_0$ for all $t \in F_{n_0}.$

\vspace{2mm}

Fix an arbitrary $t \in F_{n_0}$. By the definition of the set $F_{n_0}$, there exists a point $x_t \in X$ satisfying $\ell(t, x_t) \le n_0.$ Thus by invoking the given condition we have
\begin{equation*}
    \ell(t, y) - \ell(t, x_t) \le |\ell(t, y) - \ell(t, x_t)| \le C d(y, x_t),
\end{equation*}
for all $y \in X.$
Since the metric space $X$ has diameter $D$, we have $d(y, x_t) \le D.$ In other words,
\begin{equation*}
    \ell(t, y) \leq \ell(t, x_t) + CD \le n_0 + CD.
\end{equation*}
As this inequality holds for all $y \in X,$ we must have
\begin{equation*}
    \ell_{\Gamma}(t) = \sup_{y \in X} \ell(t, y) \le n_0 + CD.
\end{equation*}
Since this upper bound is uniform over all $t \in F_{n_0},$ the supremum of $\ell_{\Gamma}(t)$ over $F_{n_0}$ is bounded by $n_0 + CD.$ Therefore, by the minimality of $m_0$ it follows that
\begin{equation*}
    m_0 \leq \left \lceil n_0 + CD \right \rceil.
\end{equation*}
This completes the proof.
\end{proof}
\begin{rem}\label{grouplengthtogroupoidlength}
    An example of a length function on $\Gamma\rtimes X$ that satisfies Condition \ref{Lip-length} is given by $\ell(t,x):=\ell_{\Gamma}(t)$ for all $(t,x)\in\Gamma\times X$ where $\ell_{\Gamma}$ is some length function on the group $\Gamma$. Also it is easy to see that if $\Gamma$ has polynomial growth with respect to $\ell_{\Gamma}$, then $\Gamma\rtimes X$ has polynomial growth with respect to $\ell$ with the same growth exponent.
\end{rem}
Now we shall define the property of rapid decay for reduced $C^{\ast}$-algebras of transformation groupoids with respect to a length function. For general groups, the study of this property was initiated by P. Jolissaint (\cite{MR943303}) and subsequently developed by I. Chatterjee (\cites{ChatterjiRuane2005, Chatterji2016}). The rapid decay property for reduced $C^{\ast}$-algebras of general \'etale groupoids was generalized by Alex Weygandt in the twisted setting (\cite{Weygandt-2024-Rapid-decay-for-etale-gpd}) and subsequently extended to the framework of Fell bundles over \'etale groupoids by A. Buss and P. Karmakar (\cite{buss2026rapid}).
\begin{defn}
\label{rapid decay}
The transformation groupoid $\mathcal G : = \Gamma \rtimes X$ is said to have the property of twisted rapid decay with decay exponent $p > 0$ with respect to $\ell$ and the normalized $2$-cocycle $\omega : \Gamma \times \Gamma \times X \rightarrow \mathbb T$ if there exists a constant $C > 0$ such that for all $f \in C_c (\mathcal G, \omega),$ we have
$$\|f\|_{\mathrm {red}, \omega} \leq C \|f\|_{2, p, \ell},$$
where
$$\|f\|_{2, p, \ell} : = \max \left \{\|f\|_{2, p, s, \ell}, \|f\|_{2, p, r, \ell} \right \},$$ with
$$\|f\|_{2,p,s,\ell} : = \sup\limits_{x \in X} \left (\sum\limits_{t \in \Gamma} \left \lvert f(t, x) \right \rvert^2 (1 + \ell (t, x))^{2 p} \right )^{\frac {1} {2}},$$ 
and,
$$\|f\|_{2,p,r, \ell} : = \sup\limits_{x \in X} \left (\sum\limits_{t \in \Gamma} \left \lvert f \left (t^{-1}, t \cdot x \right ) \right \rvert^2 (1 + \ell (t, x))^{2 p} \right )^{\frac {1} {2}}.$$
\end{defn}
 
\begin{lem}
\label{polytorapid}
Let $\mathcal G : = \Gamma \rtimes X$ be the transformation groupoid associated to an action of discrete group $\Gamma$ on a compact Hausdorff space $X,$ equipped with a $2$-cocycle $\omega : \Gamma \times \Gamma \times X \rightarrow \mathbb T.$ Let $\ell : \mathcal G \rightarrow [0, \infty)$ be a continuous proper length function on $\mathcal G$ with respect to which $\mathcal G$ has the property of polynomial growth with growth exponent $r \geq 1.$ Then $\mathcal G$ has the property of twisted rapid decay with respect to $\ell$ with decay exponent $p > \frac {r} {2}.$
\end{lem}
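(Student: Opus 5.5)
The plan is to reduce everything to the $I$-norm bound \eqref{I-n0rm dominance}, $\|f\|_{\mathrm{red},\omega}\leq\|f\|_I$, and then control each half of $\|f\|_I$ by Cauchy--Schwarz against the weight $(1+\ell)^{p}$, using polynomial growth to make the weight's inverse square-summable uniformly in $x$. The cocycle plays no role beyond \eqref{I-n0rm dominance}, since $|\omega|=1$; so the twisted case is no harder than the untwisted one.

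\emph{Step 1: the source half.} Fix $x\in X$ and write
\[
\sum_{t}|f(t,x)|=\sum_t |f(t,x)|(1+\ell(t,x))^{p}\,(1+\ell(t,x))^{-p}.
\]
Cauchy--Schwarz bounds this by $\|f\|_{2,p,s,\ell}\,\bigl(\sum_t(1+\ell(t,x))^{-2p}\bigr)^{1/2}$.

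\emph{Step 2: summability of the weight.} Decompose $\Gamma$ into the shells $S_x(n)=\{t: n-1<\ell(t,x)\leq n\}$ for $n\geq 0$. Each shell sits inside $B_x(n)$, so polynomial growth gives $|S_x(n)|\leq C(1+n)^r$. Hence
\[
\sum_t(1+\ell(t,x))^{-2p}\leq \sum_{n\geq0}C(1+n)^{r}\,n^{-2p},
\]
with the $n=0$ term read as $C$ (there the weight is at least $1$). To keep the exponents clean, I would bound $(1+\ell)^{-2p}\leq \max(1,n)^{-2p}$ on $S_x(n)$. The right-hand side is dominated by a constant times $\sum_n (1+n)^{r-2p}$. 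This is \emph{not} finite merely for $p>r/2$ (that only gives exponent $>-1$), so the naive shell count needs refinement. This is the main obstacle.

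\emph{Resolving the obstacle.} I would use summation by parts (Abel summation) with the cumulative counts $|B_x(n)|$ in place of the crude shell bound. Writing $w(n)=(1+n)^{-2p}$, which is decreasing, we get
\[
\sum_t(1+\ell(t,x))^{-2p}\leq\sum_{n\geq0}|B_x(n)|\,\bigl(w(n-1)-w(n)\bigr),
\]
with $w(-1)$ interpreted suitably. Since $w(n-1)-w(n)\lesssim (1+n)^{-2p-1}$, the series is bounded by $C'\sum_n(1+n)^{r-2p-1}$. This converges exactly when $2p>r$, giving a bound $K_p$ independent of $x$.

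\emph{Step 3: the range half.} Repeat the argument with $f(t^{-1},t\cdot x)$ and the same weight $(1+\ell(t,x))^{p}$, which is precisely how $\|f\|_{2,p,r,\ell}$ is defined, so the same $K_p$ works. Taking the maximum and the supremum over $x$ then yields
\[
\|f\|_{\mathrm{red},\omega}\leq\|f\|_I\leq K_p^{1/2}\|f\|_{2,p,\ell}.
\]

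Continuity and properness of $\ell$ serve only to make the sets $B_x(n)$ finite. Everything else is routine. The one point needing care is the Abel summation, specifically handling the boundary term and the $n=0$ shell.
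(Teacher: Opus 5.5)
Your proposal is correct and is essentially the paper's own proof. Both reduce to $\|f\|_{\mathrm{red},\omega}\leq\|f\|_I$, apply Cauchy--Schwarz against the weight $(1+\ell(t,x))^{p}$, and bound $\sum_t(1+\ell(t,x))^{-2p}$ uniformly in $x$ via the shells $n-1<\ell(t,x)\leq n$ and Abel summation with $n^{-2p}-(n+1)^{-2p}\leq 2p\,n^{-2p-1}$ and $|B_x(n)|\leq C(1+n)^r$, which converges exactly when $2p>r$; the range half is identical, and interpreting $w(-1)$ as $1$ for the $n=0$ shell (where the paper implicitly uses $|B_x(0)|=1$) handles the boundary correctly.
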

There is already a version of the above lemma for finitely generated untwisted discrete groups in \cite{nica2010degree}*{Proposition 2.2}. The main idea of this proof is to exploit the fact that the groups of polynomial growth are amenable and that any amenable group satisfies rapid decay like property if and only if the growth $\gamma$ of the group $\Gamma$ satisfies $\gamma (n) \leq C n^{2 s},$ for some constant $C > 0$ (independent of $n \in \mathbb N$). However, for the transformation groupoid $C^{\ast}$-algebra, this fact can be directly proved by exploiting the pointwise picture of the transformation groupoid and the $I$-norm dominance of the reduced norm, which we already have in the twisted case of Inequality \ref{I-n0rm dominance}. We decide to keep the proof.
\begin{proof}
First note that for any $f \in C_c (\mathcal G, \omega),$ $$\|f\|_{\text{red}, \omega} \leq \|f\|_{I},$$ 
where $$\|f\|_I : = \max \biggr \{\sup\limits_{x \in X} \sum\limits_{t \in \Gamma} \left \lvert f (t, x) \right \rvert,\ \sup\limits_{x \in X} \sum\limits_{t \in \Gamma} \left \lvert f \left (t^{-1}, t \cdot x \right ) \right \rvert \biggr \}.$$ 
Thus, in order to show the property of rapid decay, we need to show that for all $f \in C_c (\mathcal G, \omega),$ $$\|f\|_{I} \leq C' \|f\|_{2, p, \ell},$$ for all $p > \frac {r} {2}$ and for some constant $C' > 0.$ Since $\mathcal G$ has the property of polynomial growth with growth exponent $r \geq 1,$ there exists $C > 0$ such that for all $n \geq 0,$ we have
$$\sup\limits_{x \in X} \left \lvert B_x (n) \right \rvert \leq C (1 + n)^{r}.$$ Now fix some $x_0 \in X$ and $p > \frac {r} {2}.$ Then we have 
\Bea \sum\limits_{t \in \Gamma} \left\lvert f (t, x_0) \right \rvert & = & \sum\limits_{t \in \Gamma} \left \lvert f (t, x_0) \right \rvert (1 + \ell (t, x_0))^{p} (1 + \ell (t, x_0))^{-p} \\ & \leq & \left (\sum\limits_{t \in \Gamma} \left \lvert f (t, x_0) \right \rvert^{2} (1 + \ell (t, x_0))^{2 p} \right )^{\frac {1} {2}} \left (\sum\limits_{t \in \Gamma} (1 + \ell (t, x_0))^{- 2 p} \right )^{\frac {1} {2}}
\Eea
Now
\Bea
\sum\limits_{t \in \Gamma} (1 + \ell (t, x_0))^{- 2 p} & = & 1 + \sum\limits_{t \in \Gamma \setminus \{e\}} (1 + \ell (t, x_0))^{- 2 p} \\ & = & 1 + \sum\limits_{n = 1}^{\infty} \sum\limits_{n - 1 < \ell (t, x_0) \leq n} (1 + \ell (t, x_0))^{- 2 p} \\ & \leq & 1 + \sum\limits_{n = 1}^{\infty} n^{-2 p} \left (\left \lvert B_{x_0} (n) \right \rvert - \left \lvert B_{x_0} (n - 1) \right \rvert \right ) 
\Eea
Now note that for any $k \in \mathbb N$ 
\Bea
\sum\limits_{n = 1}^{k} n^{-2 p} \left (\left \lvert B_{x_0} (n) \right \rvert - \left \lvert B_{x_0} (n - 1) \right \rvert \right ) & = & k^{-2 p} \left \lvert B_{x_0} (k) \right \rvert - 1 + \sum\limits_{n = 1}^{k - 1} \left (n^{-2 p} - (n + 1)^{-2 p} \right ) \left \lvert B_{x_0} (n) \right \rvert 
\Eea
Applying mean value theorem we can get hold of some $x_n \in (n, n + 1)$ for each $n \in \mathbb N$ such that $$n^{- 2 p} - (n + 1)^{- 2 p} = \frac {2 p} {x_n^{2 p + 1}} \leq \frac {2 p} {n^{2 p + 1}}.$$
Thus invoking polynomial growth we find that
\Bea
\sum\limits_{n = 1}^{k} n^{-2 p} \left (\left \lvert B_{x_0} (n) \right \rvert - \left \lvert B_{x_0} (n - 1) \right \rvert \right ) & \leq &  C k^{-2 p} (1 + k)^{r} - 1 + 2 p C \sum\limits_{n = 1}^{k - 1} \frac {(1 + n)^{r}} {n^{2 p + 1}} \\ & \leq & 2^{r} C k^{r - 2 p} - 1 + 2^{r + 1} p C \sum\limits_{n = 1}^{k - 1} \frac {1} {n^{2p - r + 1}}. 
\Eea
Since $p > \frac {r} {2},$ the infinite series $\sum\limits_{n = 1}^{\infty} \frac {1} {n^{2 p - r + 1}}$ converges and $k^{r - 2 p} \to 0$ as $k \to \infty.$ Now since the partial sums on the left are monotonically increasing, we can let $k \to \infty$ to obtain
\Bea
\sum\limits_{n = 1}^{\infty} n^{-2 p} \left (\left \lvert B_{x_0} (n) \right \rvert - \left \lvert B_{x_0} (n - 1) \right \rvert \right ) \leq 2^{r + 1} p C \sum\limits_{n = 1}^{\infty} \frac {1} {n^{2p - r + 1}} - 1 < \infty,
\Eea
for all $p > \frac {r} {2}.$ Therefore
\Bea
\sum\limits_{t \in \Gamma} (1 + \ell (t, x_0))^{- 2 p} & \leq & 2^{r + 1} p C \sum\limits_{n = 1}^{\infty} \frac {1} {n^{2p - r + 1}}.
\Eea
This shows that 
$$\sum\limits_{t \in \Gamma} \left \lvert f (t, x_0) \right \rvert \leq C' \left (\sum\limits_{t \in \Gamma} \left \lvert f (t, x_0) \right \rvert^{2} (1 + \ell (t, x_0))^{2 p} \right )^{\frac {1} {2}},$$
where $C' : = \sqrt {2^{r + 1} p C \sum\limits_{n = 1}^{\infty} \frac {1} {n^{2p - r + 1}}} < \infty.$
Since $x_0 \in X$ was arbitrary, it follows that
$$\sup\limits_{x \in X} \sum\limits_{t \in \Gamma} \left \lvert f (t, x) \right \rvert \leq C' \sup\limits_{x \in X} \left (\sum\limits_{t \in \Gamma} \left \lvert f (t, x) \right \rvert^{2} (1 + \ell (t, x))^{2 p} \right )^{\frac {1} {2}}.$$
By similar argument we can show that
$$\sup\limits_{x \in X} \sum\limits_{t \in \Gamma} \left \lvert f \left (t^{-1}, t \cdot x \right ) \right \rvert \leq C' \sup\limits_{x \in X} \left (\sum\limits_{t \in \Gamma} \left \lvert f \left (t^{-1}, t \cdot x \right ) \right \rvert^{2} (1 + \ell (t, x))^{2 p} \right )^{\frac {1} {2}}.$$
This shows that $\|f\|_{I} \leq C' \|f\|_{2, p, \ell},$ as required.

\end{proof}

\vspace{2mm}


\section{Compact quantum metric space structure on \texorpdfstring{$C^{\ast}$}{C*-}-algebra of twisted transformation groupoid} Let $\mathcal{G}=\Gamma\rtimes X$ be a transformation groupoid with a $2$-cocycle $\omega$. Let $\mathcal{G}$ admit a continuous proper length function $\ell$ such that $\mathcal{G}$ has $\omega$-twisted rapid decay property. Then we consider a class of CQMS structures on the twisted groupoid $C^{\ast}$-algebra $C^{\ast}_{r}(\mathcal{G},\omega)$ coming from stratified \clipnorms\ in the sense of \cite{austad2026quantum}*{Definition 3.7}. The class of CQMS structures is essentially borrowed from \cite{austad2026quantum}*{Theorem 3.14, Proposition 3.17}, where it is done for a general \`etale groupoid $C^{\ast}$-algebra. For a transformation groupoid $C^{\ast}$-algebra and its co-cycle twist the proof simplifies. First let us explain the class of stratified \clipnorms. To that end, We let $M_{\ell}$ denote the (possibly unbounded) operator on $L^2 (\mathcal G, \omega)$ of pointwise multiplication by $\ell.$ The following calculations have been done for the untwisted transformation groupoid $C^{\ast}$-algebras in \cite{joardar2025metrics}. The twisted case is an easy adaptation of the calculations done in \cite{joardar2025metrics}. We define the derivation by Dirac operator $D = M_{\ell}$ as follows: 
$$\Delta_{\omega} (f) = \left [D, \lambda^{\omega} (f) \right ] = \left [M_{\ell}, \lambda^{\omega} (f) \right ],$$ for all $f \in C_c (\mathcal G, \omega).$ For $k \in \mathbb N$ we define $\Delta^k (a)$ inductively by 
$$\Delta_{\omega}^k (f) = \left [D, \Delta_{\omega}^{k-1} (f) \right ]$$ 
for all $f \in C_c (\mathcal G, \omega).$ For any $f, \xi \in C_c (\mathcal G, \omega)$ and $(t,x) \in \mathcal G,$ adapting the same technique in \cite{joardar2025metrics}*{Lemma 2.3}, it follows that

\Bea
\Delta_{\omega} (f) (\xi) (t,x) & = & \sum\limits_{u \in \Gamma} \left (\ell (t, x) - \ell (u, x) \right ) f \left (t u^{-1}, u \cdot x \right ) \xi (u, x)\ \omega (t u^{-1}, u, x)
\Eea

\vspace{2mm}

A simple induction argument then shows the following: \begin{displaymath}\Delta_{\omega}^k (f) (\xi) (t,x) = \sum\limits_{u \in \Gamma} \left (\ell (t, x) - \ell (u, x) \right )^k f \left (t u^{-1}, u \cdot x \right ) \xi (u, x)\ \omega (t u^{-1}, u, x),\end{displaymath} for any $k \in \mathbb N.$

\vspace{2mm}



So for $f, \xi \in C_c (\mathcal G, \omega)$, noting that the cocycles take value in $\mathbb{T}$, adapting the argument in \cite{joardar2025metrics}*{Lemma 2.3}, one has the following estimate:
\begin{equation*}
 \left \|\Delta_{\omega}^k (f) (\xi) \right \|^2\leq \left \|M_{\ell}^k (f) \right \|_{I}^2 \|\xi\|^2.    
\end{equation*}

\vspace{5mm}

Therefore, $\left \|\Delta_{\omega}^k (f) \right \| \leq \left \|M_{\ell}^k (f) \right \|_{I},$ for any $f \in C_c (\mathcal G, \omega).$ This allows us to extend $\Delta^k (f)$ as a bounded operator on $L^2 (\mathcal G, \omega)$ as follows $:$ 
$$\Delta_{\omega}^k (f) (\xi) = \lim\limits_{n \to \infty} \Delta_{\omega}^k (f) (\xi_n)$$
where $\{\xi_n \}_{n \geq 1}$ is any sequence in $C_c (\mathcal G, \omega)$ converging to $\xi$ in the Hilbert module norm $\|\cdot\|.$ Next we will show that $\Delta_{\omega}^k (f)$ is adjointable for any $f \in C_c (\mathcal G, \omega).$

\begin{prop}\label{prop1}
 For any $f \in C_c (\mathcal G, \omega),$ $\Delta_{\omega}^k (f)$ is adjointable and 
 $$\Delta_{\omega}^k (f)^{\ast} = (-1)^k \Delta_{\omega}^k (f^{\ast_{\omega}}),$$ 
 where $f^{\ast_{\omega}} (t, x) = \overline {f \left (t^{-1}, t \cdot x \right ) \cdot \omega (t, t^{-1}, t \cdot x)},$ for all $(t, x) \in \mathcal G.$
\end{prop}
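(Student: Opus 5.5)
We verify the adjoint relation directly on the dense submodule $C_c(\mathcal G,\omega)$, using the $C(X)$-valued inner product $\langle\langle\cdot,\cdot\rangle\rangle$, and then extend by continuity. Since both $\Delta_{\omega}^k(f)$ and $\Delta_{\omega}^k(f^{\ast_{\omega}})$ are bounded on $L^2(\mathcal G,\omega)$ (bounded by $\|M_\ell^k(f)\|_I$ and $\|M_\ell^k(f^{\ast_\omega})\|_I$ respectively, from the estimate preceding the proposition), it suffices to show
\begin{displaymath}
\langle\langle \Delta_{\omega}^k(f)\xi,\eta\rangle\rangle=(-1)^k\langle\langle \xi,\Delta_{\omega}^k(f^{\ast_\omega})\eta\rangle\rangle
\end{displaymath}
for all $\xi,\eta\in C_c(\mathcal G,\omega)$. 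Both sides are continuous in $\xi,\eta$ for the Hilbert module norm, so the identity then holds on all of $L^2(\mathcal G,\omega)$. This exhibits an adjoint, and hence $\Delta_{\omega}^k(f)$ is adjointable.

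For the computation, fix $x\in X$ and use the explicit formula for $\Delta^k_\omega$. The left side at $x$ is
\begin{displaymath}
\sum_{t,u\in\Gamma}(\ell(t,x)-\ell(u,x))^k\,\overline{f(tu^{-1},u\cdot x)\,\xi(u,x)\,\omega(tu^{-1},u,x)}\;\eta(t,x).
\end{displaymath}
All sums are finite because of compact support. The right side at $x$ is
\begin{displaymath}
(-1)^k\sum_{u,t}\overline{\xi(u,x)}\,(\ell(u,x)-\ell(t,x))^k f^{\ast_\omega}(ut^{-1},t\cdot x)\,\omega(ut^{-1},t,x)\,\eta(t,x).
\end{displaymath}
The factor $(-1)^k(\ell(u,x)-\ell(t,x))^k$ equals $(\ell(t,x)-\ell(u,x))^k$, and it is real, so the length weights match. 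It remains to check the scalar identity
\begin{displaymath}
f^{\ast_\omega}(ut^{-1},t\cdot x)\,\omega(ut^{-1},t,x)=\overline{f(tu^{-1},u\cdot x)\,\omega(tu^{-1},u,x)}.
\end{displaymath}
This is the same identity that makes $\lambda^\omega$ a $\ast$-representation, i.e.\ the case $k=0$.

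The main obstacle is this cocycle bookkeeping. Set $s=ut^{-1}$. By definition,
\begin{displaymath}
f^{\ast_\omega}(s,t\cdot x)=\overline{f(s^{-1},u\cdot x)\,\omega(s,s^{-1},u\cdot x)}.
\end{displaymath}
Since $s^{-1}=tu^{-1}$, the required identity reduces to
\begin{displaymath}
\omega(ut^{-1},t,x)\,\overline{\omega(ut^{-1},tu^{-1},u\cdot x)}=\overline{\omega(tu^{-1},u,x)},
\end{displaymath}
that is, $\omega(s,s^{-1},u\cdot x)=\omega(s,t,x)\,\omega(s^{-1},u,x)$. To get this, apply the cocycle condition to the triple $(s,s^{-1},u)$ at the point $x$:
\begin{displaymath}
\omega(s,s^{-1},u\cdot x)\,\omega(e,u,x)=\omega(s,s^{-1}u,x)\,\omega(s^{-1},u,x).
\end{displaymath}
Normalization gives $\omega(e,u,x)=1$, and $s^{-1}u=t$, so this is exactly the required relation. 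Values of $\omega$ lie in $\mathbb T$, which justifies taking conjugates as inverses.

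Alternatively, one could bypass the cocycle computation by induction on $k$. This route invokes the $k=0$ fact that $\lambda^\omega(f)^\ast=\lambda^\omega(f^{\ast_\omega})$ is adjointable, together with symmetry of $M_\ell$ on the appropriate domain. It gives $[M_\ell,T]^\ast=-[M_\ell,T^\ast]$ for operators $T$ preserving finitely supported functions. However, the direct computation above avoids domain issues with the unbounded $M_\ell$, so that is the route I would take.
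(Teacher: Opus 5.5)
Your proposal is correct and follows essentially the paper's route: a direct verification of $\langle\langle \Delta_{\omega}^k(f)\xi,\eta\rangle\rangle=(-1)^k\langle\langle \xi,\Delta_{\omega}^k(f^{\ast_\omega})\eta\rangle\rangle$ on the dense submodule $C_c(\mathcal G,\omega)$, by interchanging finite sums and applying the cocycle identity $\omega(s,s^{-1},u\cdot x)=\omega(s,t,x)\,\omega(s^{-1},u,x)$ with $s=ut^{-1}$. The differences are minor. You handle all $k$ at once through the closed-form kernel of $\Delta_{\omega}^k$, where the swap $t\leftrightarrow u$ produces the sign $(-1)^k$, while the paper computes the two halves of the commutator for $k=1$ and then appeals to induction. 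You also derive the cocycle identity explicitly from the cocycle condition at $(s,s^{-1},u)$ and normalization, whereas the paper uses that step without comment.
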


\begin{proof}
Since $C_c (\mathcal G, \omega)$ is dense in $L^2 (\mathcal G, \omega),$ in order to show the desired equality it is enough to show that for any $\xi, \eta \in C_c (\mathcal G, \omega),$ we have

$$\left \langle \left \langle \Delta_{\omega}^k (f) (\xi), \eta \right \rangle \right \rangle = (-1)^k \left \langle \left\langle \xi, \Delta_{\omega}^k (f^{\ast_{\omega}}) (\eta) \right \rangle \right \rangle.$$
We show the equality for $k = 1.$ For $f, \xi, \eta \in C_c (\mathcal G, \omega)$ and $x \in X$ we have

\Bea 
&& \left \langle \left \langle M_{\ell} (\lambda^{\omega} (f) (\xi)), \eta \right \rangle \right \rangle (x) \\ & = & \sum\limits_{t \in \Gamma} \ell (t,x) \overline {\lambda^{\omega} (f) (\xi) (t,x)}\ \eta (t, x) \\ & = & \sum\limits_{t \in \Gamma} \ell (t, x) \sum\limits_{u \in \Gamma} \overline {f \left (t u^{-1}, u \cdot x \right )}\ \overline {\xi (u, x)}\ \overline {\omega (t u^{-1}, u, x)}\ \eta (t,x) \\ & = & \sum\limits_{t \in \Gamma} \sum\limits_{u \in \Gamma} \ell (t, x) \overline {f \left (t u^{-1}, u \cdot x \right )}\ \overline {\xi (u, x)}\ \overline {\omega (t u^{-1}, u, x)}\ \eta (t,x) \\ & \stackrel{\text {Fubini}}{=} & \sum\limits_{u \in \Gamma} \sum\limits_{t \in \Gamma} \ell (t, x) \overline {f \left (t u^{-1}, u \cdot x \right )}\ \overline {\xi (u, x)}\ \overline {\omega (t u^{-1}, u, x)}\ \eta (t,x) \\ & = & \sum\limits_{u \in \Gamma} \overline {\xi (u, x)}\ \sum\limits_{t \in \Gamma} \ell (t, x) f^{\ast_{\omega}} \left (u t^{-1}, t \cdot x \right )\ \eta (t,x)\ \overline {\omega (t u^{-1}, u, x)}\ \omega (u t^{-1}, t u^{-1}, u \cdot x)  \\ & = & \sum\limits_{u \in \Gamma} \overline {\xi (u, x)}\ \sum\limits_{t \in \Gamma} f^{\ast_{\omega}} \left (u t^{-1}, t \cdot x \right )\ M_{\ell} (\eta) (t,x)\ \omega (u t^{-1}, t, x) \\ & = & \sum\limits_{u \in \Gamma} \overline {\xi (u, x)} \left (f^{\ast_{\omega}} \ast_{\omega} M_{\ell} (\eta) \right ) (u,x) \\ & = & \sum\limits_{u \in \Gamma} \overline {\xi (u, x)} \lambda^{\omega} (f^{\ast_{\omega}}) \left (M_{\ell} (\eta) \right ) (u, x) \\ & = & \left \langle \left \langle \xi, \lambda^{\omega} (f^{\ast_{\omega}}) \left (M_{\ell} (\eta) \right ) \right \rangle \right \rangle (x) \  
\Eea

and, 

\Bea
&& \left \langle \left \langle \lambda^{\omega} (f) \left (M_{\ell} (\xi) \right ), \eta \right \rangle \right \rangle (x) \\ & = & \sum\limits_{t \in \Gamma} \overline {\lambda^{\omega} (f) \left (M_{\ell} (\xi) \right ) (t, x)}\ \eta (t, x) \\ & = & \sum\limits_{t \in \Gamma} \overline {\left (f \ast_{\omega} M_{\ell} (\xi) \right ) (t, x)}\ \eta (t, x) \\ & = & \sum\limits_{t \in \Gamma} \sum\limits_{h \in \Gamma} \overline {f \left (t u^{-1}, u \cdot x \right )}\ \overline {M_{\ell} (\xi) (u, x)}\ \overline {\omega (t u^{-1}, u, x)}\ \eta (t, x) \\ & \stackrel{\text {Fubini}} {=} & \sum\limits_{h \in \Gamma} \sum\limits_{g \in \Gamma} \overline {f \left (t u^{-1}, u \cdot x \right )}\ \ell (u, x)\ \overline {\xi (u, x)}\ \overline {\omega (t u^{-1}, u, x)}\ \eta (t, x) \\ & = & \sum\limits_{u \in \Gamma} \ell (u, x) \overline {\xi (u, x)} \sum\limits_{t \in \Gamma} \overline {f \left (t u^{-1}, u \cdot x \right )}\ \eta (t, x)\ \overline {\omega (t u^{-1}, u, x)}\ \\ & = & \sum\limits_{u \in \Gamma} \ell (u, x) \overline {\xi (u, x)} \sum\limits_{t \in \Gamma} f^{\ast_{\omega}} \left (u t^{-1}, t \cdot x \right )\ \eta (t,x)\ \overline {\omega (t u^{-1}, u, x)}\ \omega (u t^{-1}, t u^{-1}, u \cdot x) \\ & = & \sum\limits_{u \in \Gamma} \ell (u, x) \overline {\xi (u, x)} \sum\limits_{t \in \Gamma} f^{\ast_{\omega}} \left (u t^{-1}, t \cdot x \right )\ \eta (t,x)\ \omega (u t^{-1}, t, x) \\ & = & \sum\limits_{u \in \Gamma} \ell (u,x) \overline {\xi (u, x)} \left (f^{\ast_{\omega}} \ast_{\omega} \eta \right ) (u, x)\ \\ & = & \sum\limits_{u \in \Gamma} \overline {\xi (u, x)} M_{\ell} \left (f^{\ast_{\omega}} \ast_{\omega} \eta \right ) (u, x) \\ & = & \left \langle \left \langle \xi, M_{\ell} \left (\lambda^{\omega} (f^{\ast_{\omega}}) (\eta) \right ) \right \rangle \right \rangle (x)
\Eea
Hence we have
\Bea 
\left \langle \left \langle \Delta_{\omega} (f) (\xi), \eta \right \rangle \right \rangle & = & \left \langle \left \langle M_{\ell} (\lambda^{\omega} (f) (\xi)), \eta \right \rangle \right \rangle - \left \langle \left \langle \lambda^{\omega} (f) (M_{\ell} (\xi)), \eta \right \rangle \right \rangle \\ & = & \left \langle \left \langle \xi, \lambda^{\omega} (f^{\ast_{\omega}}) \left (M_{\ell} (\eta) \right ) \right \rangle \right \rangle - \left \langle \left \langle \xi, M_{\ell} \left (\lambda^{\omega} (f^{\ast_{\omega}}) (\eta) \right ) \right \rangle \right \rangle \\ & = & - \left \langle \left \langle \xi, \Delta_{\omega} (f^{\ast_{\omega}}) (\eta) \right \rangle \right \rangle   
\Eea
This shows that $\Delta_{\omega} (f)^{\ast} = - \Delta_{\omega} (f^{\ast_{\omega}})$ for any $f \in C_c (\mathcal G, \omega),$ which proves the equality for the base case $k = 1.$ Then a routine induction argument establishes $\Delta_{\omega}^k (f)^{\ast} = (-1)^{k} \Delta_{\omega}^k (f^{\ast_{\omega}})$ for all $f \in C_c (\mathcal G, \omega),$ as required.
\end{proof}

\begin{defn}
Given a length function $\ell :\mathcal{G}\rightarrow[0,+\infty)$ on a transformation groupoid $\mathcal{G} = \Gamma \rtimes X,$ and for any $k \in \mathbb N,$ we define $L_{\ell}^{k}:C_{r}^{\ast}(\mathcal{G},\omega)\rightarrow[0,+\infty]$ by

$$L_{\ell}^k (f) : = \begin{cases} \left \|\Delta_{\omega}^k (f) \right \|_{\mathrm {adj}}, \quad f \in C_c (\mathcal G,\omega), \\ + \infty, \quad f \in C_r^{\ast} (\mathcal G,\omega) \setminus C_c (\mathcal G,\omega). \end{cases}$$
\end{defn}

\begin{cor}
For any $f \in C_c (\mathcal G, \omega)$ we have $$L_{\ell}^{k} (f) \geq \max \left \{\sup\limits_{x \in X} \left (\sum\limits_{t \in \Gamma} \ell (t, x)^{2 k} \left \lvert f (t, x) \right \rvert^{2} \right )^{\frac {1} {2}}, \sup\limits_{x \in X} \left (\sum\limits_{t \in \Gamma} \ell (t, x)^{2 k} \left \lvert f \left (t^{-1}, t \cdot x \right ) \right \rvert^{2} \right )^{\frac {1} {2}} \right \}.$$
\end{cor}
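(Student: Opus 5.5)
The plan is to test the bounded operator $\Delta_{\omega}^k(f)$ against a single well-chosen vector in $L^2(\mathcal G,\omega)$, namely the unit $\delta_e$. Then we read off the resulting norm using the Hilbert module norm $\|\xi\|=\sup_x(\sum_t|\xi(t,x)|^2)^{1/2}$. Since $\|\delta_e\|=1$, we have $L^k_\ell(f)=\|\Delta^k_\omega(f)\|_{\mathrm{adj}}\ge\|\Delta^k_\omega(f)(\delta_e)\|$. From the explicit formula for $\Delta_\omega^k$, only $u=e$ contributes:
\begin{displaymath}
\Delta^k_\omega(f)(\delta_e)(t,x)=(\ell(t,x)-\ell(e,x))^k f(t,x)\,\omega(t,e,x)=\ell(t,x)^k f(t,x).
\end{displaymath}
Here we use $\ell(e,x)=0$ and the normalization $\omega(t,e,x)=1$. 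Taking the module norm gives the first term of the maximum, $\sup_x(\sum_t \ell(t,x)^{2k}|f(t,x)|^2)^{1/2}$.

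For the second term, the plan is to use Proposition \ref{prop1}. It gives $\Delta^k_\omega(f)^*=(-1)^k\Delta^k_\omega(f^{*_\omega})$, and an adjointable operator has the same norm as its adjoint, so $L^k_\ell(f)=\|\Delta^k_\omega(f^{*_\omega})\|$. Applying the first step to $f^{*_\omega}$ yields the lower bound
\begin{displaymath}
\sup_x\Bigl(\sum_t\ell(t,x)^{2k}|f^{*_\omega}(t,x)|^2\Bigr)^{1/2}.
\end{displaymath}
Since $\omega$ is $\mathbb T$-valued, $|f^{*_\omega}(t,x)|=|f(t^{-1},t\cdot x)|$, which is exactly the second expression. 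Taking the maximum of the two bounds finishes the argument.

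The only point needing care is that $\delta_e$ does lie in $C_c(\mathcal G,\omega)\subset L^2(\mathcal G,\omega)$. This holds because $X$ is compact and the unit space is clopen, as already noted in the paper. There is one more check: the norm of $\Delta^k_\omega(f)$ as an adjointable operator dominates its value on any unit vector, and the Hilbert module norm is indeed the one given by the displayed supremum. Both are immediate from the definitions, so no step is a real obstacle. The slightly delicate bookkeeping is the cocycle factor in the involution, which disappears upon taking absolute values.
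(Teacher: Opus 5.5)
Your proposal is correct and follows the paper's proof: both evaluate $\Delta^k_{\omega}(f)$ and $\Delta^k_{\omega}(f^{\ast_{\omega}})$ on the unit vector $\delta_e$, and both use Proposition \ref{prop1} to get $L^k_{\ell}(f)=L^k_{\ell}(f^{\ast_{\omega}})$. You also supply the computations the paper leaves as ``easy to check'', namely $\Delta^k_{\omega}(f)(\delta_e)(t,x)=\ell(t,x)^k f(t,x)$ (using $\ell(e,x)=0$ and $\omega(t,e,x)=1$) and $\lvert f^{\ast_{\omega}}(t,x)\rvert=\lvert f(t^{-1},t\cdot x)\rvert$.
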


\begin{proof}
By invoking Proposition \ref{prop1}, we find that for all $f \in C_c (\mathcal G, \omega),$ $$L_{\ell}^{k} (f) = L_{\ell}^{k} \left (f^{\ast_{\omega}} \right ).$$ Since $\delta_e \in C_c (\mathcal G, \omega) \subseteq L^2 (\mathcal G, \omega)$ with module norm $\|\delta_e\| = 1,$ it follows that $$L_{\ell}^{k} (f) \geq \max \left \{\left \|\Delta_{\omega}^{k} (f) (\delta_e) \right \|, \left \|\Delta_{\omega}^{k} \left (f^{\ast_{\omega}} \right ) (\delta_e) \right \| \right \}.$$
Now it is easy to check that $$\left \|\Delta_{\omega}^{k} (f) (\delta_e) \right \| = \sup\limits_{x \in X} \left (\sum\limits_{t \in \Gamma} \ell (t, x)^{2 k} \left \lvert f (t, x) \right \rvert^{2} \right )^{\frac {1} {2}},$$ and, $$\left \|\Delta_{\omega}^{k} \left (f^{\ast_{\omega}} \right ) (\delta_e) \right \| = \sup\limits_{x \in X} \left (\sum\limits_{t \in \Gamma} \ell (t, x)^{2 k} \left \lvert f \left (t^{-1}, t \cdot x \right ) \right \rvert^{2} \right )^{\frac {1} {2}}.$$
Thus the desired result follows.

\end{proof}   
Now we are ready to discuss a class of CQMS structures on cocycle twists of transformation groupoid $C^{\ast}$-algebras. Let $\Gamma$ be a finitely generated discrete group acting on a compact metric space $(X, d).$ Consider the associated transformation groupoid $\mathcal G : = \Gamma \rtimes X$ endowed with a partition $\mathcal K : = \bigsqcup\limits_{t \in \Gamma} \{t\} \times X,$ where each $K_t : = \{t\} \times X \simeq X$ is equipped with a metric $d_t$ which is weakly equivalent to $d$ and $d \leq d_t$ for all $t \in \Gamma$ with $d_e = d.$ Following the notion of Austad (\cite{austad2026quantum}*{Definition 3.2}, we call it a metric stratification. Let $L_t$ be the \clipnorm\ on $C (X)$ induced by $d_t$ for all $t \in \Gamma.$ Given such a stratification $\mathcal K$ of $\Gamma,$ a length function $\ell : \mathcal G \rightarrow [0, \infty)$ and a $2$-cocycle $\omega : \Gamma \times X \times X \rightarrow \mathbb T,$ we define a countable collection of stratified Lipschitz seminorms $\left \{L^{\mathcal K, k}_{\ell} \right \}_{k = 1}^{\infty}$ on $C_r^{\ast} (\mathcal G, \omega)$ in the following way $:$
$$L^{\mathcal K, k}_{\ell} (f) = \begin{cases} \max \left \{L_{\ell}^{k} (f),\ \sup\limits_{t \in \Gamma} L_t (a_t) \right \}, \quad \mathrm {if}\ f = \sum\limits_{t \in \Gamma} \delta_t a_t \in C_c \left (\Gamma, \mathrm {Lip} (X) \right), \\ \infty, \quad \mathrm {otherwise}, \end{cases}$$
where $\mathrm {Lip} (X)$ is the set of all Lipschitz continuous functions on the compact metric space $(X, d).$

\begin{lem}
    $L^{\mathcal{K},k}_{\ell}$ is a sequence of Lipschitz seminorms on $C^{\ast}_{r}(\Gamma\rtimes X,\omega)$ for all $k.$
\end{lem}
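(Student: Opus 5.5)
The plan is to check the defining properties directly: $L^{\mathcal K,k}_{\ell}$ should be a $[0,+\infty]$-valued seminorm, its domain should be dense, and its kernel should be $\mathbb C 1$. Here the unit is $\delta_e 1$.

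\emph{Well-definedness and seminorm property.} Since $X$ is compact, $\Gamma$ is discrete and $\Gamma\times X$ carries the product topology, every $f\in C_c(\mathcal G,\omega)$ is supported on finitely many slices $\{t\}\times X$. So $f=\sum_t \delta_t a_t$ with $a_t(x)=f(t,x)$, and this decomposition is unique. Hence the condition $f\in C_c(\Gamma,\mathrm{Lip}(X))$ and the numbers $L_t(a_t)$ depend only on $f$, and $f\mapsto a_t$ is linear.

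Next, $L^k_\ell(f)=\|\Delta^k_\omega(f)\|_{\mathrm{adj}}$ is a seminorm on $C_c(\mathcal G,\omega)$, because $f\mapsto\Delta^k_\omega(f)$ is linear. Each $L_t$ is a seminorm on $C(X)$. A supremum of seminorms, and a maximum of two seminorms, is again a seminorm in the extended sense.

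Finally, $C_c(\Gamma,\mathrm{Lip}(X))$ is a linear subspace, since $\mathrm{Lip}(X)$ is. Outside it the value is $+\infty$. So homogeneity and the triangle inequality hold on all of $C^\ast_r(\mathcal G,\omega)$, using the conventions $0\cdot\infty=0$ and $\infty+a=\infty$.

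\emph{Finiteness and dense domain.} Take $f=\sum_{t\in F}\delta_t a_t$ with $F$ finite and $a_t\in\mathrm{Lip}(X,d)$. Since $d\le d_t$,
\[
\frac{|a(x)-a(y)|}{d_t(x,y)}\le \frac{|a(x)-a(y)|}{d(x,y)},
\]
so $L_t(a_t)\le L_d(a_t)<\infty$. Because only finitely many $t$ occur, the supremum over $t\in\Gamma$ is finite.

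For the other term, we have $L^k_\ell(f)\le\|M_\ell^k f\|_I$ from the estimate preceding Proposition \ref{prop1}. The right-hand side is finite since $\ell$ is continuous on the compact set $F\times X$.

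Density: $\mathrm{Lip}(X)$ is a unital self-adjoint subalgebra of $C(X)$ separating points (it contains $d(x,\cdot)$), so it is dense by Stone--Weierstrass. Then $C_c(\Gamma,\mathrm{Lip}(X))$ is dense in $C_c(\mathcal G,\omega)$ for the $I$-norm, hence for the reduced norm by Inequality \eqref{I-n0rm dominance}. Therefore it is dense in $C^\ast_r(\mathcal G,\omega)$.

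\emph{Kernel.} First, constants lie in the kernel. For $f=\delta_e c$ the formula for $\Delta^k_\omega$ only involves terms with $tu^{-1}=e$, i.e.\ $u=t$. There the factor $\ell(t,x)-\ell(u,x)$ vanishes, so $\Delta^k_\omega(f)=0$. Also $L_e(c)=0$.

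Conversely, suppose $L^{\mathcal K,k}_\ell(f)=0$. Then $f\in C_c(\Gamma,\mathrm{Lip}(X))$ and $L^k_\ell(f)=0$. The preceding Corollary then gives $\ell(t,x)^k|f(t,x)|=0$ for all $(t,x)$. Since $\ell$ vanishes exactly on the unit space, $\ell(t,x)>0$ for $t\ne e$, and so $f=\delta_e a_e$. Then $L_e(a_e)=0$ with $d_e=d$ a genuine metric, which forces $a_e$ to be constant. Thus the kernel is $\mathbb C\,\delta_e 1$.

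\emph{Main obstacle.} I expect the main care to be needed in the kernel step. It uses the condition "vanishes exactly on the unit space" in the definition of a length function. It also uses the lower bound from the Corollary, which is only available for $f\in C_c$. That is fine here, since the value is $+\infty$ off $C_c(\Gamma,\mathrm{Lip}(X))$. The remaining steps are routine once the uniqueness of the slice decomposition is noted.
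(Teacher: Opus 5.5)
Your proposal is correct and follows the same route as the paper. The paper gets finiteness of the stratified part from $d\le d_t\Rightarrow L_t\le L_d$ and density of the domain from density of $\mathrm{Lip}(X)$ in $C(X)$, and simply calls the seminorm and kernel properties ``easy''; your kernel argument (the Corollary's lower bound plus $\ell>0$ off the unit space) and your finiteness bound $L^k_\ell(f)\le\|M_\ell^k f\|_I$ on $C_c$ fill in exactly those omitted details.
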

\begin{proof}
     It is easy to show that $L^{\mathcal K, k}_{\ell}$ is a Lipschitz seminorm having one dimensional kernel generated by $\delta_{e} 1,$ where $e$ is the identity element in $\Gamma.$ To see the density of the domain of $L^{\mathcal K, k}_{\ell}$ in $C_r^{\ast} (\mathcal G, \omega),$ note that $d \leq d_t$ for all $t \in \Gamma$ and hence $L_t \leq L$ for all $t \in \Gamma.$ So the domain $\mathrm {Lip} (X)$ of $L$ is contained in the domain of $L_t$ for all $t \in \Gamma.$ Thus the domain of $L^{\mathcal K, k}_{\ell}$ is precisely $C_c (\Gamma, \mathrm {Lip} (X)).$ Since $\mathrm {Lip} (X)$ is dense in $C (X),$ it follows that the domain of $L^{\mathcal K, k}$ is dense in $C_r^{\ast} (\mathcal G, \omega).$
\end{proof}

\begin{rem}
If the underlying length function is clear from the context, we often suppress the index $\ell$ from the notation of the stratified Lipschitz seminorm and simply write it as $L^{\mathcal K, k}$ without worrying much about the length function.
\end{rem}

Next we show that in fact $L^{\mathcal{K},k}$'s are \clipnorm\  i.e. they metrize the weak$^{\ast}$-topolgy on the state space provided $C^{\ast}_{r}(\Gamma\rtimes X,\omega)$ satisfies RD with respect to $\ell.$ The proof of the following theorem is essentially an adaptation of the argument in \cite{austad2026quantum}*{Lemma 3.13, Proposition 3.17}. However, due to the fiberwise structure of the transformation groupoid, the analysis simplifies significantly, even though the metric stratification is more general in our setting. Hence we decide to include its proof in full generality in this paper for the sake of completeness and in order to keep the exposition self-contained. 
\begin{thm} \label {CQMS}
Let $\Gamma$ be a countable discrete group acting on a compact Hausdorff metric space $(X, d)$ and let $\mathcal G = \Gamma \rtimes X$ be the associated transformation groupoid endowed with a metric stratification and a $2$-cocycle $\omega : \Gamma \times \Gamma \times X \rightarrow \mathbb T.$ Let $\ell \colon \mathcal G \to [0,\infty)$ be a proper continuous length function such that $\mathcal G$ has the property of rapid decay with respect to $\ell$ with decay exponent $p > 0$ and constant $C > 0,$ then $\left (C_r^{\ast} (\mathcal G, \omega), L^{\mathcal K, k} \right )$ is a compact quantum metric space for all $k > p.$
\end{thm}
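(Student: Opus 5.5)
We will verify the standard Rieffel/Ozawa--Rieffel criterion: $L^{\mathcal K,k}$ is a separating seminorm with kernel $\mathbb C\delta_e 1$ and dense domain (already shown in the preceding lemma). It therefore suffices to show that the image of
\[
\{f:L^{\mathcal K,k}(f)\le 1,\ \|f\|_{\mathrm{red},\omega}\le 1\}
\]
in $C_r^{\ast}(\mathcal G,\omega)$ is totally bounded for the reduced norm. Kerr's framework (norm-bounded unit Lip-ball) makes this equivalent to $d_L$ metrizing the weak$^{\ast}$ topology. Equivalently, we can show that $\{f : L^{\mathcal K,k}(f)\le 1\}$ is totally bounded modulo scalars. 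Here one uses a state such as $\tau=\mu\circ\mathbb E$ to fix the scalar part: replacing $f$ by $f-\tau(f)\delta_e1$, the $e$-coefficient $a_e$ then has $\mu$-mean zero and $L_e(a_e)\le 1$, so $\|a_e\|_\infty\le \mathrm{diam}(X,d)$.

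\textbf{Step 1 (tail estimate via rapid decay).} For $f=\sum_t\delta_t a_t$ with $L^{\mathcal K,k}(f)\le1$ and $N\in\mathbb N$, let $f_N$ be the restriction of $f$ to $\{(t,x):\ell_\Gamma(t)\le N\}$, i.e.\ keep the $a_t$ with $t$ in the finite set $S_N=\{t:\ell_\Gamma(t)\le N\}$; finiteness of $S_N$ comes from properness, as in Lemma \ref{induced length}. This truncation is not cut in $x$, so $f_N$ stays in $C_c(\Gamma,\mathrm{Lip}(X))$. The tail $g=f-f_N$ is supported where $\ell_\Gamma(t)>N$, but we need $\ell(t,x)$ large pointwise. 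So instead I will truncate by $\ell$ fiberwise only inside the norm estimate. By rapid decay,
\[
\|g\|_{\mathrm{red},\omega}\le C\|g\|_{2,p,\ell}.
\]
Then, using $(1+\ell)^{2p}\le c\,\ell^{2k}(1+\ell)^{2p-2k}$ for $\ell\ge1$ and the Corollary after Proposition \ref{prop1},
\[
\|g\|_{2,p,s,\ell}^2=\sup_x\sum_t|g(t,x)|^2(1+\ell(t,x))^{2p}\le \sup_x\sum_{t\in\mathrm{supp}}\ell(t,x)^{2k}|f(t,x)|^2\,c\,(1+\ell(t,x))^{2(p-k)}.
\]
The main obstacle is that $\ell(t,x)$ might be small at some $x$ even though $\ell_\Gamma(t)>N$. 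To handle this, I will cut with a continuous partition instead: pick $\chi_N(t,x)$ equal to $1$ where $\ell(t,x)\le N$ and $0$ where $\ell(t,x)\ge N+1$, and split $f=f\chi_N+f(1-\chi_N)$. Properness gives that $\{\ell\le N+1\}$ is precompact, hence is contained in finitely many sheets $\{t\}\times X$. The remainder then satisfies
\[
\|f(1-\chi_N)\|_{\mathrm{red},\omega}\le C'(1+N)^{p-k}\to0,
\]
since $k>p$; the same bound holds for the $r$-part using inverse invariance of $\ell$ and the second term in the Corollary.

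\textbf{Step 2 (compactness of the truncated parts).} The pieces $f\chi_N$ live in the finite-dimensional-over-$C(X)$ space $\bigoplus_{t\in S_{N+1}}\delta_tC(X)$, on which the reduced norm is dominated by $\sum_{t\in S_{N+1}}\|b_t\|_\infty$ via the $I$-norm inequality \eqref{I-n0rm dominance}. We therefore only need the coefficient functions $a_t\chi_N(t,\cdot)$, $t\in S_{N+1}$, to range in a totally bounded subset of $C(X)$, and we use Arzel\`a--Ascoli for this. Uniform boundedness holds because $\|a_t\|_\infty\le\|f\|_\infty\le\|f\|_{\mathrm{red},\omega}\le1$ by \eqref{I-n0rm dominance}, for the norm-bounded Lip-ball. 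Equicontinuity holds because $L_t(a_t)\le1$ together with $d\le d_t$ gives $d$-Lipschitz constant at most... Here care is needed: $d\le d_t$ gives $L_t\le L$, the wrong direction. Instead I will use that $d_t$ is weakly equivalent to $d$, so $d_t$-Lipschitz functions with constant $1$ are equicontinuous for $d$, since $d_t\to0$ uniformly as $d\to0$ for each fixed $t$. Only finitely many $t$ occur, so this suffices. The cutoff $\chi_N(t,\cdot)$ is a fixed continuous function for each $t$, so the products remain equicontinuous.

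\textbf{Step 3 (conclusion).} Given $\varepsilon>0$, choose $N$ with $C'(1+N)^{p-k}<\varepsilon/2$, then cover the truncated parts by a finite $\varepsilon/2$-net from Step 2. This yields total boundedness of $\mathcal L_1$ (or of the Lip-ball modulo scalars), hence $d_{L^{\mathcal K,k}}$ metrizes the weak$^{\ast}$ topology. I expect Step 1 to be the delicate step: handling the fiberwise dependence of $\ell$ while keeping the truncation inside the relevant Lipschitz/continuous class, and using $k>p$ to make the weighted tail small for both the source and range parts of the rapid decay norm.
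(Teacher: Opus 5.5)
The gap is in your opening reduction. Everything you actually prove concerns the norm-bounded ball $\{f : L^{\mathcal K,k}(f)\le 1,\ \|f\|_{\mathrm{red},\omega}\le 1\}$. The uniform boundedness in Step 2 comes solely from $\|a_t\|_\infty\le\|f\|_{\mathrm{red},\omega}\le 1$.

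Total boundedness of that set does not by itself make $d_L$ metrize the weak$^{\ast}$ topology. The norm-bounded form of Rieffel's criterion presupposes a finite-radius bound $\|f-\sigma(f)\delta_e1\|_{\mathrm{red},\omega}\le R\,L^{\mathcal K,k}(f)$, which you never establish. The usual compactness trick for extracting such a bound from total boundedness needs lower semicontinuity, and $L^{\mathcal K,k}$ lacks it, since it is $+\infty$ off $C_c(\Gamma,\mathrm{Lip}(X))$.

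Your alternative ``modulo scalars'' formulation is the right criterion. It is what the paper verifies: total boundedness of $\{f\in C_c(\mathcal G,\omega):\sigma(f)=0,\ L^{\mathcal K,k}(f)\le 1\}$ with $\sigma=\mu\circ\mathbb E$. But you supply the needed sup-norm bound only for the $e$-coefficient, via $\|a_e\|_\infty\le\mathrm{diam}(X,d)$. For $t\neq e$ you never bound $\|a_t\|_\infty$ by $L^{\mathcal K,k}(f)$ alone.

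The missing ingredient is that $\ell$ is strictly positive off the unit space, uniformly on the finitely many low-length sheets.
Let $T$ be the finite set of $t$ with $\min_x\ell(t,x)\le N+1$; it is finite by properness. This $T$, not $S_{N+1}=\{t:\ell_\Gamma(t)\le N+1\}$, is the index set that carries $f\chi_N$. By continuity and compactness, $q:=\min\{\ell(t,x): t\in T\setminus\{e\},\ x\in X\}>0$. The Corollary after Proposition \ref{prop1} then gives $q^{k}|a_t(x)|\le \ell(t,x)^{k}|a_t(x)|\le L^{k}_{\ell}(f)\le 1$, that is, $\|a_t\|_\infty\le q^{-k}$ for $t\in T\setminus\{e\}$.

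With this and your bound on $a_e$, Step 2 goes through for the $\sigma$-normalized Lip-ball. Your tail estimate in Step 1 uses only $L^{k}_{\ell}(f)\le 1$, so Steps 1--3 then reproduce the paper's argument; the paper's constant $A_k=q^{-k}$ is exactly this bound.

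Otherwise your route matches the paper's. Arzel\`a--Ascoli on each sheet is the abstract form of its partition-of-unity approximation with respect to $d_t$. Your fiberwise cutoff $\chi_N$ is valid but unnecessary. Truncating to whole sheets $\{t:\exists x,\ \ell(t,x)\le N\}$, as the paper does, already forces $\ell(t,x)>N$ at every $x$ on the discarded sheets.
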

\begin{proof}
Let $\mu$ be a probability measure on $X$ and $E$ be the $C (X)$-valued conditional expectation on $C_r^{\ast} (\mathcal G, \omega),$ where $\mathcal G : = \Gamma \rtimes X.$ Consider the state $\sigma = \mu \circ E$ on $C_r^{\ast} (\mathcal G, \omega).$ In order to prove the result we will first show that for any finite subset $\Gamma_1 \subseteq \Gamma,$ the set $$E_{\mathcal K, k}^{\sigma} [\Gamma_1] : = \left \{f \in C_c (\mathcal G, \omega)\ :\ \sigma (f) = 0, L^{\mathcal K, k} (f) \leq 1\ \mathrm{and}\ \mathrm{supp} (f) \subseteq \bigsqcup\limits_{t \in \Gamma_1} K_t \right \}$$ is totally bounded with respect to the reduced norm, where $K_t = \{t\} \times X$ for all $t \in \Gamma.$ Fix some $\varepsilon > 0.$ We wish to show that $E_{\mathcal K, k}^{\sigma} [\Gamma_1]$ can be covered by finitely many $\varepsilon$-balls in the reduced norm. Define a metric $\tau_t$ on $X$ defined by $$\tau_t (x, y) = d_t ((t, x), (t, y)),$$ $x, y \in X,$ for all $t \in \Gamma.$ Then it is trivial to check that $\left (K_t, d_t \right ) \simeq \left (X, \tau_t \right ).$ Also since $d_t$ is weakly topologically equivalent to $d,$ we have $\left (X, \tau_t \right ) \simeq (X, d).$ This shows that $\left (K_t, d_t \right )$ is a compact metric space and hence totally bounded for all $t \in \Gamma.$ So for all $t \in \Gamma_1$ we have points $\left (t, x_{j}^{(t)} \right ) = \gamma_j^{(t)} \in K_t$ for $j = 1, \cdots, h_t$ such that $K_t \subseteq \bigcup\limits_{j = 1}^{h_t} B_{d_t} \left (\gamma_j^{(t)}, \frac {\varepsilon} {2 m} \right ),$ where $m = \left \lvert \Gamma_1 \right \rvert.$ Let $\left \{\rho_{j}^{(t)} \right \}_{1 \leq j \leq h_t}$ be the partition of unity subordinate to the cover $\left \{B_{d_t} \left (\gamma_{j}^{(t)}, \frac {\varepsilon} {2 m} \right ) \right \}_{1 \leq j \leq h_t},$ for all $t \in \Gamma_1.$ 

Now, for $f \in E_{\mathcal K, k}^{\sigma} [\Gamma_1],$ write $f = \sum\limits_{t \in \Gamma_1} \delta_t a_t$ and set $$\Phi_{\varepsilon} (f) : = \sum\limits_{t \in \Gamma_1} \delta_t \left (\sum\limits_{j = 1}^{h_t} a_t \left (x_j^{(t)} \right ) \rho_j^{(t)} (t, \cdot) \right ).$$
Then we have 
\Bea
\left \|f - \Phi_{\varepsilon} (f) \right \|_{\mathrm {red}} & = & \left \|\sum\limits_{t \in \Gamma_1} \delta_t a_t - \sum\limits_{t \in \Gamma_1} \delta_t \left (\sum\limits_{j = 1}^{h_t} a_t \left (x_j^{(t)} \right ) \rho_{j}^{(t)} (t, \cdot) \right ) \right \|_{\mathrm {red}} \\ & \leq & \sum\limits_{t \in \Gamma_1} \left \|a_t - \sum\limits_{j = 1}^{h_t} a_t \left (x_j^{(t)} \right ) \rho_{j}^{(t)} (t, \cdot) \right \|_{\infty} \\ & = & \sum\limits_{t \in \Gamma_1} \sup\limits_{x \in X} \left \lvert a_t (x) - \sum\limits_{j = 1}^{h_t} a_t \left (x_j^{(t)} \right ) \rho_{j}^{(t)} (t, x) \right \rvert \\ & \leq & \sum\limits_{t \in \Gamma_1} \sup\limits_{x \in X} \sum\limits_{j = 1}^{h_t} \left \lvert a_t (x) - a_t \left (x_j^{(t)} \right ) \right \rvert \rho_{j}^{(t)} (t, x) \\ & \leq & \sum\limits_{t \in \Gamma_1} \sup\limits_{x \in X} \sum\limits_{j = 1}^{h_t} L_t \left (a_t \right ) d_t \left ((t, x), (t, x_j^{(t)}) \right ) \rho_j^{(t)} (t, x) \\ & \leq & m \cdot \frac {\varepsilon} {2 m} \cdot L^{\mathcal K, k} (f) \\ & \leq & \frac {\varepsilon} {2} \\ & < & \varepsilon. 
\Eea
Thus we can arrange that $E_{\mathcal K, k}^{\sigma} [\Gamma_1]$ is less than $\varepsilon$ away in the reduced norm from its image under $\Phi_{\varepsilon}$ in the finite dimensional subspace of $C_r^{\ast} (\mathcal G, \omega)$ spanned by $\left ( \left (\rho_j^{(t)} \right )_{j = 1}^{h_t} \right )_{t \in \Gamma_1}.$ Thus in order to show that $E_{\mathcal K, k}^{\sigma} [\Gamma_1]$ is totally bounded in the reduced norm it is enough to show that the image of $\Phi_{\varepsilon}$ is bounded in the reduced norm.

\vspace{2mm}

For this purpose, let $f \in E_{\mathcal K, k}^{\sigma} [\Gamma_1],$ and $$\Phi_{\varepsilon} (f) = \sum\limits_{t \in \Gamma_1} \delta_t \left (\sum\limits_{j = 1}^{h_t} a_t \left (x_j^{(t)} \right ) \rho_{j}^{(t)} (t, \cdot ) \right ).$$ We need to show that there is a uniform bound of the reduced norm of those elements. First, let us obtain an upper bound.

$$\left \|\Phi_{\varepsilon} (f) \right \|_{\mathrm {red}} \leq \sum\limits_{t \in \Gamma_1} \sum\limits_{j = 1}^{h_t} \left \lvert a_t \left (x_j^{(t)} \right ) \right \rvert \left \|\rho_j^{(t)} (t, \cdot) \right \|_{\infty} \leq m \left (\max\limits_{t \in \Gamma_1} h_t \right ) \left (\max\limits_{t, j} a_t \left (x_{j}^{(t)} \right ) \right ).$$
We first consider $t \neq e.$ Since $\ell$ is continuous, it follows that $$q : = \inf\limits_{\substack {t \in \Gamma_1 \setminus \{e\} \\ x \in X}} \ell (t, x) > 0.$$ Let $A_k = \frac {1} {q^k} > 0.$ Then for all $t \in \Gamma_1$ we have
\Bea
\left \lvert a_t \left (x_j^{(t)} \right ) \right \rvert &  \leq & \left \|a_t \right \|_{\infty} \\ & \leq & A_k\ \sup_{x \in X} \left \|\delta_t a (\cdot, x) \cdot \ell^{k} (\cdot, x) \right \|_{\ell^{2} (\Gamma)} \\ & \leq & A_k\ \sup_{x \in X} \left \|f (\cdot, x) \cdot \ell^{k} (\cdot, x) \right \|_{\ell^{2} (\Gamma)} \\ & \leq & A_k\ L_{\ell}^{k} (f) \\ & \leq & A_k\ L^{\mathcal K, k} (f) \\ & \leq & A_k.
\Eea
This covers the case $t \neq e.$ For $t = e,$ note that $\sigma (f) = \int_{X} a_e\ d \mu.$ Letting $\mu$ to be Dirac mass at some $x_e \in X,$ it follows that $a \left (x_e \right ) = 0.$ Let $\left \|a_e  \right \|_{\infty} = \left \lvert a_e \left (x_1 \right ) \right \rvert$ for some $x_1 \in X.$ Thus for all $1 \leq j \leq h_0$ we have \Bea \left \lvert a_e \left (x_j^{(e)} \right ) \right \rvert & \leq & \left \|a_e \right \|_{\infty} \\ & = & \left \lvert a_e \left (x_1 \right ) \right \rvert \\ & = & \left \lvert a_e \left (x_1 \right ) - a_e \left (x_e \right ) \right \rvert \\ & \leq & d \left (x_e, x_1 \right ) L \left (a_e \right ) \\ & \leq & \mathrm {diam} (X, d)\ L^{\mathcal K, k} (f) \\ & \leq & \mathrm {diam} (X, d).  \Eea
This covers the case $t = e.$ So the image of $\Phi_{\varepsilon}$ is bounded, as required.

\vspace{2mm}

Now in order to prove the result it is enough to show that the set
$$E_{\mathcal K, k}^{\sigma} : = \left \{f \in C_c (\mathcal G, \omega)\ :\ L^{\mathcal K, k} (f) \leq 1\ \mathrm {and}\ \sigma (f) = 0 \right \}$$ is totally bounded in the reduced norm. Let us get hold of $n_0 \in \mathbb N$ such that $$2^{p} n_0^{p - k} < \frac {\varepsilon} {2 C}.$$ By compactness of $B_{\ell} \left (n_0 \right )$ (compactness follows from properness and continuity of $\ell$) there exists a finite subset $\Gamma' \subseteq \Gamma$ with $\left \lvert \Gamma' \right \rvert = m'$ (say) such that $$B_{\ell} \left (n_0 \right ) \subseteq \bigsqcup\limits_{t \in \Gamma'} K_t.$$ Let $h \in E_{\mathcal K, k}^{\sigma}$ with $h = \sum\limits_{t \in \Gamma} \delta_t b_t.$ Then $$\left \|\sum\limits_{t \in \Gamma \setminus \Gamma'} \delta_t b_t \right \|_{\mathrm {red}} < \frac {\varepsilon} {2}.$$ Consider the set $$E_{\mathcal K, k}^{\sigma} [\Gamma'] : = \left \{f \in C_c (\mathcal G, \omega)\ :\ L^{\mathcal K, k} (f) \leq 1, \sigma (f) = 0\ \mathrm{and}\ \mathrm{supp} (f) \subseteq \bigsqcup\limits_{t \in \Gamma'} K_t \right \}.$$ Then by first part of the proof, it follows that $E_{\mathcal K, k}^{\sigma} [\Gamma']$ is totally bounded with respect to the reduced norm. So every element of $E_{\mathcal K, k}^{\sigma} [\Gamma']$ can be covered by finitely many balls of radius $\frac {\varepsilon} {2}.$ Thus in order to demonstrate that every element of $E_{\mathcal K, k}^{\sigma}$ can be covered by finitely many balls of radius $\varepsilon,$ it is enough to show that for any $f = \sum\limits_{t \in \Gamma} \delta_t a_t \in C_c (\mathcal G, \omega)$ with $L_{\ell}^{k} (f) \leq 1,$ we have $$L_{\ell}^{k} \left (\sum\limits_{t \in \Gamma'} \delta_t a_t \right ) \leq C',$$ for some constant $C' > 0.$
First note that $$L_{\ell}^{k} (g) \leq \left \|M_{\ell}^{k} (g) \right \|_{I},$$ for all $g \in C_c (\mathcal G, \omega).$ So 
\Bea L_{\ell}^{k} \left (\sum\limits_{t \in \Gamma'} \delta_t a_t \right ) & \leq & \max \left \{\sup\limits_{x \in X} \sum\limits_{t \in \Gamma'} \ell (t, x)^{k} \left \lvert a_t (x) \right \rvert, \sup\limits_{x \in X} \sum\limits_{t \in \Gamma'} \ell (t, x)^{k} \left \lvert a_{t^{-1}} (t \cdot x) \right \rvert \right \} \\ & \leq & \sqrt {m'} \max \left \{\sup\limits_{x \in X} \left (\sum\limits_{t \in \Gamma'} \ell (t, x)^{2 k} \left \lvert a_t (x) \right \rvert^{2} \right )^{\frac {1} {2}}, \sup\limits_{x \in X} \left (\sum\limits_{t \in \Gamma'} \ell (t, x)^{2 k} \left \lvert a_{t^{-1}} (t \cdot x) \right \rvert^{2} \right )^{\frac {1} {2}}  \right \} \\ & \leq & \sqrt {m'} \max \left \{\sup\limits_{x \in X} \left (\sum\limits_{t \in \Gamma} \ell (t, x)^{2 k} \left \lvert a_t (x) \right \rvert^{2} \right )^{\frac {1} {2}}, \sup\limits_{x \in X} \left (\sum\limits_{t \in \Gamma} \ell (t, x)^{2 k} \left \lvert a_{t^{-1}} (t \cdot x) \right \rvert^{2} \right )^{\frac {1} {2}}  \right \} \\ & \leq & \sqrt {m'}\ L_{\ell}^{k} (f) \\ & \leq & \sqrt {m'}.
\Eea
This completes the proof.
\end{proof}

Henceforth, we shall call $L^{\mathcal K, k}$ a stratified \clipnorm.
\begin{rem}\label{noleibnitz}
    The sequence of the \clipnorm\ discussed above does not satisfy the Leibnitz property. Leibnitz property is important from the perspective of quantum Gromov-hausdorff propinquity and product entropy. But lack of it does not play any significant role in studying metric dimension.
\end{rem}
\subsection{Bounds of metric dimension for polynomial growth} In this subsection, we obtain bounds of the metric dimension of $2$-cocycle twisted transformation groupoid $C^{\ast}$-algebras admitting a length function of polynomial growth. As usual, let $\Gamma$ be a finitely generated discrete group acting on a compact metric space $(X,d)$. Recall that if a transformation groupoid $\mathcal{G}=\Gamma\rtimes X$ admitting a $2$-cocycle $\omega$ has polynomial growth property with growth exponent $r\geq 1$ for some continuous proper length function $\ell$, then it has the twisted RD-property with exponent $p>\frac{r}{2}$ (Lemma \ref{polytorapid}). Then for any choice of metric stratification $\mathcal{K}$, the pair $(C^{\ast}_{r}(\mathcal{G},\omega),L^{\mathcal{K},k})$ is a CQMS for any $k>\frac{r}{2}$ by Theorem \ref{CQMS}. We shall consider two types of stratifications. We call them static and dynamic stratifications. 
 \subsubsection{Static stratification} \label{static_stratification}  A static metric stratification corresponds to the uniform choice $d$ on each fiber $\{t\}\times X$. We denote the static metric stratification by $\mathcal{K}_{S}$ and the corresponding stratified \clipnorm\ by $L^{\mathcal{K}_{S},k}$ for any $k$. 
\begin{thm} \label{general cocycle bound}
Let $\mathcal{G}$ be the transformation groupoid associated with an action of $\Gamma$ on a compact metric space $(X, d)$ equipped with a normalized $2$-cocycle $\omega : \Gamma \times \Gamma \times X \rightarrow \mathbb T.$ Let $\ell$ be a continuous proper length function on $\mathcal G$ satisfying the condition \eqref{Lip-length}, under which $\mathcal G$ possesses the property of polynomial growth of degree $r \geq 1.$ Let $C_r^{\ast}(\mathcal {G}, \omega)$ be equipped with the static stratified \clipnorm\ $L^{\mathcal{K}_{S}, k}$. Then for all $k > \frac{r}{2}$, we have
\begin{displaymath}\mathrm{Mdim}_{L^{\mathcal{K}_{S},k}} (C_r^{\ast}(\mathcal{G}, \omega)) \leq \frac {2 r} {2 k - r} + M_X \cdot \left (\frac {2 k + r} {2 k - r} \right ), \end{displaymath}
where $M_X = \dim_B (X, d),$ the Kolmogorov dimension of $(X,d)$.
\end{thm}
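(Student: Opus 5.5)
The plan is a truncate-and-discretize argument. Fix $\delta>0$ and $f=\sum_t\delta_t a_t\in\mathcal{L}_1$, so that $\|f\|_{\mathrm{red},\omega}\le 1$ and $L^{\mathcal{K}_S,k}(f)\le 1$. The latter gives $L^k_\ell(f)\le 1$ and $L_X(a_t)\le 1$ for every $t$. I will first cut $f$ down to a finite set of group elements, chosen independently of $x$, and bound the discarded tail using the twisted rapid decay property. I will then approximate each of the finitely many surviving coefficients $a_t$ in sup-norm by a finite-dimensional space built from a partition of unity on $(X,d)$.

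\emph{Step 1 (uniform truncation).} Fix $\eta>0$ small and set $p=\frac r2+\eta$. By Lemma \ref{polytorapid}, $\mathcal G$ has twisted rapid decay with exponent $p$ and some constant $C$. For $n\in\mathbb N$, let $m_0$ be as in Lemma \ref{length-Lip} and put $F_n=\{t\in\Gamma:\ell_\Gamma(t)\le m_0\}$, where $m_0\le\lceil n+CD\rceil$.
\begin{itemize}
\item For $t\notin F_n$ we have $\ell(t,x)>n$ for every $x\in X$.
\item Since $\ell_\Gamma(t^{-1})=\sup_x\ell(t,t^{-1}\cdot x)=\ell_\Gamma(t)$, the set $F_n$ is symmetric.
\item Together with $\ell(t^{-1},t\cdot x)=\ell(t,x)$, this means the "range" part of the norm $\|\cdot\|_{2,p,\ell}$ of the tail $f_{\mathrm{tail}}=\sum_{t\notin F_n}\delta_ta_t$ also only involves points with $\ell>n$.
\end{itemize}
On such points $(1+\ell)^{p}\le 2^p\ell^p\le 2^p n^{p-k}\ell^k$. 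Combining this with the Corollary after Proposition \ref{prop1} gives
\[
\|f_{\mathrm{tail}}\|_{\mathrm{red},\omega}\le C\|f_{\mathrm{tail}}\|_{2,p,\ell}\le C2^pn^{p-k}L^k_\ell(f)\le C2^pn^{p-k}.
\]
I will choose $n\asymp\delta^{-1/(k-p)}$ so that this is below $\delta/2$. Moreover $\ell_\Gamma(t)\le m_0$ forces $\ell(t,x)\le m_0$, so $F_n\subseteq B_x(m_0)$. Polynomial growth then gives $|F_n|\le C(1+n+CD)^r\lesssim\delta^{-r/(k-p)}$.

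\emph{Step 2 (fiberwise discretization).} By \eqref{I-n0rm dominance}, $\|a_t\|_\infty\le\|f\|_\infty\le\|f\|_{\mathrm{red},\omega}\le 1$, and $L_X(a_t)\le 1$. Cover $X$ by $N=N(\delta/(2|F_n|),d)$ balls and take a subordinate partition of unity $\{\rho_j\}$, exactly as in the proof of Theorem \ref{CQMS}. Replacing each $a_t$ by $\sum_j a_t(x_j)\rho_j$ costs at most $\delta/(2|F_n|)$ in sup-norm. Using $\|\delta_t b\|_{\mathrm{red}}\le\|b\|_\infty$, the total error over $t\in F_n$ is at most $\delta/2$. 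Hence $\mathcal{L}_1\subseteq_\delta Z$ with $Z=\mathrm{span}\{\delta_t\rho_j:t\in F_n,\ 1\le j\le N\}$, and so $D(\mathcal{L}_1,\delta)\le|F_n|\,N(\delta/(2|F_n|),d)$.

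\emph{Step 3 (exponent count).} For small $\epsilon$ we have $N(\epsilon,d)\le\epsilon^{-(M_X+\eta)}$. Therefore
\[
\log D\le(1+M_X+\eta)\log|F_n|+(M_X+\eta)\log(2/\delta),
\]
with $\log|F_n|\le\frac{r}{k-p}\log\delta^{-1}+O(1)=\frac{2r}{2k-r-2\eta}\log\delta^{-1}+O(1)$. Dividing by $\log\delta^{-1}$ and letting $\delta\to0$ and then $\eta\to0$ yields
\[
\frac{2r}{2k-r}(1+M_X)+M_X=\frac{2r}{2k-r}+M_X\frac{2k+r}{2k-r}.
\]

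The main obstacle I expect is Step 1. The truncation must be uniform in $x$, so that $Z$ is a genuine finite-dimensional subspace. It must also control the range term $\|\cdot\|_{2,p,r,\ell}$, which involves $a_{t^{-1}}(t\cdot x)$. This is exactly where condition \eqref{Lip-length}, via Lemma \ref{length-Lip}, and the symmetry of $F_n$ enter. A secondary delicate point is that rapid decay is only available for $p>r/2$, which forces the limiting argument in $\eta$.
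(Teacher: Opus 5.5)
Your proposal is correct and takes essentially the same route as the paper. Both truncate at $\ell_\Gamma\le m_0$ via Lemma \ref{length-Lip} and twisted rapid decay, count the surviving group elements by polynomial growth, approximate each coefficient in $C(X)$ to precision $\delta/(2|F_n|)$, and finally let $p\to r/2$. The differences are minor: you build the approximating subspace directly from a partition of unity and the covering numbers $N(\epsilon,d)$, which is the easy half of Kerr's identification $\mathrm{Mdim}_{L_X}(C(X))=\dim_B(X,d)$ that the paper invokes through $D(\mathcal L_1^X,\delta')$, and you justify the tail estimate, including the range term, which the paper only asserts.
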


\begin{proof}
Let $f = \sum\limits_{t \in \Gamma} \delta_t a_t \in C_c(\Gamma, \mathrm {Lip} (X))$ such that $L^{\mathcal{K}_S, k}(f) \leq 1.$ and $\lvert\lvert f\rvert\rvert_{\omega, \mathrm {red}}\leq 1$. We have exactly three simultaneous bounds $:$
\begin{enumerate}
\item $\|f\|_{\omega, \mathrm {red}} \leq 1.$ By inequality \ref{I-n0rm dominance}, this condition ensures that $$\|a_t\|_{\infty} \leq \|f\|_{\infty} \leq \|f\|_{\omega, \mathrm {red}} \leq 1,$$ for all $t \in \Gamma.$
\item $L_{\ell}^k\left (\sum\limits_{t \in \Gamma} \delta_t a_t \right ) \leq 1.$
 \item $L_X(a_t) \leq 1$ for all $t \in \Gamma,$ meaning every coefficient function uniformly belongs to $\mathcal{L}_1^X$, the standard unit Lipschitz ball in $C(X).$
\end{enumerate}
Fix an arbitrary $\delta > 0.$ Since $\mathcal G$ satisfies the property of rapid decay with decay exponent $p > \frac {r} {2},$ fixing some $p \in \left (\frac {r} {2}, k \right ),$ we define $n_0$ to be the \textit{smallest positive integer} such that
\begin{equation} \label{inequality} 2^p C n_0^{p-k} < \frac{\delta}{2}, \end{equation}
where $C > 0$ is the constant arising from the property of rapid decay. Let $m_0$ be the smallest positive integer for which the following implication holds $:$
$$\ell_{\Gamma} (t) > m_0 \implies \ell (t, x) > n_0,$$ for all $x \in X.$ Then, letting $f_{> m_0} : = \sum\limits_{\ell_{\Gamma} (t) > m_0} \delta_t a_t,$ we have
$$\|f_{> m_0}\|_{\omega,\mathrm{red}} = \left\| \sum_{\ell_{\Gamma} (t) > m_0} \delta_t a_t \right\|_{\omega, \mathrm{red}} \le 2^p C n_0^{p-k} < \frac{\delta}{2}.$$
Consequently, any $\frac{\delta}{2}$-approximation of the truncated element $f_{\leq m_0} : = \sum\limits_{\ell_{\Gamma} (t) \leq m_0} \delta_t a_t$ yields a valid $\delta$-approximation of $f.$ Since $n_0$ is defined as the smallest integer satisfying \eqref{inequality}, we have
$$\left( \frac{C \cdot 2^{p+1}}{\delta} \right)^{\frac{1}{k-p}} < n_0 \le \left( \frac{C \cdot 2^{p+1}}{\delta} \right)^{\frac{1}{k-p}} + 1.$$
Therefore, by the virtue of Lemma \ref{length-Lip} it follows that, for sufficiently small $\delta > 0$ 
\begin{equation} \label{bound for m_0} m_0 \leq \left( \frac{C \cdot 2^{p+1}}{\delta} \right)^{\frac{1}{k-p}} + 2 + CD < 2 \left( \frac{C \cdot 2^{p+1}}{\delta} \right)^{\frac{1}{k-p}} = 2 A \delta^{- \frac {1} {k - p}}, \end{equation}
where $A : = \left (C \cdot 2^{p + 1} \right )^{\frac {1} {k - p}}.$

Since $\Gamma$ has polynomial growth of degree $r \geq 1$ with respect to $\ell_{\Gamma},$ the cardinality $V \left (m_0 \right )$ of the length ball $B_{\ell_{\Gamma}} (m_0)$ is bounded by $K m_0^{r}$ for some constant $K > 0$. We define $\delta'$ as follows $:$
$$\delta' := \frac{\delta}{2 K m_0^{r}}.$$
Let $\mathcal L_1^{X} : = \left \{a \in C(X)\ :\ L(a) \leq 1, \|a\|_{\infty} \leq 1 \right \}.$
By definition, for this specific $\delta' > 0$, there exists finite-dimensional $Z \subseteq C(X)$ which $\delta'$-approximate $\mathcal{L}_1^X$ and
$$\dim(Z) = D(\mathcal{L}_1^X, \delta').$$
Furthermore, $a_t \in \mathcal{L}_1^X$. We can therefore choose elements $z_t \in Z$ satisfying
$$\|a_t - z_t\|_{\infty} < \delta'.$$
Consider the finite-dimensional subspace $W = \mathrm{span}\{ y \cdot h \mid y \in B_{\ell_{\Gamma}} \left (m_0 \right ), h \in Z \} \subseteq C_r^*(\mathcal{G}, \omega).$ Then
$$\dim(W) \leq V \left (m_0 \right ) \cdot \dim(Z) \leq K m_0^{r} \cdot D(\mathcal{L}_1^X, \delta').$$ Then we have
\Bea
    \left\|f_{\le m_0} - \sum\limits_{\ell_{\Gamma} (t) \leq m_0} \delta_t z_t \right\|_{\omega,\mathrm{red}} & \leq & \sum\limits_{\ell_{\Gamma} (t) \le m_0} \left \|a_t - z_t \right \|_{\infty} \\
    & \leq & V(m_0) \delta' \\ & \leq & K m_0^{r} \delta' \\ & = & \frac {\delta} {2}.
\Eea    
Combining this with the tail bound, the subspace $W$ perfectly $\delta$-approximates the unit Lip-ball $\mathcal L_1^{\mathcal{K_S}, k}.$ Thus, $D(\mathcal {L}_1^{\mathcal{K_S}, k}, \delta) \le \dim(W)$. Taking the logarithm and dividing both sides by $\log \delta^{-1},$ we have
\bea \label{eqn:dim-bound}
\frac{\log D(\mathcal{L}_1^{\mathcal{K_S}, k}, \delta)}{\log \delta^{-1}} & \leq & \frac{\log K m_0^{r}}{\log \delta^{-1}} + \frac{\log D(\mathcal{L}_1^X, \delta')}{\log \delta^{-1}} \nonumber \\
& = & \frac{\log K} {\log \delta^{-1}} + r\ \frac{\log m_0}{\log \delta^{-1}} + \frac{\log D(\mathcal{L}_1^X, \delta')}{\log \delta'^{-1}} \cdot \frac{\log \delta'^{-1}}{\log \delta^{-1}}
\eea
Let us now evaluate the limit supremum as $\delta \to 0^+.$ Taking logarithms in both sides of \eqref{bound for m_0} we have
$$\log m_0 \leq \log(2A) + \frac{1}{k-p} \log \delta^{-1}.$$
Substituting this into the expansion of $\log \delta'^{-1} = \log(2 K) + \log \delta^{-1} + r \log m_0$ yields
\Bea
\log \delta'^{-1} & \leq & \log(2 K) + \log \delta^{-1} + r \left[ \log(2A) + \frac{1}{k-p} \log \delta^{-1} \right] \\
& = & B + \left( 1 + \frac{r} {k - p} \right) \log \delta^{-1},
\Eea
where $B : = \log(2 K) + r \log(2A)$ is a constant independent of $\delta,$ since so are $K$ and $A.$ Dividing both sides by $\log \delta^{-1}$ and taking the limit supremum as $\delta \to 0^+$ we thus have
$$\limsup\limits_{\delta \to 0^{+}} \frac {\log m_0} {\log \delta^{-1}} \leq \frac {1} {k - p},$$ and,
$$\limsup_{\delta \to 0^+} \frac{\log \delta'^{-1}}{\log \delta^{-1}} \le 1 + \frac{r} {k - p}.$$
Finally, as $\delta \to 0^+$, we necessarily have $\delta' \to 0^+.$
Substituting these evaluated limits in \eqref{eqn:dim-bound}, we can conclude that
$$\mathrm{Mdim}_{L^{\mathcal{K_S}, k}}(C_r^*(\mathcal{G}, \omega)) \leq \frac {r} {k - p} + M_X \cdot \left( 1 + \frac{r} {k - p} \right).$$
Since this holds for any $p > \frac {r} {2},$ letting $p \to \frac {r} {2}^{+},$ the desired result follows.
\end{proof}

\begin{thm}\label{betterbound}
Let $\mathcal{G}$ be the transformation groupoid associated with an action of $\Gamma$ on a compact metric space $(X, d)$ equipped with a normalized $2$-cocycle $\sigma : \Gamma \times \Gamma \rightarrow \mathbb T.$ Let $\ell$ be a continuous proper length function on $\mathcal G$ satisfying the condition \eqref{Lip-length}, under which $\mathcal G$ possesses the property of polynomial growth of degree $r \geq 1.$ Let $C_r^{\ast}(\mathcal {G}, \sigma)$ be equipped with the static stratified \clipnorm\ $L^{\mathcal{K}_{S}, k}$. Then for all $k > \frac{r}{2}$, we have
\begin{displaymath}\mathrm{Mdim}_{L^{\mathcal{K}_{S},k}} (C_r^{\ast}(\mathcal{G}, \sigma)) \leq  \frac {4 k + r} {2 k - r} \left( M_{\Gamma} + M_X \right), \end{displaymath}
where $M_{\Gamma}  = \mathrm {Mdim}_{L_{\ell_{\Gamma}}^{k}} \left (C_r^{\ast} (\Gamma, \sigma) \right )$ and $M_X = \dim_B (X, d).$ 
\end{thm}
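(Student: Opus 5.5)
The plan is to combine the truncation argument from the proof of Theorem \ref{general cocycle bound} with a "separation of variables". Because $\sigma$ depends only on the group variables, elements of $C_c(\Gamma,\mathrm{Lip}(X))$ can be written as finite sums $\sum_j g_j\rho_j$ with $g_j\in C_c(\Gamma,\sigma)\subseteq C_r^{\ast}(\Gamma,\sigma)$ and $\rho_j\in C(X)$. In $C_r^{\ast}(\mathcal G,\sigma)$ the products $g\rho$ make sense, and $\|g\rho\|_{\sigma,\mathrm{red}}\leq\|g\|_{C_r^{\ast}(\Gamma,\sigma)}\|\rho\|_\infty$.

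We will then approximate the unit ball $\mathcal L_1^{\mathcal K_S,k}$ by a subspace of the form $\mathrm{span}\{z\rho_j : z\in Z_\Gamma,\ 1\le j\le N\}$. Here $Z_\Gamma$ is an optimal subspace approximating the unit ball $\mathcal L_1^{\Gamma}$ of $(C_r^{\ast}(\Gamma,\sigma),L^k_{\ell_\Gamma})$, and $\{\rho_j\}$ is a partition of unity subordinate to a minimal cover of $X$ by $\eta$-balls. The dimension is therefore at most $N(\eta,d)\cdot D(\mathcal L_1^\Gamma,\delta'')$, and the logarithm splits into an $M_X$ part and an $M_\Gamma$ part.

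\textbf{Step 1 (truncation).} Fix $p\in(\frac r2,k)$ and $f=\sum_t\delta_t a_t\in\mathcal L_1^{\mathcal K_S,k}$. Choose $n_0,m_0$ exactly as in the proof of Theorem \ref{general cocycle bound}, so that $\|f_{>m_0}\|<\delta/3$ and $m_0\le 2A\delta^{-1/(k-p)}$ by Lemma \ref{length-Lip}. Only $f_{\le m_0}$, supported on $B_{\ell_\Gamma}(m_0)$ with $V(m_0)\le Km_0^r$, needs to be handled.

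\textbf{Step 2 (discretizing $X$).} Let $x_1,\dots,x_N$ be the centres of the $\eta$-cover and put $g_j:=\sum_{\ell_\Gamma(t)\le m_0}a_t(x_j)\delta_t$. Since $L_X(a_t)\le1$, the pointwise estimate from the proof of Theorem \ref{CQMS} gives
\[
\Bigl\|f_{\le m_0}-\sum_j g_j\rho_j\Bigr\|\le\sum_{\ell_\Gamma(t)\le m_0}\Bigl\|a_t-\sum_j a_t(x_j)\rho_j\Bigr\|_\infty\le Km_0^r\,\eta .
\]
So we take $\eta=\delta/(3Km_0^r)$.

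\textbf{Step 3 (the $g_j$ are uniformly in a dilate of $\mathcal L_1^\Gamma$).} This is the main obstacle. We need bounds $\|g_j\|\le c_1$ and $L^k_{\ell_\Gamma}(g_j)\le c_2$ that are uniform in $j$ and $\delta$, or at worst polynomial in $m_0$.

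For the Lip bound, I would first use $L^k_{\ell_\Gamma}(g)\le\|M^k_{\ell_\Gamma}g\|_I$. Then Cauchy--Schwarz together with the summability $\sum_t(1+\ell_\Gamma(t))^{-2(p'-k)}$ reduces it to the weighted $\ell^2$ quantity $\bigl(\sum_t\ell_\Gamma(t)^{2k'}|a_t(x_j)|^2\bigr)^{1/2}$, for a slightly shifted exponent. By condition \eqref{Lip-length} we have $\ell_\Gamma(t)\le\ell(t,x_j)+CD$. Hence this quantity is dominated by the corresponding groupoid quantity, which in turn is bounded by a constant times $L^k_\ell(f)+\|f\|$ via the Corollary following Proposition \ref{prop1}.

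If the loss of exponent cannot be avoided, the fallback is a bound polynomial in $m_0$: the crude estimate $\|g_j\|\le V(m_0)$, and similarly $L^k_{\ell_\Gamma}(g_j)\lesssim m_0^{k}V(m_0)$. I expect the extra $m_0$-powers produced by this fallback to be exactly what accounts for the factor $\frac{4k+r}{2k-r}$ in place of $\frac{2k+r}{2k-r}$. So with $c:=\max(c_1,c_2)$, we get $g_j/c\in\mathcal L_1^\Gamma$.

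\textbf{Step 4 (approximating the $g_j$ and counting).} Pick $z_j\in cZ_\Gamma$ with $\|g_j-z_j\|<c\delta''$. Using $\rho_j\ge0$ and $\|\rho_j\|_\infty\le1$, the triangle inequality gives $\bigl\|\sum_j(g_j-z_j)\rho_j\bigr\|\le Nc\delta''$, so we choose $\delta''=\delta/(3Nc)$.

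Then
\[
D(\mathcal L_1^{\mathcal K_S,k},\delta)\le N(\eta,d)\,D(\mathcal L_1^\Gamma,\delta'').
\]
Take logarithms and divide by $\log\delta^{-1}$, as in \eqref{eqn:dim-bound}. Using $\log m_0\lesssim\frac1{k-p}\log\delta^{-1}$, we get $\log\eta^{-1}\lesssim(1+\frac r{k-p})\log\delta^{-1}$. Also $\log\delta''^{-1}\lesssim\log\delta^{-1}+\log N+\log c$, which picks up the term $M_X(1+\frac{r}{k-p})\log\delta^{-1}$ together with the $m_0$-growth of $c$.

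Bounding both ratios by the common factor $1+\frac{r+k}{k-p}$ and letting $p\to\frac r2^{+}$ should yield $\frac{4k+r}{2k-r}(M_\Gamma+M_X)$. I would verify this last bookkeeping carefully once the exact power of $m_0$ in $c$ from Step 3 is fixed.
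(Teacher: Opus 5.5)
\textbf{Gap in the final bookkeeping.} Steps 1--3 are sound once you rely on the fallback in Step 3. Your main route there does not close: the Cauchy--Schwarz step needs $\sum_t(1+\ell_\Gamma(t))^{-2s}<\infty$ with $s>\frac r2$, which pushes the weight exponent up to $k+s>k$, and $L^k_\ell(f)\le 1$ gives no control of that. The crude bounds $\|g_j\|\le V(m_0)$ and $L^k_{\ell_\Gamma}(g_j)\le m_0^kV(m_0)$ are valid, though, and give $c\lesssim m_0^{k+r}$.

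The genuine gap is in Step 4, and it has nothing to do with the power of $m_0$ in $c$. You approximate each of the $N=N(\eta,d)$ fibre elements $g_j$ independently, so the errors add over $j$ and force $\delta''=\delta/(3Nc)$. Hence
\[
\log\delta''^{-1}=\log\delta^{-1}+\log N(\eta,d)+\log c+O(1).
\]
Since $\eta=\delta/(3Km_0^r)$ and $m_0\lesssim\delta^{-1/(k-p)}$, the term $\log N(\eta,d)$ contributes up to $M_X\bigl(1+\frac{r}{k-p}\bigr)\log\delta^{-1}$. So the ratio $\log\delta''^{-1}/\log\delta^{-1}$ cannot be bounded by $1+\frac{k+r}{k-p}$, as you assert at the end. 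Carried through correctly, your scheme only yields
\[
M_X\Bigl(1+\tfrac{r}{k-p}\Bigr)+M_\Gamma\Bigl(1+\tfrac{k+r}{k-p}\Bigr)+M_\Gamma M_X\Bigl(1+\tfrac{r}{k-p}\Bigr).
\]
The cross term is not absorbed by $\frac{4k+r}{2k-r}(M_\Gamma+M_X)$. After $p\to\frac r2^{+}$, absorbing it would require $M_\Gamma\le\frac{2k}{2k+r}$ whenever $M_X>0$. This fails, for instance, for $k=r=1$, where the paper's lower bound for twisted group algebras gives $M_\Gamma\ge 1$. The sharper row--column estimate $\bigl\|\sum_j h_j\rho_j\bigr\|\le\bigl(\sum_j\|h_j\|^2\bigr)^{1/2}$ only halves the cross term; it does not remove it.

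\textbf{The missing idea.} You need to decouple the two approximations, so that errors are summed only over $t\in B_{\ell_\Gamma}(m_0)$ and never over the pieces of $X$. Approximate the group elements $\delta_t$ rather than the $g_j$.

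For $\ell_\Gamma(t)\le m_0$ you have $\delta_t/m_0^k\in\mathcal L_1^\Gamma$. Choose $y_t$ in an optimal subspace $Y$ with $\|\delta_t-m_0^ky_t\|<m_0^k\delta'$, once, and use the same $y_t$ for every $j$. Put $b_t=\sum_j a_t(x_j)\rho_j$, so $\|b_t\|_\infty\le1$. Then $\bigl\|\sum_t(\delta_t-m_0^ky_t)b_t\bigr\|\le V(m_0)m_0^k\delta'$, which is independent of $N$. Moreover $\sum_t m_0^ky_tb_t$ lies in $\mathrm{span}\{y\rho_j: y\in Y,\ 1\le j\le N\}$.

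With $\delta'=\delta/(3Km_0^{k+r})$ this gives $M_X\bigl(1+\frac{r}{k-p}\bigr)+M_\Gamma\bigl(1+\frac{k+r}{k-p}\bigr)$, which implies the theorem. This term-by-term approximation is exactly what the paper does: $\delta_ta_t\approx(m_0^ky_t)z_t$. The paper uses $z_t$ from an optimal subspace for $\mathcal L_1^X$ instead of your partition of unity. It also uses a single scale $\delta'=\delta/(6Km_0^{k+r})$ for both factors, which is why both $M_\Gamma$ and $M_X$ there carry the factor $1+\frac{k+r}{k-p}$.
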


\begin{proof}
Let $f = \sum\limits_{t \in \Gamma} \delta_t f_t \in C_c(\Gamma, \mathrm {Lip} (X))$ such that $L^{\mathcal{K}_S, k}(f) \leq 1.$ Fix some $p \in \left (\frac {r} {2}, k \right )$ and a $\delta > 0$ sufficiently small. Then adapting the same technique as in the proof of Theorem \ref{general cocycle bound}, we can get hold of some $m_0 \in \mathbb N$ such that
$$\left \|f_{> m_0} \right \|_{\sigma, \mathrm {red}} = \left \|\sum\limits_{\ell_{\Gamma} (t) > m_0} \delta_t a_t \right \|_{\sigma, \mathrm {red}} < \frac {\delta} {2},$$ with
\begin{equation} \label{upper bound for m_0} m_0 < 2 A \delta^{- \frac {1} {k - p}}, \end{equation}
where $A : = \left (C \cdot 2^{p + 1} \right )^{\frac {1} {k - p}}.$ Consequently, any $\frac{\delta}{2}$-approximation of the truncated element $f_{\leq m_0} : = \sum\limits_{\ell_{\Gamma} (t) \leq m_0} \delta_t a_t$ yields a valid $\delta$-approximation of $f.$

\vspace{0.5mm}

Since $\Gamma$ has polynomial growth of degree $r \geq 1$ with respect to $\ell_{\Gamma},$ the cardinality $V \left (m_0 \right )$ of the length ball $B_{\ell_{\Gamma}} (m_0)$ is bounded by $K m_0^{r}$ for some constant $K > 0.$ We assume without loss of generality that $\delta < 1$ and $K > 1$ to ensure that $\frac{\delta}{6 K m_0^{k + r}} < 1$. We define $\delta'$ as follows $:$
$$\delta' := \frac{\delta}{6 K m_0^{k + r}}.$$
Let $$\mathcal L_1^{\Gamma} : = \left \{f \in C_c (\Gamma, \sigma)\ :\ L_{\ell_{\Gamma}}^{k} (f) \leq 1, \|f\|_{\mathrm {red}} \leq 1 \right \},$$
and, $$\mathcal L_1^{X} : = \left \{a \in C(X)\ :\ L(a) \leq 1, \|a\|_{\infty} \leq 1 \right \}.$$
By definition, for this specific $\delta' > 0$, there exist finite-dimensional subspaces $Y \subseteq C_r^*(\Gamma, \sigma)$ and $Z \subseteq C(X)$ that $\delta'$-approximate $\mathcal{L}_1^{\Gamma}$ and $\mathcal{L}_1^X$ respectively, satisfying
$$\dim(Y) = D(\mathcal{L}_1^{\Gamma}, \delta') \quad \text{and} \quad \dim(Z) = D(\mathcal{L}_1^X, \delta').$$
For each $t \in \Gamma$ with $\ell_{\Gamma} (t) \le m_0$, we have $L_{\ell_{\Gamma}}^k(\delta_t) = (\ell_{\Gamma} (t))^k \le m_0^k$, which implies $\frac{\delta_t}{m_0^k} \in \mathcal{L}_1^{\Gamma}.$ Furthermore, $a_t \in \mathcal{L}_1^X$. We can therefore choose elements $y_t \in Y$ and $z_t \in Z$ satisfying
$$\|\delta_t - m_0^k y_t\|_{\sigma, \mathrm{red}} < m_0^k \delta' \quad \text{and} \quad \|a_t - z_t\|_{\infty} < \delta'.$$
Consider the finite-dimensional subspace $W = \mathrm{span}\{ y \cdot h \mid y \in Y, h \in Z \} \subseteq C_r^*(\mathcal{G}, \sigma)$. Then
$$\dim(W) \le \dim(Y) \cdot \dim(Z) = D(\mathcal{L}_1^{\Gamma}, \delta') \cdot D(\mathcal{L}_1^X, \delta').$$
Approximating the $t$-th term $\delta_t a_t$ using $(m_0^k y_t) z_t \in W,$ the triangle inequality yields
\Bea
    \| \delta_t a_t - (m_0^k y_t) z_t \|_{\sigma, \mathrm{red}} & \leq & \| \delta_t a_t - \delta_t z_t \|_{\sigma, \mathrm{red}} + \|\delta_t z_t - (m_0^k y_t) z_t \|_{\sigma, \mathrm{red}} \\
    & = & \| \delta_t (a_t - z_t) \|_{\mathrm{red}} + \| (\delta_t - m_0^k y_t) z_t \|_{\mathrm{red}}.
\Eea
Since $\delta_t$ is unitary, the first term is bounded by $\|a_t - z_t\|_{\infty} < \delta'$. For the second term, since $\|a_t\|_{\infty} \le 1$, the triangle inequality ensures $\|z_t\|_{\infty} \le \|a_t\|_{\infty} + \|a_t - z_t\|_{\infty} < 1 + \delta'$. Substituting these bounds gives
$$\| (\delta_t - m_0^k y_t) z_t \|_{\sigma, \mathrm{red}} \le \| \delta_t - m_0^k y_t \|_{\sigma, \mathrm{red}} \| z_t \|_{\infty} < (m_0^k \delta')(1 + \delta').$$
Summing over all $t \in \Gamma$ with $\ell_{\Gamma} (t) \leq n_0$ we have
\Bea
    \left\| f_{\le m_0} - \sum_{\ell_{\Gamma} (t) \leq m_0} m_0^k y_t z_t \right\|_{\sigma, \mathrm{red}} & \leq & \sum_{\ell_{\Gamma} (t) \le m_0} \Big[ \delta' + m_0^k \delta'(1 + \delta') \Big] \\
    & < & V(m_0) \Big[ \delta' + m_0^k \delta'(1 + \delta') \Big].
\Eea
Since $\delta' < 1$, the bracketed term is strictly bounded by $3 m_0^k \delta'.$ Thus 
$$V(m_0) \left( 3 m_0^k \delta' \right) \le K m_0^{r} \left( 3 m_0^k \frac{\delta}{6 K m_0^{k + r}} \right) = \frac{\delta}{2}.$$
Combined with the tail bound, the subspace $W$ perfectly $\delta$-approximates the unit Lip-ball $\mathcal L_1^{\mathcal{K}_S, k}$. Thus, $D(\mathcal {L}_1^{\mathcal{K}_S, k}, \delta) \le \dim(W)$. Taking the logarithm and dividing both sides by $\log \delta^{-1},$ we have
\bea \label{eq:dim-bound}
\frac{\log D(\mathcal{L}_1^{\mathcal{K}_S, k}, \delta)}{\log \delta^{-1}} & \leq & \frac{\log D(\mathcal{L}_1^{\Gamma}, \delta')}{\log \delta^{-1}} + \frac{\log D(\mathcal{L}_1^X, \delta')}{\log \delta^{-1}} \nonumber \\
& = & \left[ \frac{\log D(\mathcal{L}_1^{\Gamma}, \delta')}{\log \delta'^{-1}} + \frac{\log D(\mathcal{L}_1^X, \delta')}{\log \delta'^{-1}} \right] \frac{\log \delta'^{-1}}{\log \delta^{-1}}.
\eea
Let us now evaluate the limit supremum as $\delta \to 0^+$. Taking logarithms in both sides of \eqref{upper bound for m_0}, we have
$$\log m_0 \leq \log(2A) + \frac{1}{k-p} \log \delta^{-1}.$$
Substituting this into the expansion of $\log \delta'^{-1} = \log(6K) + \log \delta^{-1} + (k + r) \log m_0$ yields
\Bea
\log \delta'^{-1} & \leq & \log(6 K) + \log \delta^{-1} + (k + r) \left[ \log(2A) + \frac{1}{k-p} \log \delta^{-1} \right] \\
& = & B + \left( 1 + \frac{k + r}{k - p} \right) \log \delta^{-1},
\Eea
where $B : = \log(6K) + (k + r)\log(2A)$ is a constant independent of $\delta$. Dividing by $\log \delta^{-1}$ and taking the limit supremum as $\delta \to 0^+,$ we have
$$\limsup_{\delta \to 0^+} \frac{\log \delta'^{-1}}{\log \delta^{-1}} \le 1 + \frac{k + r}{k - p}.$$
Finally, as $\delta \to 0^+$, we necessarily have $\delta' \to 0^+,$ Then each summand in \eqref{eq:dim-bound} evaluates to
$$\limsup\limits_{\delta' \to 0^+} \frac{\log D(\mathcal{L}_1^{\Gamma}, \delta')}{\log \delta'^{-1}} = M_{\Gamma},$$
and,
$$\limsup\limits_{\delta' \to 0^+} \frac{\log D(\mathcal{L}_1^X, \delta')}{\log \delta'^{-1}} = M_X.$$
Substituting these evaluated limits in \eqref{eq:dim-bound}, we can conclude that
$$\mathrm{Mdim}_{L^{\mathcal{K}_S, k}}(C_r^*(\mathcal{G}, \sigma)) \le \left( M_{\Gamma} + M_X \right) \left( 1 + \frac{k + r}{k - p} \right ).$$
Since this holds for any $p > \frac {r} {2},$ letting $p \to \frac {r} {2}^{+},$ the desired result follows.
\end{proof}
Now we shall give a lower bound of the metric dimension for the twisted transformation groupoid $C^{\ast}$-algebra. To establish lower bound we are going to use the following theorem due to Voiculescu (\cite{Voiculescu}*{Lemma 7.8}).
\begin{thm}(Voiculescu's Lemma)
\label{Voiculescu} Let $X$ be an orthonormal set in a Hilbert space $H$. Then for any $\delta>0$, $D(X,\delta)\geq(1-\delta^{2})\ {\mathrm {Card}}(X).$
\end{thm}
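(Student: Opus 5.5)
The plan is to compare the dimension of an approximating subspace with the trace of its orthogonal projection. Since $D(X,\delta)$ is an infimum over subspaces $Z\in\mathcal{F}(H)$ with $X\subseteq_{\delta}Z$, it suffices to fix one such $Z$ and prove $\dim Z\geq(1-\delta^{2})\,\mathrm{Card}(F)$ for every finite subset $F\subseteq X$. Taking the supremum over $F$ then gives the claim, including the case of infinite $X$, where the bound reads $D(X,\delta)=+\infty$ for $\delta<1$. For $\delta\geq 1$ the right-hand side is nonpositive and there is nothing to prove, so assume $0<\delta<1$.

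\textbf{Step 1: reduce to the projection.} Let $P$ be the orthogonal projection of $H$ onto the finite-dimensional (hence closed) subspace $Z$. For $x\in X$, the hypothesis gives some $z\in Z$ with $\lVert x-z\rVert<\delta$. Since $Px$ is the closest point of $Z$ to $x$, we get $\lVert x-Px\rVert\leq\lVert x-z\rVert<\delta$. Using $\lVert x\rVert=1$ and Pythagoras, $\langle Px,x\rangle=\lVert Px\rVert^{2}=1-\lVert x-Px\rVert^{2}>1-\delta^{2}$.

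\textbf{Step 2: the trace estimate.} Extend the orthonormal family $F$ to an orthonormal basis $\{e_i\}$ of $H$. The projection $P$ is positive and of finite rank, so it is trace class with $\mathrm{Tr}(P)=\dim Z$. Because every diagonal term $\langle Pe_i,e_i\rangle$ is nonnegative,
\[
\dim Z=\mathrm{Tr}(P)=\sum_{i}\langle Pe_i,e_i\rangle\geq\sum_{x\in F}\langle Px,x\rangle>(1-\delta^{2})\,\mathrm{Card}(F).
\]
Taking the infimum over admissible $Z$ and the supremum over finite $F\subseteq X$ yields $D(X,\delta)\geq(1-\delta^{2})\,\mathrm{Card}(X)$.

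\textbf{Main obstacle.} The only delicate point is Step 2: using $\mathrm{Tr}(P)=\dim Z$ together with the fact that the diagonal of a positive operator, summed over part of an orthonormal basis, is bounded above by the full trace. This can be avoided if preferred. Pick an orthonormal basis $u_1,\dots,u_n$ of $Z$ and write $\sum_{x\in F}\lVert Px\rVert^{2}=\sum_{x\in F}\sum_{j}\lvert\langle x,u_j\rangle\rvert^{2}$. Exchanging the sums and applying Bessel's inequality to each $u_j$ bounds this by $\sum_{j}\lVert u_j\rVert^{2}=n$. The rest of the argument is routine.
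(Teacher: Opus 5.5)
Your argument is correct. The paper does not prove this lemma itself but cites it from Voiculescu (Lemma 7.8), and your proof is essentially the standard argument given there: you bound $\dim Z=\mathrm{Tr}(P)$ from below by $\sum_{x\in F}\langle Px,x\rangle=\sum_{x\in F}\lVert Px\rVert^{2}>(1-\delta^{2})\,\mathrm{Card}(F)$, where $P$ is the projection onto an approximating subspace, and your Bessel-inequality variant and treatment of $\delta\geq 1$ and infinite $X$ are also fine.
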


\begin{thm}  \label{Mdim lower bound}
Let $\mathcal{G}$ be the transformation groupoid associated with an action of $\Gamma$ on a compact metric space $(X, d)$ equipped with a normalized $2$-cocycle $\omega : \Gamma \times \Gamma \times X \rightarrow \mathbb T.$ Let $\ell$ be a continuous proper length function on $\mathcal G$ satisfying the condition \eqref{Lip-length}, under which $\mathcal G$ possesses the property of rapid decay with decay exponent $p > 0.$ Let $C_r^{\ast}(\mathcal {G}, \omega)$ be equipped with the static stratified \clipnorm\ $L^{\mathcal{K}_{S}, k}.$ Then for all $k > p,$ we have
$$\mathrm {Mdim}_{L^{\mathcal K_S, k}} \left (C_r^{\ast} (\mathcal G, \omega) \right) \geq \dim_B (X) + \frac{1}{k}.$$
\end{thm}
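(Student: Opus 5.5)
Fix small $\delta>0$. The plan is to exhibit, inside $\mathcal{L}_1=\{f: L^{\mathcal K_S,k}(f)\le 1,\ \|f\|_{\omega,\mathrm{red}}\le 1\}$, a large family of elements whose images in the GNS space $L^2(\tau)$ are (a multiple of) an orthonormal set, and then apply Voiculescu's Lemma (Theorem \ref{Voiculescu}). Note that $\|\Lambda_\tau(a)[\delta_e1]-\Lambda_\tau(b)[\delta_e1]\|_{2,\tau}\le\|a-b\|_{\omega,\mathrm{red}}$. Hence if $\mathcal{L}_1\subseteq_\delta Z$ in $C^*_r(\mathcal G,\omega)$, then the image of $\mathcal L_1$ is $\delta$-approximated by the image of $Z$, whose dimension is at most $\dim Z$. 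Consequently $D(\mathcal L_1,\delta)\ge D(\Lambda_\tau(\mathcal L_1)[\delta_e1],\delta)$, computed in $L^2(\tau)$.

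The family will be of the form $\{c\,\delta_t a_j\}$, where $t$ ranges over a set of group elements with $\ell_\Gamma(t)\le n$ and the $a_j$ range over a $\mu$-orthonormal family in $\mathrm{Lip}(X)$. By Lemma \ref{Voiculescu_orthonormal}, the vectors $\Lambda_\tau(\delta_ta_j)[\delta_e1]$ are orthonormal.

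\textbf{Step 1 (the factors from $X$).} Take $N\approx N(\delta,d)$ points that are $2\delta$-separated and form Lipschitz bumps $\phi_j(x)=\max(0,\delta-d(x,x_j))$. These have disjoint supports, Lipschitz constant $1$, and sup norm $\delta$. Choose $\mu$ to be the normalized sum of Dirac masses at the $x_j$ (or, more safely, a measure giving each bump equal mass). Then $\|\phi_j\|_{L^2(\mu)}=\delta N^{-1/2}$, so the $\phi_j$ are orthogonal with this common norm. This part recovers $\dim_B(X)$; it is Kerr's argument.

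\textbf{Step 2 (the factors from $\Gamma$).} For $t$ with $\ell_\Gamma(t)\le n$, the element $g=\delta_t\phi_j$ satisfies $L^k_\ell(g)\le\|M_\ell^kg\|_I\le n^k\delta$ and $L_X(\phi_j)\le1$. Its reduced norm is at most $\delta$. So $g/\max(1,n^k\delta)$ lies in $\mathcal L_1$. Choose $n\approx\delta^{-1/k}$ so that $n^k\delta\approx1$. Since $\ell$ is proper and unbounded (we assume $\Gamma$ is infinite), $|B_{\ell_\Gamma}(n)|\ge n$, because $\ell_\Gamma$ increases by at most a bounded amount along generators (Lemma \ref{length-Lip} combined with subadditivity). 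This gives at least $\sim n\approx\delta^{-1/k}$ group elements.

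\textbf{Step 3 (counting).} We now have $\gtrsim \delta^{-1/k}N(\delta,d)$ vectors in $\Lambda_\tau(\mathcal L_1)[\delta_e1]$. They are mutually orthogonal, each of norm $\delta N^{-1/2}$. Rescaling, $D(S,\delta)=D(S/\lambda,\delta/\lambda)$. Therefore Voiculescu's Lemma applies only if $\delta/\lambda<1$, where $\lambda=\delta N^{-1/2}$.

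This normalization is the main obstacle. Using Dirac-type measures makes the vectors too short. The remedy I would try first is to use instead the unnormalized family at scale $\delta_0=c\delta$ and to compare with a coarser scale $\eta$. Concretely, take bumps of height $\eta$ and radius $\eta$ with $\eta=K\delta$, and take $\mu$ to be a uniform measure on their centres, so that $\mu$ is a probability measure. Then each vector has $L^2(\tau)$-norm $\eta N(\eta)^{-1/2}$. This is still small, which suggests replacing $\tau$ by a $C(X)$-valued inner product: working in the Hilbert module $L^2(\mathcal G,\omega)$ with the sup-over-$x$ norm, the vectors $\delta_t\phi_j$ have norm $\eta$ and are "orthogonal" in disjoint fibres. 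I would then apply Voiculescu's Lemma fibrewise, at each point $x_j$, via $\lambda^\omega_{x_j}$, and handle the growth factor $\delta^{-1/k}$ together with the $N(\eta)$ factor. Failing a clean fibrewise combination, I would at least obtain the two bounds $\dim_B(X)$ and $1/k$ separately and aim to combine them through a product-type estimate. Finally, I would take $\log$, divide by $\log\delta^{-1}$, and take the $\limsup$ to obtain $\dim_B(X)+\tfrac1k$.
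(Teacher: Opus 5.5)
There is a genuine gap, at exactly the point you flag in Step 3, and it is not resolved. Your framework matches the paper: the contraction $\|\Lambda_\tau(a)[\delta_e1]\|_{2,\tau}\le\|a\|_{\omega,\mathrm{red}}$, Lemma \ref{Voiculescu_orthonormal}, Voiculescu's lemma, and $t$ ranging over $B_{\ell_\Gamma}(\delta^{-1/k})$. The problem is Step 1. With disjointly supported bumps $\phi_j$ and $\mu$ uniform on the centres, the orthogonal vectors $\Lambda_\tau(\delta_t\phi_j)[\delta_e1]$ have length $\delta N^{-1/2}$, far below the tolerance $\delta$. So Voiculescu's lemma says nothing and the $\dim_B(X)$ term is lost; in particular, Step 1 is not Kerr's argument.

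Your suggested remedies do not repair this. At a single fibre $x_j$ only the elements $\delta_t\phi_j$ survive, so fibrewise you see the $\Gamma$-count but not $N$. To see $N$ you must aggregate fibres, and the module norm $\sup_x(\sum_t|\xi(t,x)|^2)^{1/2}$ aggregates by a supremum, where Voiculescu's lemma is false. Indeed, the $N$ standard basis vectors of $\ell^\infty_N$ are $\tfrac12$-approximated by an $m$-dimensional subspace with $m=O(\log N)$: choose unit vectors $a_1,\dots,a_N\in\mathbb{R}^m$ with $|\langle a_i,a_j\rangle|\le\tfrac14$ for $i\neq j$; then $(\langle a_i,a_j\rangle)_i$ lies in $\{(\langle a_i,c\rangle)_i : c\in\mathbb{R}^m\}$ and is within $\tfrac14$ of $e_j$. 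Aggregating by an $\ell^2$-average instead is just $\tau$ again, with the same $N^{-1/2}$ loss. Proving the two bounds separately only gives $\max\{\dim_B(X),1/k\}$; the sum needs a single orthonormal family indexed by a product.

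The missing idea is to use \emph{spread-out} functions on $X$, whose $L^2(\mu)$-norm equals their sup norm, instead of localized bumps.
\begin{itemize}
\item Let $\{x_1,\dots,x_N\}$ be a maximal $\delta$-separated set, $\mu$ uniform on it, $\psi_m(y)=\max\{0,1-2d(y,x_m)/\delta\}$, and $u_j=\sum_{m=1}^N e^{2\pi\mathrm{i}mj/N}\psi_m$.
\item Then $\|u_j\|_\infty=1$, and $L_X(u_j)\le 2\delta^{-1}$ because the supports are disjoint and the centres are separated.
\item Also $\langle u_j,u_{j'}\rangle_{L^2(\mu)}=\frac1N\sum_m e^{2\pi\mathrm{i}m(j'-j)/N}=\delta_{jj'}$.
\end{itemize}
These do the job of the Kerr ``unitaries'' invoked in the paper; only $\|u_j\|_\infty\le 1$ and $\mu$-orthonormality are used. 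For $\ell_\Gamma(t)\le\delta^{-1/k}$ one has $\tfrac{\delta}{2}\delta_tu_j\in\mathcal{L}_1$, and the vectors $\Lambda_\tau(\delta_tu_j)[\delta_e1]$ are orthonormal by Lemma \ref{Voiculescu_orthonormal}. Voiculescu's lemma at tolerance $\tfrac12$, after rescaling, gives $D(\mathcal{L}_1,\delta/4)\ge\tfrac34\,N\,|B_{\ell_\Gamma}(\delta^{-1/k})|$. Taking $\log$, dividing by $\log(4/\delta)$ and passing to the $\limsup$ yields $\dim_B(X)+\tfrac1k$. This is essentially the paper's proof.

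A minor point: the bound $|B_{\ell_\Gamma}(n)|\ge c'n$ (not $\ge n$) does not come from Lemma \ref{length-Lip}. It follows from subadditivity, which gives $\ell_\Gamma(t)\le c\,|t|_S$ for a finite generating set $S$, together with $\Gamma$ being infinite. You rightly make that assumption explicit; the paper leaves it implicit.
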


\begin{proof}
Fix a $\delta > 0$. Let $E \subseteq X$ be a maximal $\delta$-separated set with respect to the metric $d,$ and let $s (\delta) = |E|$ denote its finite cardinality. Let $\mu$ be a probability measure on $X$ supported entirely on $E$. Then recall the GNS triple $(L^2(\tau), \Lambda_\tau, [\delta_e 1])$.\\

Following the construction of Kerr, there exist unitaries $u_1, \dots, u_{s (\delta)} \in C(X)$ that are orthonormal with respect to the inner product induced by $\mu$ on $C(X)$, satisfying the \clipnorm\ bound
$$L_X(u_j) \leq C \delta^{-1},$$
for all $j \in \{1, \dots, s(\delta)\}$ and some constant $C > 1$. Furthermore, since $u_j$'s are unitaries, $\|u_j\|_{\infty} = 1.$ Let $\ell_{\Gamma}$ be the proper length function on $\Gamma$ induced from $\ell$ as in Lemma \ref{induced length}. For a fixed $k > p,$ define
$$I(\delta, k) : = \left\{t \in \Gamma : \ell_{\Gamma} (t) \leq (\delta^{-1})^{\frac{1}{k}} \right\}.$$
Let $N(\delta, k) = |I(\delta, k)|$. Since any length function on a finitely generated discrete group will at least have linear growth, it follows that there exists some constant $C' > 0$ such that $$N (\delta, k) \geq C' \left (\delta^{-1} \right )^{\frac {1} {k}}.$$ We define a finite subset $\mathcal{F} (\delta) \subseteq C_r^{\ast} (\mathcal G, \omega)$ as follows $:$
$$\mathcal{F} (\delta) = \left\{ \delta_t u_j : j = 1, \dots, s (\delta), \text{ and } t \in I(\delta, k) \right\}.$$
The cardinality of this set is precisely $|\mathcal{F} (\delta)| = s (\delta) \cdot N(\delta, k).$ 
Note that
$$L_{\ell}^{k}(\delta_t u_j) \leq \ell_{\Gamma} (t)^k \|u_j\|_{\infty} \leq \left( (\delta^{-1})^{\frac{1}{k}} \right)^k \cdot 1 = \delta^{-1}.$$
Taking the maximum of these components, we obtain $L^{\mathcal{K}_S, k}(\delta_g u_j) \leq \max(C\delta^{-1}, \delta^{-1}) \le C \delta^{-1}$. This shows that $C^{-1} \delta\ \mathcal F (\delta) \subseteq \mathcal L^{\mathcal K_S, k}_1.$
Let us consider the set of vectors $$\Omega (\delta) = \{ \Lambda_\tau(\delta_t u_j)[\delta_e 1]\ :\ t \in I (\delta, k),\ j = 1, \cdots, s (\delta) \} \subseteq L^2(\tau).$$ 
Therefore, by Lemma \ref{Voiculescu_orthonormal}, $\Omega(\delta)$ forms a finite orthonormal set of cardinality $s (\delta) \cdot N(\delta, k)$ in $L^2(\tau).$
By the virtue of Voiculescu's lemma (Theorem \ref{Voiculescu}) we have
$$D \left (\Omega (\delta), \frac {1} {2} \right ) \geq \frac {3} {4} \cdot s (\delta) \cdot N(\delta, k) \geq \frac {3} {4} \cdot s (\delta) \cdot C' \left (\delta^{-1} \right )^{\frac {1} {k}}.$$

\Bea
\frac {\log D \left (\mathcal L^{\mathcal K_S, k}_1, 2^{-1} C^{-1} \delta \right )} {\log 2 C \delta^{-1}} & \geq & \frac {\log D \left (C^{-1} \delta\ \mathcal F (\delta), 2^{-1} C^{-1} \delta \right )} {\log 2 C \delta^{-1}} \\ & = & \frac {\log D \left (\mathcal F (\delta), 2^{-1} \right )} {\log 2 C \delta^{-1}} \\ & \geq & \frac {\log D \left (\Omega (\delta), 2^{-1} \right )} {\log 2 C \delta^{-1}} \\ & \geq & \frac {\log \left (\frac {3} {4} \cdot s (\delta) \cdot C' \left (\delta^{-1} \right )^{\frac {1} {k}} \right )} {\log 2 C \delta^{-1}}.
\Eea
Taking limsup on both the sides we have
\Bea 
\mathrm {Mdim}_{L^{\mathcal K_S, k}} \left (C_r^{\ast} (\mathcal G, \omega) \right ) & \geq & \limsup\limits_{\delta \to 0^{+}} \frac {\log s (\delta)} {\log 2 C \delta^{-1}} + \frac {1} {k} \limsup\limits_{\delta \to 0^{+}} \frac {\log \delta^{-1}} {\log 2 C \delta^{-1}} \\ & = & \limsup\limits_{\delta \to 0^{+}} \frac {\log s (\delta)} {\log \delta^{-1}} \cdot \lim\limits_{\delta \to 0^{+}} \frac {\log 2 C \delta^{-1}} {\log \delta^{-1}} + \frac {1} {k} \cdot 1 \\ & = & \dim_{B} (X) + \frac {1} {k},
\Eea
as required.

\end{proof}
Now if we take $X$ to be a single point $\{\ast\}$, then the Kolmogorov dimension of $X$ is zero. We take the trivial action of a group of polynomial growth $\Gamma$ with respect to some length function $\ell_{\Gamma}$. Any $2$-cocycle $\omega$ on the corresponding groupoid becomes a $2$-cocycle on the group $\Gamma$. Also the length $\ell_{\Gamma}$ on $\Gamma$ can be viewed canonically as a length function on the groupoid and such length function tautologically satisfies Condition \ref{Lip-length}. Then $C^{\ast}_{r}(\Gamma\rtimes\{{\ast}\},\omega)\cong C_{r}^{\ast}(\Gamma,\omega)$ canonically. The metric stratification becomes a tautological one and consequently the stratified \clipnorm\ on $C^{\ast}_{r}(\Gamma,\omega)$ coincides with the \clipnorm\ $L^{k}_{\ell}$ as considered in \cite{chattopadhyay2026metric}. Therefore, we get the following corollary by combining Theorem \ref{general cocycle bound} and Theorem \ref{Mdim lower bound} which recovers Theorem 3.1 of \cite{chattopadhyay2026metric} and extends it for the $2$-cocycle twist case.
\begin{cor}
 Let $\Gamma$ be a finitely generated discrete group of polynomial growth of degree $r\geq 1$ with respect to a length function $\ell$. Let $\omega$ be a $2$-cocycle on $\Gamma$. Then for any $k>\frac{r}{2}$, 
 \begin{displaymath}
     \frac{1}{k}\leq\mathrm{Mdim}_{L_{\ell}^{k}}(C^{\ast}_{r}(\Gamma,\omega))\leq \frac{2r}{2k-r}.
 \end{displaymath}
\end{cor}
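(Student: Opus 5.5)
The plan is to specialize to the one-point phase space $X=\{\ast\}$ with the trivial action of $\Gamma$, and read off both inequalities from Theorem \ref{general cocycle bound} and Theorem \ref{Mdim lower bound}.

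\textbf{Setting up the specialization.} Put $\mathcal G=\Gamma\rtimes\{\ast\}$ and regard $\omega$ as a normalized $2$-cocycle on $\mathcal G$. Take the length function $\ell(t,\ast):=\ell(t)$. First I check the structural hypotheses.
\begin{itemize}
\item Condition \eqref{Lip-length} holds trivially, since $d(\ast,\ast)=0$.
\item The length function is continuous, because $\mathcal G$ is discrete.
\item It is proper: bounded sets in $\Gamma$ are finite, taking the standing convention that a length function of polynomial growth has finite balls.
\item Polynomial growth of $\mathcal G$ with exponent $r$ is exactly polynomial growth of $\Gamma$, since $B_\ast(n)=B_\ell(n)$.
\end{itemize}
Next, the identification $C^{\ast}_{r}(\mathcal G,\omega)\cong C^{\ast}_r(\Gamma,\omega)$ sends $\delta_t a_t$, with $a_t\in\mathbb C$, to $a_t\lambda^\omega_t$. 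On $C(\{\ast\})=\mathbb C$ the Lipschitz seminorm vanishes identically. Hence $L^{\mathcal K_S,k}(f)=L^k_\ell(f)$ on $C_c(\Gamma)$, and both seminorms are $+\infty$ elsewhere. So the two CQMS coincide, via the identity map, which is trivially bi-Lipschitz.

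\textbf{Upper bound.} Apply Theorem \ref{general cocycle bound} with $M_X=\dim_B(\{\ast\})=0$. Indeed, $N(\delta,d)=1$ for every $\delta$, so $\log N=0$. This gives $\mathrm{Mdim}\le \frac{2r}{2k-r}$ for every $k>\frac r2$.

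\textbf{Lower bound.} By Lemma \ref{polytorapid}, $\mathcal G$ has twisted rapid decay with any exponent $p\in(\frac r2,k)$. This interval is nonempty because $k>\frac r2$. Theorem \ref{Mdim lower bound} then yields $\mathrm{Mdim}\ge 0+\frac1k$. I will double-check one step of that proof in the degenerate case: there $s(\delta)=1$ and $u_1=1$, so the orthonormal family from Lemma \ref{Voiculescu_orthonormal} reduces to $\{\Lambda_\tau(\delta_t 1)[\delta_e1]\}$, which the lemma still covers.

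\textbf{Main obstacle.} The only delicate point is the claim $N(\delta,k)\gtrsim \delta^{-1/k}$ used in Theorem \ref{Mdim lower bound}. It needs $\Gamma$ infinite and $\ell$ at most linearly growing relative to the word length, which I would verify via $\ell(t)\le \max_{s\in S}\ell(s)\,|t|_S$ by subadditivity. If $\Gamma$ were finite, both sides degenerate. I would remark that the growth degree $r\ge1$ implicitly presumes $\Gamma$ infinite, for otherwise the lower bound $\frac1k$ fails, since the dimension is $0$.
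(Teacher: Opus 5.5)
Your proposal is correct and is essentially the paper's own argument: specialize to $X=\{\ast\}$ with the trivial action, identify $L^{\mathcal K_S,k}$ with $L^k_\ell$, and read off the two bounds from Theorem \ref{general cocycle bound} with $M_X=0$ and from Theorem \ref{Mdim lower bound} via Lemma \ref{polytorapid} with $p\in(\frac r2,k)$. Properness of $\ell$ needs no extra convention, since it follows directly from the growth bound. Your caveat about finite $\Gamma$ is well taken and goes beyond the paper: the paper's definition of polynomial growth is only an upper bound, so finite groups satisfy the hypotheses, yet their metric dimension is $0<\frac1k$, and the ``at least linear growth'' step in Theorem \ref{Mdim lower bound} tacitly requires $\Gamma$ to be infinite.
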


We end this subsection by noting an observation regarding the adjoint invariance of the stratified \clipnorm\ .
\begin{lem}
Let $\mathcal{G} : = \Gamma \rtimes X$ be the transformation groupoid associated with an {\bf isometric} action of $\Gamma$ on a compact metric space $(X, d)$ equipped with a normalized $2$-cocycle $\sigma : \Gamma \times \Gamma \rightarrow \mathbb T,$ viewed as a $2$-cocycle $\omega : \Gamma \times \Gamma \times X \rightarrow \mathbb T$ in the trivial manner, i.e., $\omega (t, u, x) = \sigma (t, u)$ for all $t, u \in \Gamma$ and $x \in X.$ Let $\ell$ be a continuous proper length function on $\mathcal G.$ Then for all $f = \sum\limits_{t \in \Gamma} \delta_t a_t \in C_c \left (\Gamma, \mathrm {Lip} (X) \right )$ and for all $k \in \mathbb N,$ we have $L^{\mathcal K_S, k} (f) = L^{\mathcal K_S, k} (f^{\ast_{\omega}}).$ 
\end{lem}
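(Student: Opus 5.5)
The plan is to treat the two pieces of $L^{\mathcal K_S,k}(f)=\max\{L^k_\ell(f),\ \sup_{t}L_X(a_t)\}$ separately and show that each is unchanged under $f\mapsto f^{\ast_\omega}$. Throughout I use the convention of the paper that $f=\sum_t\delta_t a_t$ means $f(t,x)=a_t(x)$.

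\emph{Step 1: the derivation part.} By Proposition \ref{prop1}, $\Delta^k_\omega(f)^\ast=(-1)^k\Delta^k_\omega(f^{\ast_\omega})$ as adjointable operators on $L^2(\mathcal G,\omega)$. Since the adjoint preserves the operator norm, this gives $L^k_\ell(f^{\ast_\omega})=L^k_\ell(f)$. This step does not use isometry or the special form of the cocycle; it is the same observation already used in the proof of the corollary following Proposition \ref{prop1}.

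\emph{Step 2: the coefficient part.} Write $f^{\ast_\omega}=\sum_t\delta_t b_t$. From the formula for the involution together with $\omega(t,u,x)=\sigma(t,u)$,
\begin{displaymath}
b_t(x)=f^{\ast_\omega}(t,x)=\overline{\sigma(t,t^{-1})}\;\overline{a_{t^{-1}}(t\cdot x)} .
\end{displaymath}
Thus $b_t$ is the unimodular constant $\overline{\sigma(t,t^{-1})}$ times the complex conjugate of $a_{t^{-1}}\circ\alpha_t$, where $\alpha_t(x)=t\cdot x$.
\begin{itemize}
\item Multiplying by a constant of modulus one does not change the Lipschitz seminorm $L_X$.
\item Complex conjugation does not change $L_X$.
\item Because the action is isometric, $d(t\cdot x,t\cdot y)=d(x,y)$, so $L_X(a\circ\alpha_t)=L_X(a)$ for every $a\in\mathrm{Lip}(X)$.
\end{itemize}
Hence $L_X(b_t)=L_X(a_{t^{-1}})$ for every $t$. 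In particular each $b_t$ lies in $\mathrm{Lip}(X)$, and $b_t\neq 0$ only for finitely many $t$, so $f^{\ast_\omega}\in C_c(\Gamma,\mathrm{Lip}(X))$ and $L^{\mathcal K_S,k}(f^{\ast_\omega})$ is given by the finite formula rather than $+\infty$. Since $t\mapsto t^{-1}$ is a bijection of $\Gamma$, we get $\sup_t L_X(b_t)=\sup_t L_X(a_t)$.

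Combining Steps 1 and 2 gives $L^{\mathcal K_S,k}(f)=L^{\mathcal K_S,k}(f^{\ast_\omega})$.

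The only delicate point is Step 2: one must get the involution formula right under the coefficient convention, and see exactly where the hypotheses enter. If the cocycle depended on $x$, the factor $\overline{\omega(t,t^{-1},t\cdot x)}$ would be a nonconstant function and could increase the Lipschitz constant. If the action were not isometric, composing with $\alpha_t$ could change $L_X$. Both hypotheses of the lemma are therefore used precisely there.
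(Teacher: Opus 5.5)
Your proposal is correct and is essentially the paper's proof: the paper also gets $L^k_\ell(f^{\ast_\omega})=L^k_\ell(f)$ from adjoint invariance via Proposition \ref{prop1}. It then handles the coefficient part exactly as you do, using the unimodular constant $\overline{\sigma(t,t^{-1})}$, invariance of $L_X$ under conjugation, invariance under composition with the isometries of the action, and the reindexing $t\mapsto t^{-1}$. Your explicit check that $f^{\ast_\omega}$ stays in $C_c(\Gamma,\mathrm{Lip}(X))$, so that the finite formula applies rather than $+\infty$, is a small point the paper leaves implicit.
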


\begin{proof}
Fix some $k \in \mathbb N$ and $f = \sum\limits_{t \in \Gamma} \delta_t a_t \in C_c (\Gamma, \mathrm {Lip} (X))$ arbitrarily. Note that $$f^{\ast_{\omega}} = \sum\limits_{t \in \Gamma} \delta_{t^{-1}}\ \alpha_{t} \left (\overline {a_t} \right ) \overline {\sigma \left (t^{-1}, t \right )}.$$ Since $L_{\ell}^{k}$ is already adjoint invariant, by the definition of static stratified \clipnorm, it follows that 
\Bea
L^{\mathcal K_S, k} \left (f^{\ast_{\omega}} \right ) & = & \max \left \{L_{\ell}^{k} \left (f^{\ast_{\omega}} \right ), \sup\limits_{t \in \Gamma} L \left (\alpha_t \left (\overline {a_t} \right ) \overline {\sigma \left (t^{-1}, t \right )} \right ) \right \} \\ & = & \max \left \{L_{\ell}^{k} (f), \sup\limits_{t \in \Gamma} L \left (\alpha_t (\overline {a_t})  \right ) \right \} \\ & = & \max \left \{L_{\ell}^{k} (f), \sup\limits_{t \in \Gamma} L \left (\overline {a_t}  \right ) \right \} \quad (\because \mathrm {the\ action\ is\ isometric}) \\ & = & \max \left \{L_{\ell}^{k} (f), \sup\limits_{t \in \Gamma} L \left (a_t \right ) \right \} \\ & = & L^{\mathcal K_S, k} (f),
\Eea
as required.
\end{proof}
\subsubsection{Dynamic stratification} Bowen metrics were introduced by R. Bowen \cite{bowen1971entropy}*{Section 1} in order to define topological entropy of a topological dynamical system $(X, d, T),$ where $T$ is a uniformly continuous map on the metric space $(X, d).$ Motivated by this, we define dynamical stratification in terms of Bowen type metrics as follows $:$

\vspace{2mm}

Let $\ell_{\Gamma}$ be the induced proper length function on $\Gamma$ from the length function $\ell$ on $\mathcal{G}$. Let $E_{n}:=\{t\in\Gamma: \ell_{\Gamma}(t)\leq n\}$ for all $n\in\mathbb{N}\cup\{0\}$. Then clearly $E_{n}\subseteq E_{n+1}$ for all $n$. For each $n$, we define a Bowen type metric $d_{n}$ on $X$ given by $d_{n}(x,y):=\sup\limits_{t\in E_{n}}d(t\cdot x,t\cdot y)$. We define $d_{t}$ on $\{t\}\times X$ as the metric $d_{n}$ such that $t\in E_{n}$ for the smallest $n$. Clearly $d_{t}$ metrizes $X$ and $d_{t}\geq d_{e}=d$ for all $t\in\Gamma$. Then the choice $\{d_{t}\}_{t\in\Gamma}$ gives a metric stratification. We call it dynamic metric stratification and denote it by $\mathcal{K}_{D}$. The corresponding stratified \clipnorm\ will be denoted as usual by $L^{\mathcal{K}_{D},k}$. We start by noting that an isometric action of $\Gamma$ on $(X,d)$ reduces the dynamical stratification to the static one and therefore we get the analogue of the upper bound of the metric dimension for the dynamic stratification if the action of $\Gamma$ on $X$ is uniformly Lipschitz. The following lemma plays a crucial role in proving the analogues.

\begin{rem}
Note that if the discrete group of integers $\mathbb Z$ is acting on a compact metric space $(X, d)$ and is equipped with the word length function, then the associated Bowen type metric takes the following form $:$
$$d_n (x, y) = \sup\limits_{\left \lvert j \right \rvert \leq n} d (j \cdot x, j \cdot y),$$ for any pair of points $x, y \in X$ and for any $n \in \mathbb N \cup \{0\}.$
\end{rem}

\begin{lem}\label{uniformly Lipschitz}
Let $\mathcal{G} : = \Gamma \rtimes X$ be the transformation groupoid associated with an action of a countable discrete group $\Gamma$ on a compact metric space $(X, d)$ equipped with a normalized $2$-cocycle $\omega : \Gamma \times \Gamma \times X \rightarrow \mathbb T.$ Assume that $d$ is bi-Lipschitz equivalent to a metric $d'$ on $X.$ Let $\ell$ be a continuous proper length function on $\mathcal G,$ possessing the property of rapid decay with decay exponent $p > 1.$ If $k > p,$ then
$$\mathrm{Mdim}_{L_d^{\mathcal K_D,k}} (C_r^*(\mathcal{G}, \omega)) = \mathrm{Mdim}_{L_{d'}^{\mathcal K_D, k}} \left (C_r^{\ast} (\mathcal G, \omega) \right ).$$
\end{lem}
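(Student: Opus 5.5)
The plan is to show that the identity map on $C_r^{\ast}(\mathcal G,\omega)$ is a bi-Lipschitz equivalence between $(C_r^{\ast}(\mathcal G,\omega),L_d^{\mathcal K_D,k})$ and $(C_r^{\ast}(\mathcal G,\omega),L_{d'}^{\mathcal K_D,k})$. Theorem \ref{Metric_inv} then gives equality of metric dimensions. Both pairs are CQMS by Theorem \ref{CQMS}, since $k>p$. The identity is unital, positive and invertible, so only the two Lipschitz estimates are needed.

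First I would fix constants $c,C>0$ with $c\,d\le d'\le C\,d$. For each $n$, the Bowen type metrics satisfy
\begin{displaymath}
d'_n(x,y)=\sup_{t\in E_n}d'(t\cdot x,t\cdot y)\le C\sup_{t\in E_n}d(t\cdot x,t\cdot y)=C\,d_n(x,y),
\end{displaymath}
and likewise $d'_n\ge c\,d_n$. The sets $E_n$ are defined through $\ell_\Gamma$ only, so they do not depend on the metric. Hence the fibre assignment $t\mapsto n(t)$ (the smallest $n$ with $t\in E_n$) is the same for both stratifications. This gives $c\,d_t\le d'_t\le C\,d_t$ on every fibre $\{t\}\times X$, with the same constants for all $t\in\Gamma$. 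Passing to the fibrewise Lipschitz seminorms yields $C^{-1}L_{d_t}(a)\le L_{d'_t}(a)\le c^{-1}L_{d_t}(a)$ for all $a\in C(X)$ and all $t$.

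Next I would compare the domains. $\mathrm{Lip}(X,d)=\mathrm{Lip}(X,d')$, so both seminorms have the same domain $C_c(\Gamma,\mathrm{Lip}(X))$. The component $L_\ell^k$ does not involve the metric at all. Taking the maximum of the two components gives
\begin{displaymath}
L_{d'}^{\mathcal K_D,k}(f)\le\max\{1,c^{-1}\}\,L_d^{\mathcal K_D,k}(f),\qquad L_d^{\mathcal K_D,k}(f)\le\max\{1,C\}\,L_{d'}^{\mathcal K_D,k}(f).
\end{displaymath}
Outside the common domain both seminorms equal $+\infty$.

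The main point to check, and the only real obstacle, is that the bi-Lipschitz constants are \emph{uniform in $t$}. A supremum over infinitely many fibres is taken, so a $t$-dependent constant would break the argument. This uniformity comes precisely from the fact that the Bowen metrics are suprema of pulled-back copies of the same base metric. One must also make sure that the assignment $t\mapsto d_t$ is indeed metric-independent, which holds because $E_n$ depends only on $\ell$. Once this is verified, Theorem \ref{Metric_inv} finishes the proof.
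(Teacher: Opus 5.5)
Your proposal is correct and follows essentially the same route as the paper. The paper also compares the Bowen metrics $d_n$ and $d'_n$ with constants uniform in $n$, deduces $\mathcal L_d\subseteq c^{-1}\mathcal L_{d'}$ (and symmetrically), and then does the covering-number rescaling $D(\mathcal L_d,\delta)\le D(\mathcal L_{d'},c\delta)$ by hand, which you instead obtain by citing Theorem \ref{Metric_inv}; your explicit checks that the fibre assignment $t\mapsto n(t)$ and the domains are metric-independent are points the paper leaves implicit.
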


\begin{proof}
Let $\mathcal{L}_d$ denote the unit Lip-ball of $C_r^*(\mathcal{G}, \omega)$ with respect to the dynamic stratified \clipnorm\ $L_d^{\mathcal K_D, k}$ induced by the base metric $d$ and its associated Bowen metrics. Similarly consider $\mathcal L_{d'}$ and $L^{\mathcal K_D, k}_{d'}$ induced by the metric $d'.$

\vspace{2mm}

Since $d$ and $d'$ are uniformly bi-Lipschitz equivalent, there exist constants $c \in (0, 1]$ and $C \geq 1$ such that $c d(x, y) \leq d'(x, y) \leq C d (x,y)$ for all $x, y \in X.$ Exploiting the inequality on the left we have
$$d_n(x, y) = \max_{t \in E_n} d(t \cdot x, t \cdot y) \le \max_{t \in E_n} \frac{1}{c} d'(t \cdot x, t \cdot y) = \frac{1}{c} d_n'(x, y),$$ 
Let $f = \sum\limits_{t \in \Gamma} \delta_t a_t \in \mathcal{L}_d.$ Fix some $t_0 \in \Gamma$ arbitrarily and get hold of the smallest positive integer $n_0$ such that $t_0 \in E_{n_0}.$ Let $L_{t_0, d}$ and $L_{t_0, d'}$ be the \clipnorms\ on $C (X)$ associated to the Bowen metrics $d_n$ and $d_n'$ respectively. By definition, $L_{t_0, d} (a_{t_0}) \leq 1.$ Then the above metric inequality strictly forces $L_{t_0, d'}(a_t) \leq \frac{1}{c}.$ Since the condition $L_{\ell}^k \left (\sum\limits_{t \in \Gamma} \delta_t a_t \right ) \leq 1$ is completely independent of the choice of base metric, it follows that $L_{d'}^{\mathcal K_D, k}(f) \le \frac{1}{c}.$ 
This shows that $\mathcal{L}_d \subseteq \frac{1}{c} \mathcal{L}_{d'}.$ Thus by linear scaling it follows that
$$D(\mathcal{L}_d, \delta) \le D\left(\frac{1}{c} \mathcal{L}_{d'}, \delta\right) = D(\mathcal{L}_{d'}, c\delta).$$
Taking logarithm in both the sides and evaluating the limit supremum as $\delta \to 0^+,$ we have
\Bea
\limsup_{\delta \to 0^+} \frac{\log D(\mathcal{L}_d, \delta)}{\log \delta^{-1}} & \leq & \limsup_{\delta \to 0^+} \frac{\log D(\mathcal{L}_{d'}, c\delta)}{\log \delta^{-1}} \\
& = & \limsup_{\delta \to 0^+} \left[ \frac{\log D(\mathcal{L}_{d'}, c\delta)}{\log(c\delta)^{-1}} \cdot \frac{\log(c\delta)^{-1}}{\log \delta^{-1}} \right] \\
& = & \mathrm{Mdim}_{L_{d'}^{\mathcal K_D, k}} (C_r^*(\mathcal{G}, \omega)).
\Eea
This shows that 
$$\mathrm{Mdim}_{L_d^{\mathcal K_D,k}} (C_r^*(\mathcal{G}, \omega)) \leq \mathrm{Mdim}_{L_{d'}^{\mathcal K_D, k}} \left (C_r^{\ast} (\mathcal G, \omega) \right ).$$ Similarly using the inequality $d' (x, y) \leq C d (x y)$ for all $x, y \in X,$ we have
$$\mathrm{Mdim}_{L_{d'}^{\mathcal K_D, k}} \left (C_r^{\ast} (\mathcal G, \omega) \right ) \leq \mathrm{Mdim}_{L_d^{\mathcal K_D,k}} (C_r^{\ast}(\mathcal{G}, \omega)).$$
This completes the proof.

\end{proof}

\begin{thm}\label{dynamic_finite_bound}
Let $\mathcal{G}$ be the transformation groupoid associated with an action of $\Gamma$ on a compact metric space $(X, d)$ equipped with a normalized $2$-cocycle $\omega : \Gamma \times \Gamma \times X \rightarrow \mathbb T.$ Let the action of $\Gamma$ on $X$ be uniformly Lipschitz. Let $\ell$ be a continuous proper length function on $\mathcal G$ satisfying the condition \eqref{Lip-length}, under which $\mathcal G$ possesses the property of polynomial growth of degree $r \geq 1.$ Let $C_r^{\ast}(\mathcal {G}, \omega)$ be equipped with the dynamic stratified \clipnorm\ $L^{\mathcal{K}_{D}, k}.$ Then for all $k > \frac{r}{2}$, we have
\begin{displaymath}\mathrm{Mdim}_{L^{\mathcal{K}_{D},k}} (C_r^{\ast}(\mathcal{G}, \omega)) \leq \frac {2 r} {2 k - r} + M_X \cdot \left (\frac {2 k + r} {2 k - r} \right ).\end{displaymath}
\end{thm}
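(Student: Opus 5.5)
The plan is to compare the dynamic stratified \clipnorm\ with the static one, and then transfer the bound of Theorem \ref{general cocycle bound} by a scaling argument. Uniform Lipschitzness of the action means there is a constant $\Lambda \geq 1$ such that
\begin{displaymath}
d(t\cdot x, t\cdot y) \leq \Lambda\, d(x,y) \quad \text{for all } t\in\Gamma,\ x,y\in X.
\end{displaymath}
Since every Bowen type metric $d_n$ is a supremum over $t\in E_n$ of the metrics $d(t\cdot x,t\cdot y)$, we get $d \leq d_n \leq \Lambda d$ for every $n$. The lower inequality holds because $e\in E_n$. Hence every metric $d_t$ of the dynamic stratification $\mathcal K_D$ satisfies $d\leq d_t\leq \Lambda d$.

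Passing to Lipschitz constants gives
\begin{displaymath}
\Lambda^{-1} L(a)\leq L_t(a)\leq L(a) \quad \text{for all } a\in\mathrm{Lip}(X),\ t\in\Gamma,
\end{displaymath}
where $L$ is the \clipnorm\ of $d$. The first summand $L^{k}_{\ell}$ in the stratified seminorm does not depend on the stratification. Therefore, for $f=\sum_t \delta_t a_t\in C_c(\Gamma,\mathrm{Lip}(X))$,
\begin{displaymath}
L^{\mathcal K_S,k}(f)=\max\Big\{L^k_\ell(f),\sup_t L(a_t)\Big\}\leq \Lambda \max\Big\{L^k_\ell(f),\sup_t L_t(a_t)\Big\}=\Lambda\, L^{\mathcal K_D,k}(f).
\end{displaymath}
Outside $C_c(\Gamma,\mathrm{Lip}(X))$ both seminorms are $+\infty$. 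Both seminorms are \clipnorms\ for $k>\frac r2$ by Lemma \ref{polytorapid} and Theorem \ref{CQMS}. Consequently the unit balls satisfy $\mathcal L_1^{\mathcal K_D,k}\subseteq \Lambda\,\mathcal L_1^{\mathcal K_S,k}$. Here we use that if $\|f\|\leq 1$ and $L^{\mathcal K_S,k}(f)\leq \Lambda$, then $f/\Lambda$ has norm at most $\Lambda^{-1}\leq 1$ and seminorm at most $1$.

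Next comes the scaling step. The map $Z\mapsto \Lambda Z$ preserves dimension of finite dimensional subspaces, and $S\subseteq_{\delta/\Lambda} Z$ implies $\Lambda S\subseteq_\delta Z$. Hence
\begin{displaymath}
D(\mathcal L_1^{\mathcal K_D,k},\delta)\leq D(\Lambda\mathcal L_1^{\mathcal K_S,k},\delta)\leq D(\mathcal L_1^{\mathcal K_S,k},\delta/\Lambda).
\end{displaymath}
Dividing by $\log\delta^{-1}$ and writing
\begin{displaymath}
\frac{\log(\Lambda\delta^{-1})}{\log\delta^{-1}}\to 1,
\end{displaymath}
exactly as in the proof of Lemma \ref{uniformly Lipschitz}, gives
\begin{displaymath}
\mathrm{Mdim}_{L^{\mathcal K_D,k}}(C^*_r(\mathcal G,\omega))\leq \mathrm{Mdim}_{L^{\mathcal K_S,k}}(C^*_r(\mathcal G,\omega)).
\end{displaymath}
Now Theorem \ref{general cocycle bound} applies, since its hypotheses are the same: condition \eqref{Lip-length}, polynomial growth of degree $r$, and $k>\frac r2$. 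It yields the stated bound $\frac{2r}{2k-r}+M_X\frac{2k+r}{2k-r}$.

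The main point to check is the comparison $d_t\leq\Lambda d$. This requires the Lipschitz constant to be uniform over all of $\Gamma$, and not just over generators. If $\Lambda$ were only a bound per generator, $d_n$ could grow like $\Lambda^n$, and the argument would break down. That is exactly the phenomenon exploited later in Corollary \ref{dynamic_complexity}.

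A fallback, if one prefers to avoid the comparison of unit balls, is to rerun the proof of Theorem \ref{general cocycle bound} verbatim. The only change is that each coefficient $a_t$ now lies in $\{L(a)\leq\Lambda,\|a\|_\infty\leq1\}\subseteq\Lambda\mathcal L^X_1$. One then approximates with $D(\mathcal L_1^X,\delta'/\Lambda)$, and this changes no exponent.
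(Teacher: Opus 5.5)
Your proof is correct, but it is organized differently from the paper's.

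The paper re-metrizes. Uniform Lipschitzness gives a metric $d'$ bi-Lipschitz equivalent to $d$ (e.g.\ $d'(x,y)=\sup_{t\in\Gamma}d(t\cdot x,t\cdot y)$) for which the action is isometric. For $d'$, the dynamic stratification coincides with the static one. The paper then applies Theorem \ref{general cocycle bound} with base metric $d'$ and transfers the bound back to $d$ via Lemma \ref{uniformly Lipschitz}. That lemma is itself a unit-ball scaling argument comparing the dynamic seminorms built from $d$ and from $d'$.

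You instead keep the base metric $d$ fixed and compare $\mathcal K_D$ with $\mathcal K_S$ directly, through $d\le d_t\le\Lambda d$. This is shorter and avoids the small checks the paper leaves implicit: that $\ell$ still satisfies \eqref{Lip-length} for $d'$, and that $\dim_B$ is unchanged when $d$ is replaced by $d'$. It also avoids the stated hypotheses $p>1$, $k>p$ of Lemma \ref{uniformly Lipschitz}. These rule out $k\le 1$, a case the theorem allows when $r<2$, even though the lemma's proof never uses them.

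Your comparison also gives slightly more. The inequality $L^{\mathcal K_D,k}\le L^{\mathcal K_S,k}$ holds for every action, since $d_t\ge d$. Together with your estimate $L^{\mathcal K_S,k}\le\Lambda L^{\mathcal K_D,k}$, this shows the identity map is a bi-Lipschitz equivalence in the sense of Definition \ref{bi-lipschitz_equivalence}. Theorem \ref{Metric_inv} then gives equality $\mathrm{Mdim}_{L^{\mathcal K_D,k}}=\mathrm{Mdim}_{L^{\mathcal K_S,k}}$ for uniformly Lipschitz actions, not just an inequality. The paper's route expresses the same fact more conceptually: up to a bi-Lipschitz change of metric the dynamics is isometric, hence invisible to $\mathcal K_D$.

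One correction to your closing commentary, which does not affect the proof. The example of Corollary \ref{dynamic_complexity} is not one where a generator is $\Lambda$-Lipschitz and $d_n$ grows like $\Lambda^n d$. There $T$ is not Lipschitz at all: on block $m$ it stretches distances by factors as large as $\eta_m^{-1}=2^{2^m}$. If $T^{\pm1}$ were $\Lambda$-Lipschitz, each $d_n$ would be bi-Lipschitz to $d$, forcing $\dim_B(X,d_n)=\dim_B(X,d)$. The blow-up in that example comes precisely from $\dim_B(X,d_n)=2n+1$.
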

\begin{proof}
    As the action is uniformly Lipschitz, we can choose a metric $d^{\prime}$ on $X$ which is strongly equivalent to $d$ such that the action is isometric with respect to $d^{\prime}$. In that case the Kolmogorov dimension of $X$ with respect $d'$ remains unaltered. Now note that for an isometric action, the dynamic stratification coincides with the static one. If we denote the corresponding stratified \clipnorm\ by $L^{\mathcal{K}_{D},k}_{d^{\prime}},$ then by Theorem \ref{general cocycle bound}, 
    \begin{displaymath}
     \mathrm{Mdim}_{L^{\mathcal{K}_{D},k}_{d^{\prime}}} (C_r^{\ast}(\mathcal{G}, \omega)) \leq \frac {2 r} {2 k - r} + M_X \cdot \left (\frac {2 k + r} {2 k - r} \right ).   
    \end{displaymath}
    But by Lemma \ref{uniformly Lipschitz}, 
    \begin{displaymath}
        \mathrm{Mdim}_{L^{\mathcal{K}_{D},k}_{d^{\prime}}} (C_r^{\ast}(\mathcal{G}, \omega))=\mathrm{Mdim}_{L^{\mathcal{K}_{D},k}} (C_r^{\ast}(\mathcal{G}, \omega)).
    \end{displaymath}
    Therefore, the result follows.
\end{proof}
Using the bound obtained in Theorem \ref{betterbound}, the same argument used in the last theorem produces the following theorem: 
\begin{thm}\label{dynamicbound}
Let $\mathcal{G}$ be the transformation groupoid associated with an action of $\Gamma$ on a compact metric space $(X, d)$ equipped with a normalized $2$-cocycle $\sigma : \Gamma \times \Gamma \rightarrow \mathbb T.$ Let the action of $\Gamma$ on $X$ be uniformly Lipschitz. Let $\ell$ be a continuous proper length function on $\mathcal G$ satisfying the condition \eqref{Lip-length}, under which $\mathcal G$ possesses the property of polynomial growth of degree $r \geq 1.$ Let $C_r^{\ast}(\mathcal {G}, \sigma)$ be equipped with the dynamic stratified \clipnorm\ $L^{\mathcal{K}_{D}, k}.$ Then for all $k > \frac{r}{2}$, we have
\begin{displaymath}\mathrm{Mdim}_{L^{\mathcal{K}_{D},k}} (C_r^{\ast}(\mathcal{G}, \sigma)) \leq  \frac {4 k + r} {2 k - r} \left( M_{\Gamma} + M_X \right). \end{displaymath}
\end{thm}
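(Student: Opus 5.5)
The proof follows the template of Theorem \ref{dynamic_finite_bound}, with Theorem \ref{betterbound} playing the role that Theorem \ref{general cocycle bound} played there. The cocycle $\sigma$ is a group cocycle, viewed as a groupoid cocycle in the trivial manner $\omega(t,u,x)=\sigma(t,u)$, so we stay inside the hypotheses of Theorem \ref{betterbound}.

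\emph{Step 1: change the base metric.} Since the action is uniformly Lipschitz, there is a constant $\Lambda\geq 1$ with $d(t\cdot x,t\cdot y)\leq \Lambda\, d(x,y)$ for all $t\in\Gamma$. Define
$$d'(x,y):=\sup_{t\in\Gamma} d(t\cdot x,t\cdot y).$$
Then $d\leq d'\leq \Lambda d$, and $d'$ is $\Gamma$-invariant. Because $d$ and $d'$ are bi-Lipschitz equivalent:
\begin{itemize}
\item the Kolmogorov dimension is unchanged, $\dim_B(X,d')=M_X$;
\item Condition \eqref{Lip-length} still holds for $d'$, with the same constant, since $d\leq d'$;
\item $M_\Gamma$ does not involve the metric on $X$, so it is unaffected.
\end{itemize}

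\emph{Step 2: the isometric case.} For the isometric action on $(X,d')$, every Bowen type metric satisfies $d'_n=d'$. Hence the dynamic stratification built from $d'$ coincides with the static one, and $L^{\mathcal{K}_D,k}_{d'}=L^{\mathcal{K}_S,k}_{d'}$. Applying Theorem \ref{betterbound} to $(X,d')$ gives, for $k>\frac r2$,
$$\mathrm{Mdim}_{L^{\mathcal{K}_D,k}_{d'}}\bigl(C_r^{\ast}(\mathcal G,\sigma)\bigr)\leq \frac{4k+r}{2k-r}\,(M_\Gamma+M_X).$$

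\emph{Step 3: transfer back to $d$.} Invoke Lemma \ref{uniformly Lipschitz} to identify the metric dimensions for $L^{\mathcal{K}_D,k}_{d}$ and $L^{\mathcal{K}_D,k}_{d'}$. This step is the one needing care, because that lemma is stated with a rapid decay exponent $p>1$ and $k>p$, whereas here we only know $k>\frac r2$.

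The way around this is to note that the proof of Lemma \ref{uniformly Lipschitz} uses rapid decay only to ensure both seminorms are \clipnorms. That is already supplied by Lemma \ref{polytorapid} together with Theorem \ref{CQMS}, for any $p\in(\frac r2,k)$. The actual argument is the inclusion $\mathcal L_d\subseteq c^{-1}\mathcal L_{d'}$ and its reverse. These come from the pointwise Bowen metric comparisons $c\,d_n\leq d'_n\leq C d_n$ and the independence of $L^k_\ell$ from the base metric. One should also check that the inclusion is compatible with the norm constraint $\lVert f\rVert\leq 1$ in $\mathcal L_1$: scaling by $c\leq 1$ keeps norms at most $1$, so it is. Therefore the lemma's conclusion holds verbatim here, and the claimed bound follows.
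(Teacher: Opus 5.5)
Your proposal is correct and is essentially the paper's argument. The paper proves this theorem by rerunning the proof of Theorem~\ref{dynamic_finite_bound}: pass to an equivalent $\Gamma$-invariant metric $d'$, observe that the dynamic stratification then coincides with the static one, apply Theorem~\ref{betterbound}, and transfer back via Lemma~\ref{uniformly Lipschitz}. Several details you supply are left implicit in the paper:
\begin{itemize}
\item the explicit choice $d'=\sup_{t}d(t\cdot,t\cdot)$;
\item the check that Condition \eqref{Lip-length} survives the change of metric;
\item the observation that the hypothesis $p>1$ in Lemma~\ref{uniformly Lipschitz} plays no role in its proof, so the lemma applies for all $k>\frac r2$.
\end{itemize}
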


Now we shall show that the assumption of uniformly Lipschitz action in the Theorem \ref{dynamic_finite_bound} and Theorem \ref{dynamicbound} is really important. To be more precise, we shall produce an example where a group of polynomial growth acts on a compact metric space of finite Kolmogorov dimension, the action is uniformly equicontinuous and yet the metric dimension for the CQMS structure   of the associated transformation groupoid $C^{\ast}$-algebra coming from the dynamic stratified \clipnorm\ is $+\infty$. To that end, we first obtain a lower bound for the metric dimension for a general action.

\begin{thm}
Let $\mathcal{G}$ be the transformation groupoid associated with an action of $\Gamma$ on a compact metric space $(X, d)$ equipped with a normalized $2$-cocycle $\omega : \Gamma \times \Gamma \times X \rightarrow \mathbb T.$ Let $\ell$ be a continuous proper length function on $\mathcal G$ satisfying the condition \eqref{Lip-length}, under which $\mathcal G$ possesses the property of rapid decay with decay exponent $p > 0.$ Let $C_r^{\ast}(\mathcal {G}, \omega)$ be equipped with the dynamic stratified \clipnorm\ $L^{\mathcal{K}_{D}, k}.$ Then for all $k > p,$ we have
$$\mathrm {Mdim}_{L^{\mathcal K_D, k}} \left (C_r^{\ast} (\mathcal G, \omega) \right) \geq \sup\limits_{n \geq 1} \dim_B (X, d_n) + \frac{1}{k}.$$
\end{thm}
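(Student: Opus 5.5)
The plan is to adapt the proof of Theorem \ref{Mdim lower bound} by replacing the base metric $d$ with the Bowen type metric $d_n$ on one fixed stratum, and then taking the supremum over $n$. Fix $n\geq 1$. It suffices to show $\mathrm{Mdim}_{L^{\mathcal K_D,k}}(C_r^{\ast}(\mathcal G,\omega))\geq \dim_B(X,d_n)+\frac1k$.

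Step 1: control the Lipschitz constants. Since $E_m$ is increasing, every $t$ with $\ell_\Gamma(t)\leq m$ carries $d_t=d_{m'}$ for some $m'\leq m$, and $d_{m'}\leq d_m$. I would not restrict the group elements to $E_n$. This is the obstacle: we need $N(\delta,k)\gtrsim\delta^{-1/k}$ many group elements, so they must range over a growing ball, whose Bowen metrics $d_m$ grow with $m$. To handle this, I use unitaries that are Lipschitz for every $d_t$ at once. By Kerr's construction (as in Theorem \ref{Mdim lower bound}), take a maximal $\delta$-separated set $E$ for $d_n$, a probability measure $\mu$ on $E$, and $\mu$-orthonormal unitaries $u_1,\dots,u_{s_n(\delta)}\in C(X)$ with $L_{d_n}(u_j)\leq C\delta^{-1}$. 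Here $s_n(\delta)=|E|$.

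Step 2: handle the elements of $E_n$ and outside it. For $t\in E_n$ we have $d_t\geq$ some $d_{m}$ with $m\leq n$, and in fact $d_t\leq d_n$. So $L_t(u_j)\leq L_{d_t}(u_j)$, which may exceed $L_{d_n}(u_j)$. I would fix this by placing the unitary in the coefficient differently. Write the element as $\delta_t\,\alpha$-twisted, or more simply use $u_j\delta_t$, i.e. $\delta_t\cdot(u_j\circ t\text{-shift})$, so that the coefficient is compatible with the group element. The simplest robust choice is $f_{t,j}=\delta_t u_j$ with $t$ ranging only over $E_n$ together with $t$ for which $d_t\leq d_n$. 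Since $d_t$ only grows, and because the set $I(\delta,k)$ is needed to grow, the cleaner route is to put all the Bowen cost onto the $e$-fiber. Take elements $u_j\delta_t$ instead of $\delta_t u_j$. The coefficient of $u_j\delta_t=\delta_t\,(u_j\circ t)$-type, up to the cocycle phase, is $a_t=\alpha_{t^{-1}}(u_j)$. Its $d_t$-Lipschitz constant is at most $L_{d_t\circ t}(u_j)$. For $\ell_\Gamma(t)\leq m$, this is bounded by the $d_{n+m}$-Lipschitz constant by subadditivity of $\ell_\Gamma$. This still grows, so I expect the cleanest fix is to restrict to $t\in E_n$ and absorb polynomial factors. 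If that fails, I would fall back on bounding $L_t(\delta_t$-coefficient$)$ by using $d_t\leq d_{m}$ with $m\approx\delta^{-1/k}$. I would then accept a dimension contribution coming only from $t$ in a ball where $d_t=d_n$ up to bi-Lipschitz constants. This step is where I expect the real difficulty.

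Step 3: orthonormality and counting. Given $L^{\mathcal K_D,k}(\delta_tu_j)\leq C\delta^{-1}$ for $t\in I(\delta,k)$, the rest is verbatim from Theorem \ref{Mdim lower bound}. Lemma \ref{Voiculescu_orthonormal} makes $\{\Lambda_\tau(\delta_tu_j)[\delta_e1]\}$ orthonormal. Voiculescu's lemma (Theorem \ref{Voiculescu}) then gives $D(\mathcal L_1,\frac12C^{-1}\delta)\geq\frac34 s_n(\delta)C'\delta^{-1/k}$. Here $\|\delta_tu_j\|\leq1$, and the GNS map is contractive, so $D$ in the algebra dominates $D$ in $L^2(\tau)$. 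Dividing by $\log(2C\delta^{-1})$ and taking $\limsup$ yields $\dim_B(X,d_n)+\frac1k$. Finally, taking $\sup_n$ gives the stated bound.
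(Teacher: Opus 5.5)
There is a genuine gap. It sits exactly at the step you flag as the real difficulty, and it comes from reading the monotonicity backwards.

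Your Step 3 assumes $L^{\mathcal K_D,k}(\delta_t u_j)\le C\delta^{-1}$ for every $t\in I(\delta,k)=\{t:\ell_\Gamma(t)\le\delta^{-1/k}\}$. This set contains $B_{\ell_\Gamma}(n)$, and for such $t$ one has $d_t\le d_n$. Hence $L_t(u_j)\ge L_{d_n}(u_j)$, and there is no useful upper bound. For $t=e$ we have $L_t=L_d$, and points that are $\delta$-separated for $d_n$ can be extremely close for $d$ (as in the symbolic example). So $L_d(u_j)$ can be far larger than $C\delta^{-1}$.

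None of your proposed repairs fixes this:
\begin{itemize}
\item Restricting to $t\in E_n$ makes things worse. These are exactly the fibers with $d_t\le d_n$. It also leaves a finite, $\delta$-independent set of group elements, so the $+\frac1k$ term is lost.
\item The $u_j\delta_t$ variant produces a $t$-translate of $u_j$. As you note yourself, its Lipschitz constant is controlled by $d_{n+\ell_\Gamma(t)}$, not by $d_t$.
\item The fallback to a ball where $d_t$ agrees with $d_n$ up to bi-Lipschitz constants is not made precise. It gives no $\delta^{-1/k}$ growth in general.
\end{itemize}

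The missing observation is that a larger metric gives a \emph{smaller} Lipschitz seminorm: if $d'\ge d''$ then $L_{d'}\le L_{d''}$. So the growth of the Bowen metrics along the ball is not an obstacle; it is what makes the argument work. If $\ell_\Gamma(t)>n$, then $d_t=d_m$ for some $m>n$, hence $d_t\ge d_n$. Therefore $L_t(u_j)\le L_{d_n}(u_j)\le C\delta^{-1}$ for the same $d_n$-adapted unitaries, with no change to the elements $\delta_tu_j$.

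The paper accordingly uses $I(\delta,k,n)=\{t\in\Gamma: t\notin B_{\ell_\Gamma}(n),\ \ell_\Gamma(t)\le\delta^{-1/k}\}$. Since $\lvert B_{\ell_\Gamma}(n)\rvert$ is finite and independent of $\delta$, one still has $\lvert I(\delta,k,n)\rvert\ge C'\delta^{-1/k}-\lvert B_{\ell_\Gamma}(n)\rvert\ge\frac{C'}{2}\delta^{-1/k}$ for small $\delta$. Combined with $L_\ell^k(\delta_tu_j)\le\ell_\Gamma(t)^k\le\delta^{-1}$, this gives $C^{-1}\delta\,\mathcal F(\delta)\subseteq\mathcal L_1^{\mathcal K_D,k}$. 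With $I(\delta,k)$ replaced by $I(\delta,k,n)$, your Step 3 (orthonormality via Lemma \ref{Voiculescu_orthonormal}, Voiculescu's lemma, then $\sup_n$) goes through as written.
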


\begin{proof}
The proof is essentially a modification of the proof of Theorem \ref{Mdim lower bound}. Fix $n \ge 1$ and a $\delta > 0$. Let $E_n \subseteq X$ be a maximal $\delta$-separated set with respect to the Bowen metric $d_n$, and let $\left \lvert E_n \right \rvert : = s (\delta) < \infty.$ Let $\mu$ be a probability measure on $X$ supported on $E_n.$ Again recall the GNS triple $(L^{2}(\tau),\Lambda_{\tau},[\delta_{e}1])$.\\

Following the construction of Kerr, there exist unitaries $u_1, \dots, u_{s (\delta)} \in C(X)$ that are orthonormal with respect to the inner product induced by $\mu$ on $C(X)$, satisfying the Lip-norm bound
$$L_n(u_j) \leq C \delta^{-1},$$
for all $j \in \{1, \dots, s(\delta)\}$ and some constant $C > 1$. Furthermore, since they are unitaries, $\|u_j\|_{\infty} = 1$. Let $\ell_{\Gamma}$ be the proper length function on $\Gamma$ induced from $\ell$ as in Lemma \ref{induced length}. For a fixed $k > p,$ define
$$I(\delta, k, n) : = \left\{t \in \Gamma : t \notin B_{\ell_{\Gamma}} (n) \text{ and } \ell_{\Gamma} (t) \leq (\delta^{-1})^{\frac{1}{k}} \right\}.$$
Let $N(\delta, k, n) = |I(\delta, k, n)|$. Since any length function on a finitely generated discrete group will at least have linear growth, it follows that there exists some constant $C' > 0$ such that $$N (\delta, k, n) \geq C' \left (\delta^{-1} \right )^{\frac {1} {k}} - \left \lvert B_{\ell_{\Gamma}} (n) \right \rvert.$$ We define a finite subset $\mathcal{F} (\delta) \subseteq C_r^{\ast} (\mathcal G, \omega)$ as follows $:$
$$\mathcal{F} (\delta) = \left\{ \delta_t u_j : j = 1, \dots, s (\delta), \text{ and } t \in I(\delta, k, n) \right\}.$$
The cardinality of this set is precisely $|\mathcal{F} (\delta)| = s (\delta) \cdot N(\delta, k, n)$. 

We evaluate the stratified \clipnorm\ for an arbitrary element $\delta_t u_j \in \mathcal{F} (\delta).$ Let $m$ be the least positive integer such that $t \in B_{\ell_{\Gamma}} (m).$ Since $t \notin B_{\ell_{\Gamma}} (n),$ it follows that $m > n.$ Thus by the monotonicity of the Bowen metrics we have $d_m (x, y) \geq d_n(x, y)$ for all $x, y \in X$. Therefore,
$$L_t (u_j) = L_m (u_j) \leq L_n(u_j) \leq C \delta^{-1}.$$
Also,
$$L_{\ell}^{k}(\delta_t u_j) \leq \ell_{\Gamma} (t)^k \|u_j\|_{\infty} \leq \left( (\delta^{-1})^{\frac{1}{k}} \right)^k \cdot 1 = \delta^{-1}.$$
Taking the maximum of these components, we obtain $L^{\mathcal{K_D}, k}(\delta_t u_j) \leq \max(C\delta^{-1}, \delta^{-1}) \le C \delta^{-1}$. This shows that $C^{-1} \delta\ \mathcal F (\delta) \subseteq \mathcal L^{\mathcal K_D, k}_1.$
Let us consider the set of vectors $$\Omega (\delta) : = \{ \Lambda_\tau(\delta_t u_j)[\delta_e 1]\ :\ t \in I (\delta, k, n),\ j = 1, \cdots, s (\delta) \} \subseteq L^2(\tau).$$ Then again by Lemma \ref{Voiculescu_orthonormal}, $\Omega(\delta)$ is a finite  orthonormal set of cardinality $s (\delta) \cdot N(\delta, k, n)$ in $L^2(\tau).$

Let $D(\mathcal L^{\mathcal{K_D}, k}_1, \delta)$ denote the infimum of the dimensions of finite-dimensional subspaces of $L^2(\tau)$ required to approximate the unit ball $\mathcal L^{\mathcal{K}_D, k}_1$ within $\delta$. By the virtue of Voiculescu's lemma, we have
$$D \left (\Omega (\delta), \frac {1} {2} \right ) \geq \frac {3} {4} \cdot s (\delta) \cdot N(\delta, k, n) \geq \frac {3} {4} \cdot s (\delta) \cdot \left (C' \left (\delta^{-1} \right )^{\frac {1} {k}} - \left \lvert B_{\ell_{\Gamma}} (n) \right \rvert \right ).$$
Since $n \geq 1$ is a fixed natural number, independent of $\delta > 0,$ we can choose $\delta > 0$ sufficiently small such that $$C' \left (\delta^{-1} \right )^{\frac {1} {k}} - \left \lvert B_{\ell_{\Gamma}} (n) \right \rvert \geq \frac {C'} {2} \left (\delta^{-1} \right )^{\frac {1} {k}}.$$ For all such $\delta > 0$ we have
\Bea
\frac {\log D \left (\mathcal L^{\mathcal K_D, k}_1, 2^{-1} C^{-1} \delta \right )} {\log 2 C \delta^{-1}} & \geq & \frac {\log D \left (C^{-1} \delta\ \mathcal F (\delta), 2^{-1} C^{-1} \delta \right )} {\log 2 C \delta^{-1}} \\ & = & \frac {\log D \left (\mathcal F (\delta), 2^{-1} \right )} {\log 2 C \delta^{-1}} \\ & \geq & \frac {\log D \left (\Omega (\delta), 2^{-1} \right )} {\log 2 C \delta^{-1}} \\ & \geq & \frac {\log \left (\frac {3} {4} \cdot s (\delta) \cdot \frac {C'} {2} \left (\delta^{-1} \right )^{\frac {1} {k}} \right )} {\log 2 C \delta^{-1}}
\Eea
Taking limsup on both the sides of the above inequality we have
\Bea 
\mathrm {Mdim}_{L^{\mathcal K_D, k}} \left (C_r^{\ast} (\mathcal G, \omega) \right ) & \geq & \limsup\limits_{\delta \to 0^{+}} \frac {\log s (\delta)} {\log 2 C \delta^{-1}} + \frac {1} {k} \limsup\limits_{\delta \to 0^{+}} \frac {\log \delta^{-1}} {\log 2 C \delta^{-1}} \\ & = & \limsup\limits_{\delta \to 0^{+}} \frac {\log s (\delta)} {\log \delta^{-1}} \cdot \lim\limits_{\delta \to 0^{+}} \frac {\log 2 C \delta^{-1}} {\log \delta^{-1}} + \frac {1} {k} \cdot 1 \\ & = & \dim_{B} \left (X, d_n \right ) + \frac {1} {k}.
\Eea
Since this holds for all $n \geq 1,$ taking supremum over all $n \geq 1,$ the desired result follows.

\end{proof}

Now we shall produce the example. To that end, for each integer $m \ge 1$, let $Y_m = \{0, 1\}^m$ be equipped with the discrete topology. We write elements of $Y_m$ as $u = (u_0, \dots, u_{m-1})$. Define the product space
$$X = \prod_{m \ge 1} Y_m,$$ equipped with the product topology inherited from the discrete topology on $Y_m.$
Clearly $X$ is compact. For a fixed $m \ge 1$, we define a local distance function $\rho_m$ on $Y_m$ by
$$\rho_m(u, v) = \max_{0 \le j < m} \eta_m^j 1_{u_j \neq v_j},$$
where $\eta_m = 2^{-2^m}$ and $1_{u_j \neq v_j}$ is the indicator function. We define the global distance function $d : X \times X \to [0, \infty)$ by
$$d(x, y) = \sup_{m \ge 1} 2^{-m} \rho_m(x_m, y_m).$$
Let $T : X \rightarrow X$ be defined as $$T : = \prod\limits_{m \geq 1} S_m,$$ where $S_m : Y_m \rightarrow Y_m$ is the cyclic permutation given by
$$S_m \left (u_0, u_1, \cdots, u_{m - 1} \right ) = \left (u_1, \cdots, u_{m - 1}, u_{0} \right ).$$
We call this dynamical system $(X,d,T)$ the {\bf symbolic dynamical system}.
Now for each $n \geq 1,$ define the Bowen metric $d_n : X \times X \rightarrow [0, \infty)$ by
$$d_n (x, y) = \max\limits_{-n \leq k \leq n} d \left (T^{k} x, T^{k} y \right ),$$ $x,y \in X.$ Note that this metric coincides with the Bowen type metric used to define the dynamic stratified \clipnorm\ if the discrete group of integers is acting on the base space $X$ and is endowed with the word length function.  

\begin{thm}
For any $n \ge 1$, the Kolmogorov dimension of $X$ with respect to the Bowen metric $d_n$ is $\dim_B(X, d_n) = 2 n + 1.$
\end{thm}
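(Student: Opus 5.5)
The plan is to compute $d_n$ explicitly as a weighted "sup of indicators" metric on the coordinates of $X$. Counting $\delta$-balls then reduces to counting the coordinates whose weight exceeds $\delta$.

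\emph{Step 1: explicit form of $d_n$.} Write $x=(x_m)_{m\ge1}$ with $x_m=(x_{m,0},\dots,x_{m,m-1})$. Since $T^k$ acts on $Y_m$ by the cyclic shift $S_m^k$, we have
\begin{displaymath}
d(T^kx,T^ky)=\sup_{m}2^{-m}\max_{0\le j<m}\eta_m^{j}\,1_{x_{m,j+k}\neq y_{m,j+k}},
\end{displaymath}
with indices taken mod $m$. Maximizing over $|k|\le n$ gives
\begin{displaymath}
d_n(x,y)=\sup_{m\ge1}\ \max_{0\le i<m} w_n(m,i)\,1_{x_{m,i}\neq y_{m,i}},\qquad w_n(m,i)=2^{-m}\eta_m^{e_n(m,i)},
\end{displaymath}
where $e_n(m,i)=\min_{|k|\le n}\big((i-k)\bmod m\big)$.

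For $m>2n+1$, exactly the $2n+1$ coordinates $i\in\{0,\dots,n\}\cup\{m-n,\dots,m-1\}$ have exponent $0$, hence weight $2^{-m}$. For each $1\le j\le m-2n-1$, exactly one coordinate ($i=n+j$) has exponent $j$, hence weight $2^{-m-j2^m}$. For $m\le 2n+1$, all $m$ coordinates have exponent $0$.

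\emph{Step 2: counting balls.} Since $d_n$ is a supremum of weighted indicator functions, it is an ultrametric. Let $S(\delta)$ be the set of coordinates $(m,i)$ with $w_n(m,i)\ge\delta$. Two points agreeing on $S(\delta)$ are at distance $<\delta$. Conversely, two points disagreeing at a coordinate of weight $\ge\delta$ are at distance $\ge\delta$. Hence
\begin{displaymath}
2^{|S(2\delta)|}\le N(\delta,d_n)\le 2^{|S(\delta)|},
\end{displaymath}
and it suffices to show that $|S(\delta)|=(2n+1)\log_2\delta^{-1}+o(\log\delta^{-1})$.

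Put $L=\log_2\delta^{-1}$. The exponent-$0$ coordinates contribute $(2n+1)\lfloor L\rfloor+O(1)$. The coordinates of positive exponent in block $m$ number
\begin{displaymath}
c_m=\min\big(m-2n-1,\ \lfloor (L-m)/2^m\rfloor\big)_+ .
\end{displaymath}

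\emph{Step 3: the error term, which is the main point to check.} The key estimate is $\sum_m c_m=o(L)$, and this is where the super-exponential choice $\eta_m=2^{-2^m}$ is essential. Without it (bounding $c_m$ by $L/2^m$ alone) one would get an extra contribution of order $L$, and the dimension would come out as $2n+2$.

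Split the sum at $m_*=\lceil\log_2 L\rceil$. For $m\le m_*$, bound $c_m\le m$, giving $O((\log L)^2)$. For $m>m_*$, bound $c_m\le L/2^m$, giving $\le L\,2^{-m_*+1}\le 2$.

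Hence $|S(\delta)|=(2n+1)L+O((\log L)^2)$, and the same holds with $2\delta$ in place of $\delta$. Taking logarithms, dividing by $\log\delta^{-1}$ and letting $\delta\to0^+$ gives $\dim_B(X,d_n)=2n+1$, with the $\lim$ existing. So both the upper and lower bounds hold.
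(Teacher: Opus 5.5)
Your proof is correct and is essentially the paper's argument. Both use that $d_n$ is a supremum of weighted coordinate indicators, so covering numbers equal $2$ raised to the number of coordinates whose weight is at least the scale (the paper's ``visible coordinates''). Both then split the blocks at $\log_2$ of the scale, so that small blocks contribute $O((\log L)^2)$ and each large block contributes exactly its $2n+1$ exponent-zero coordinates. Your version is somewhat more explicit, with the exact exponents $e_n(m,i)$ and the two-sided bound $2^{|S(2\delta)|}\le N(\delta,d_n)\le 2^{|S(\delta)|}$. Your aside about $2n+2$ only describes the upper bound that the crude estimate $c_m\le L/2^m$ would give, not a value of the dimension.
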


\begin{proof}
Fix $n \ge 1$ and Let $\varepsilon \in (0, 1)$ be arbitrary. We can get hold of a unique integer $M \ge 0$ such that
$$2^{-(M+1)} < \varepsilon \le 2^{-M}.$$
Then $\varepsilon \to 0^{+}$ strictly forces $M \to \infty.$ Taking logarithm in both the sides we have
$$M \log 2 \le -\log \varepsilon < (M + 1) \log 2.$$
By the monotonicity of the covering number with respect to radius, we have
$$\log N(X, d_n, 2^{-M}) \le \log N(X, d_n, \varepsilon) \le \log N(X, d_n, 2^{-(M+1)}).$$
We now evaluate the covering number $N(X, d_n, 2^{-M})$ for sufficiently large $M$ such that $\left \lfloor \log_2 M \right \rfloor > 2 n.$
By definition of $d,$ for all $x, y \in X,$ we have
$$d_n(x, y) = \max_{- n \leq k \leq n} \sup_{m \ge 1} \max_{0 \le j < m} 2^{-m} \eta_m^{(j - k) \bmod m} 1_{x_{m, j} \neq y_{m, j}},$$
where $\eta_m = 2^{-2^m}$. Two points will lie in different Bowen balls of radius $2^{-M}$ if they agree on some coordinate $(m, j)$ which would contribute a $d_n$-distance of $2^{-M}$ or more. Henceforth these coordinates will be called visible coordinates.
Let $K_n(M)$ be the total number of such visible coordinates across all blocks. Since each coordinate is taking the values $0$ or $1,$ it follows that $N(X, d, 2^{-M}) = 2^{K_n(M)}.$ A coordinate $j$ in block $m$ is visible if there exists at least one $k \in \{0, \dots, n-1\}$ such that
$$2^{-m} \left(2^{-2^m}\right)^{(j - k) \bmod m} \ge 2^{-M} \iff m + 2^m \big((j - k) \bmod m\big) \le M.$$
Let $K_{m,n}$ be the number of visible coordinates in the $m$-th block $Y_m.$ We evaluate $K_{m,n}$ by partitioning the blocks into the following three regimes $:$

\begin{enumerate}
\item $1 \leq m \leq n:$ For any coordinate $j \in \{0, \dots, m-1\}$, we can choose $k = j.$ Since $m < n$, this $k$ is lying within $\{0, \dots, n-1\}.$ This choice forces $(j - k) \bmod m = 0.$ So the visibility condition reduces to $m \leq M,$ which is unconditionally true since $m < n < \left \lfloor \log_2 M \right \rfloor < M$. Thus, in this case every coordinate is visible and hence $$K_{m,n} = m.$$

\item $n < m \le \lfloor \log_2 M \rfloor:$ In this case the inequality involving visible coordinates may hold for multiple coordinates, but we can trivially bound it by the total size of the block, i.e., $K_{m,n} \le m$.

\item $\lfloor \log_2 M \rfloor < m \leq M:$
Here, $2^m > M \ge M - m$. For the visibility condition $2^m \big((j - k) \bmod m\big) \le M - m$ to hold, the multiplier must strictly be zero and hence $$(j - k) \bmod m = 0 \implies j \equiv k \pmod m.$$ Now since $0 \leq j < m,$ for $0 \leq k \leq n < m$, the congruence implies exact equality, i.e., $j = k$ and for $- n \leq k < 0,$ the congruence implies $j = m + k.$ Since $k$ ranges over $\{- n, \dots, n\},$ the set of visible coordinates in block $m$ is given by $$S : = \{0, \cdots, n \} \cup \{m - n, \cdots, m - 1 \}.$$
Since $\left \lfloor \log_2 M \right \rfloor > 2 n,$ it follows that $m - n > n.$ Thus all the elements of $S$ are dstinct and hence $K_{m, n} = \left \lvert S \right \rvert = 2 n + 1.$ 
    
\item $m > M :$ The condition $m \le M$ fails even if $j=k$ mod $m,$ so those blocks contribute nothing.  
\end{enumerate}

\vspace{2mm}

Summing up the visible coordinates across all blocks yields $K_n(M).$ Thus
\Bea
K_n(M) & = & \sum_{m=1}^{n} m + \sum_{m = n + 1}^{\lfloor \log_2 M \rfloor} K_{m,n} + \sum_{m=\lfloor \log_2 M \rfloor + 1}^M (2 n + 1) \\ & \leq & \sum\limits_{m = 1}^{\lfloor \log_2 M \rfloor} m + (2 n + 1) \left (M - \lfloor \log_2 M \rfloor \right ) \\ & = & \frac {\lfloor \log_2 M \rfloor \left (\lfloor \log_2 M \rfloor + 1 \right )} {2} + (2 n + 1) \left (M - \left \lfloor \log_2 M \right \rfloor \right ) \\ & = & (2 n + 1) M + O \left ((\log_2 M)^2 \right ).
\Eea
Also since each $K_{m, n} \geq 0,$ taking the minimal contribution over all blocks of size $1 \leq m \leq \left \lfloor \log_2 M \right \rfloor,$ it follows that $$K_n (M) \geq (2 n + 1) \left (M - \left \lfloor \log_2 M \right \rfloor \right ) = (2 n + 1) M + O \left (\log_2 M \right ).$$
Thus we have
$$\frac{(2 n + 1) M + O \left (\log_2 M \right )}{M+1} \le \frac{\log N(X, d_n, \varepsilon)}{-\log \varepsilon} \le \frac{(2 n + 1) (M+1) + O((\log_2(M+1))^2)}{M}.$$
Finally, we take the limit inferior and limit superior as $\varepsilon \to 0^+$ (which forces $M \to \infty$)
\Bea
2 n + 1 & = & \lim_{M \to \infty} \frac{(2 n + 1) M + O \left (\log_2 M \right )}{M+1} \\ & \leq & \liminf_{\varepsilon \to 0^+} \frac{\log N(X, d_n, \varepsilon)}{-\log \varepsilon} \\
& \leq & \limsup_{\varepsilon \to 0^+} \frac{\log N(X, d_n, \varepsilon)}{-\log \varepsilon} \\ & \leq & \lim_{M \to \infty} \frac{(2 n + 1) (M+1) + O \left (\left (\log_2 (M + 1) \right )^{2} \right )}{M} \\ & = & 2 n + 1.
\Eea
By the Squeeze Theorem, the limits coincide. Therefore, $\dim_B(X, d_n) = 2 n + 1.$ In particular, the Kolmogorov dimension of $X$ with respect to the base metric is $1$.
\end{proof}

\begin{thm}
The dynamical system $(X, d, T)$ is uniformly equicontinuous. That is, for every $\varepsilon > 0$, there exists a $\delta > 0$ such that for all $k \in \mathbb Z$ and for any pair of points $x, y \in X$ with $d(x, y) < \delta$, we have $d(T^k x, T^k y) < \varepsilon$. In other words, the family of maps $\left \{T^{k} \right \}_{k \in \mathbb Z}$ is uniformly equicontinuous on $X.$
\end{thm}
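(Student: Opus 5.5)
The plan is a direct $\varepsilon$–$\delta$ argument. It exploits two facts. First, $d$ is a weighted supremum over blocks with weights $2^{-m}$, so the blocks with large $m$ contribute uniformly little. Second, on each block $Y_m$ the map $T$ acts as a permutation of the coordinates, so every iterate $T^k$ (for $k \in \mathbb{Z}$) maps agreeing blocks to agreeing blocks. The point is to choose $\delta$ independently of $k$ by forcing \emph{exact} agreement on finitely many blocks, rather than tracking how $T^k$ distorts $\rho_m$. That distortion is unbounded in $k$, because the weights $\eta_m^j$ get cyclically rotated.

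Concretely, fix $\varepsilon > 0$ and choose $M \ge 1$ with $2^{-M} < \varepsilon$. Since $\rho_m \le 1$ (the term with $j = 0$ has weight $\eta_m^0 = 1$, and all other weights are smaller), for any $z, w \in X$ we have
\begin{displaymath}
\sup_{m > M} 2^{-m} \rho_m(z_m, w_m) \le 2^{-(M+1)} < \varepsilon.
\end{displaymath}
So it suffices to guarantee that $T^k x$ and $T^k y$ coincide on the blocks $1 \le m \le M$ for every $k$.

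Next, observe that the nonzero values of $\rho_m$ lie in the finite set $\{\eta_m^j : 0 \le j < m\}$, whose minimum is $\eta_m^{m-1} > 0$. Set
\begin{displaymath}
\delta := \min_{1 \le m \le M} 2^{-m} \eta_m^{m-1} > 0.
\end{displaymath}
If $d(x,y) < \delta$, then for each $m \le M$ we get $2^{-m}\rho_m(x_m, y_m) < 2^{-m}\eta_m^{m-1}$. Hence $\rho_m(x_m, y_m) < \eta_m^{m-1}$, which forces $\rho_m(x_m, y_m) = 0$, that is, $x_m = y_m$.

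Since $T^k = \prod_m S_m^k$ acts blockwise, $x_m = y_m$ implies $(T^k x)_m = S_m^k x_m = S_m^k y_m = (T^k y)_m$ for all $k \in \mathbb{Z}$. Therefore $d(T^k x, T^k y)$ only receives contributions from the blocks $m > M$, and by the first step it is $< \varepsilon$.

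There is no genuine obstacle here. The only point needing care is the observation that $\delta$ must be chosen so small that it detects \emph{every} coordinate of the first $M$ blocks, including the ones carrying the tiny weights $\eta_m^{m-1}$. That choice is exactly what makes the estimate uniform in $k$, even though $S_m^k$ moves coordinates of small weight into positions of weight $1$.
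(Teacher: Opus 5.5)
Your proof is correct and follows essentially the same route as the paper. You truncate the blocks $m > M$ using the weights $2^{-m}$, and you choose $\delta$ below the smallest nonzero block distance $2^{-m}\eta_m^{m-1}$ for $m \le M$ to force exact agreement on the first $M$ blocks, which then persists under every $T^k$ because $T$ acts blockwise. The paper's extra factor $\tfrac12$ in $\delta$ is unnecessary given the strict inequality $d(x,y) < \delta$, just as your argument shows.
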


\begin{proof}
Let $\varepsilon > 0$ be given. Choose an integer $M \ge 1$ sufficiently large such that $2^{-M} < \varepsilon.$
Note that for any $m \geq 1,$ the maximum possible value of $\rho_m$ on $Y_m$ is bounded above by $1.$ Therefore,
$$\sup_{m > M} 2^{-m} \rho_m(S_m^k x_m, S_m^k y_m) \le \sup_{m > M} 2^{-m} = 2^{-(M+1)} < 2^{-M} < \varepsilon.$$
Also note the minimum possible non-zero distance within block $m$ occurs when two points disagree only at the coordinate bearing the smallest weight, $j = m-1,$ which is given by $2^{-m} \eta_m^{m-1} = 2^{-m} 2^{-2^m(m-1)}.$
Define $\delta > 0$ in such a way that
$$ \delta = \frac{1}{2} \min_{1 \leq m \leq M} \left( 2^{-m} 2^{-2^m(m-1)} \right) > 0.$$
Now suppose that $x, y \in X$ are any two points satisfying $d(x, y) < \delta.$
If $x_m \neq y_m$ for any block $m \leq M,$ then $d(x, y)$ would be bounded below by the minimum non-zero distance for that block, which is strictly greater than $\delta.$ This contradicts our assumption that $d(x, y) < \delta.$
Therefore, it follows that $x_m = y_m$ for all $1 \leq m \leq M.$ Thus, for any $k \in \mathbb{Z}$
$$S_m^k x_m = S_m^k y_m \implies \rho_m(S_m^k x_m, S_m^k y_m) = 0 \quad \text{for all } 1 \le m \le M.$$
Then for any $k \in \mathbb Z$ and for any pair of points $x, y \in X$ with $d (x, y) < \delta$ we have
\Bea
d(T^k x, T^k y) & = & \max \left \{ \sup_{1 \le m \le M} 2^{-m}(0), \sup_{m > M} 2^{-m} \rho_m (S_m^k x_m, S_m^k y_m) \right \} \\
& \leq & \sup_{m > M} 2^{-m} \rho_m (S_m^{k} x_m, S_m^{k} y_m) \\
& < & \varepsilon.
\Eea
This shows that the family of maps $\{T^k\}_{k \in \mathbb{Z}}$ is uniformly equicontinuous on $X,$ as required.
\end{proof}
Now consider the transformation groupoid $\mathcal{G}=\mathbb{Z}\rtimes X$ associated with the symbolic $C^{\ast}$-dynamical system $(X,d,T)$. Consider the length function $\ell$ on $\mathcal{G}$ induced by the word length function $\ell_{\mathbb{Z}}$ on $\mathbb{Z}$. Then by Remark \ref{grouplengthtogroupoidlength}, $\mathcal{G}$ has the polynomial growth property with growth exponent $1$ with respect to $\ell$ such that $\ell$ satisfies \ref{Lip-length}.
\begin{cor}\label{dynamic_complexity}
Consider the transformation groupoid $\mathcal G = \mathbb Z \rtimes X$ associated to the symbolic $C^{\ast}$-dynamical system $(X, d, T).$ Let $\ell$ be the length function induced by $\ell_{\mathbb{Z}}$. Then for all $k \geq \left \lceil \frac{1}{2} \right \rceil = 1,$ we have 
$\mathrm {Mdim}_{L^{\mathcal K_D, k}} \left (C_r^{\ast} (\mathcal G, \omega) \right ) = \infty.$ Also by Theorem \ref{dynamic_finite_bound}, the action cannot be uniformly Lipschitz.
\end{cor}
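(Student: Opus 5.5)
The plan is to combine the general lower bound for the dynamic stratification (the theorem just before the construction of the symbolic system) with the computation $\dim_B(X,d_n)=2n+1$. The first task is to check that the hypotheses of that lower bound theorem hold for $\mathcal G=\mathbb Z\rtimes X$ with the length $\ell(t,x)=\ell_{\mathbb Z}(t)=|t|$.

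This length is continuous and proper. It satisfies Condition \eqref{Lip-length} trivially, since it does not depend on $x$. By Remark \ref{grouplengthtogroupoidlength} it has polynomial growth of exponent $r=1$. Lemma \ref{polytorapid} then gives twisted rapid decay for any normalized $2$-cocycle $\omega$, with any decay exponent $p>\tfrac12$. For a given $k\ge1$ we choose $p\in(\tfrac12,k)$. Theorem \ref{CQMS} then shows that $L^{\mathcal K_D,k}$ is a \clipnorm. The hypothesis $k>p$ of the lower bound theorem also holds.

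Next we identify the metrics. For $\ell_{\Gamma}=\ell_{\mathbb Z}$ the sets are $E_n=\{t:|t|\le n\}$, so the Bowen-type metric used in $\mathcal K_D$ is exactly $d_n(x,y)=\max_{|j|\le n}d(T^jx,T^jy)$. This is the metric of the symbolic system, as already noted in the remark following the definition of the dynamic stratification.

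Applying the lower bound theorem gives
\[
\mathrm{Mdim}_{L^{\mathcal K_D,k}}(C_r^{\ast}(\mathcal G,\omega))\ \ge\ \sup_{n\ge1}\dim_B(X,d_n)+\tfrac1k\ =\ \sup_{n\ge1}(2n+1)+\tfrac1k\ =\ +\infty,
\]
using the preceding theorem for $\dim_B(X,d_n)$. This proves the first claim.

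For the second claim we argue by contradiction. Suppose the action were uniformly Lipschitz. Theorem \ref{dynamic_finite_bound} applies with $r=1$ and $M_X=\dim_B(X,d)=1$, the latter by the preceding theorem (its case $n=0$, or directly from its last line). It would give, for every $k>\tfrac12$,
\[
\mathrm{Mdim}\ \le\ \frac{2}{2k-1}+\frac{2k+1}{2k-1}\ <\ \infty .
\]
This contradicts the first claim, so the action is not uniformly Lipschitz.

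The only delicate point is making sure $E_n$ matches the index range $|j|\le n$, in particular that $\ell_{\Gamma}$ induced from $\ell$ is again the word length. This holds because $\ell$ is fibre-independent, so $\ell_{\Gamma}(t)=\sup_x\ell(t,x)=|t|$. The rest is direct citation of earlier results.
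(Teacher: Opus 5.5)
Your proposal is correct and is essentially the paper's argument. You check, as the paper does in the paragraph preceding the corollary, that $\ell(t,x)=|t|$ gives polynomial growth of exponent $1$, hence rapid decay for any $p\in(\tfrac12,k)$, and then combine the lower bound $\mathrm{Mdim}_{L^{\mathcal K_D,k}}\geq\sup_{n\ge1}\dim_B(X,d_n)+\frac1k$ with $\dim_B(X,d_n)=2n+1$; the non-Lipschitz claim then follows, as you argue, by contradiction with the finite bound of Theorem~\ref{dynamic_finite_bound}, using $r=1$ and $M_X=\dim_B(X,d)=1<\infty$.
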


\subsection{The case of exponential growth} In this subsection, we consider cocycle twists of transformation groupoids $\mathcal{G}=\Gamma \rtimes X,$ where $\Gamma$ is a countable discrete group having exponential growth. For further details regarding groups of exponential growth, the readers can refer to \cite{delaharpe}*{Chapter VII}. Fix some normalized $2$-cocycle $\omega$ on $\mathcal{G}$. Let $\ell$ be a length function on $\mathcal{G}$ such that the groupoid has twisted rapid decay property with respect to $\ell$. We would like to mention that even if the group $\Gamma$ has rapid decay property, the groupoid might fail to admit rapid decay property. This typically happens when the action is free. Then the corresponding groupoid is prinicipal étale. By \cite{Weygandt-2024-Rapid-decay-for-etale-gpd}*{Theorem 3.2}, the groupoid has rapid decay if and only if it has polynomial growth property. Therefore, if a group $\Gamma$ with exponential growth property has a free action, the resulting groupoid can not have rapid decay property. So to obtain rapid decay property one has to look at non-free actions of groups of exponential growth.\\
Consider the \clipnorm\ $L^{\mathcal{K},k}$ coming from some metric stratification where $k>p$, $p$ is the exponent of rapid decay. Recall the length function $\ell_{\Gamma}$ on $\Gamma$ induced from the length function $\ell$ on the groupoid.

\begin{rem}
For a transformation groupoid arising from the action of a discrete group with exponential growth on a compact Hausdorff space $X$, we always assume the underlying group to be countable unless otherwise specified.
\end{rem}

\begin{thm}\label{stratification_infinity}
Let $\mathcal G$ be the transformation groupoid associated with an action of a discrete group $\Gamma$ on a compact metric space $(X, d);$ $\omega$ be a normalized $2$-cocycle on $\mathcal{G};$ $\ell$ be a continuous proper length function on $\mathcal G$ with respect to which $\mathcal G$ possesses the property of rapid decay with decay exponent $p > 0.$ If $\Gamma$ has exponential growth with respect to the length function $\ell_{\Gamma}$, then for all $k > p,$ $\mathrm {Mdim}_{L^{\mathcal K, k}} \left (C_r^{\ast} (\mathcal G,\omega) \right ) = \infty.$
\end{thm}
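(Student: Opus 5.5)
We follow the lower bound argument of Theorem \ref{Mdim lower bound}, but use only the group directions and let exponential growth force the dimension to blow up. Fix $k>p$ and $\delta>0$, and set
\[
I(\delta,k):=\{t\in\Gamma:\ \ell_{\Gamma}(t)\le (\delta^{-1})^{1/k}\}, \qquad \mathcal F(\delta):=\{\delta_t 1: t\in I(\delta,k)\}.
\]
Each $\delta_t 1$ has constant coefficient $a_t=1$. Hence $L_t(a_t)=0$ for every metric $d_t$ of the stratification $\mathcal K$, so the stratified part of $L^{\mathcal K,k}$ plays no role and the argument works for an arbitrary metric stratification.

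The first step is the Lip-bound. As in the proof of Theorem \ref{Mdim lower bound}, $L_\ell^k(\delta_t 1)\le \ell_\Gamma(t)^k\le \delta^{-1}$, since $\ell(t,x)\le \ell_\Gamma(t)$ and $\|\delta_t\|_{\omega,\mathrm{red}}=1$. To justify the first inequality, use $\|M_\ell^k(\delta_t1)\|_I\le \ell_\Gamma(t)^k$. For the second component of the $I$-norm, apply inverse invariance of $\ell$: $\ell(t^{-1},t\cdot x)=\ell(t,x)\le\ell_\Gamma(t)$. It follows that $\delta\,\mathcal F(\delta)\subseteq \mathcal L_1^{\mathcal K,k}$, because $\|\delta\,\delta_t\|\le 1$ for $\delta<1$.

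The second step is orthogonality. Take any probability measure $\mu$ on $X$ and the state $\tau=\mu\circ\mathbb E$. By Lemma \ref{Voiculescu_orthonormal}, the vectors $\Lambda_\tau(\delta_t1)[\delta_e1]$ with $t\in I(\delta,k)$ form an orthonormal set of cardinality $N(\delta,k)=|I(\delta,k)|$. The map $a\mapsto\Lambda_\tau(a)[\delta_e1]$ is norm-contractive from $C^*_r(\mathcal G,\omega)$ into $L^2(\tau)$. So any subspace that $\tfrac12$-approximates $\mathcal F(\delta)$ in the $C^*$-norm has an image that $\tfrac12$-approximates this orthonormal set, with no larger dimension. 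Voiculescu's lemma (Theorem \ref{Voiculescu}) and scaling then give
\[
D\bigl(\mathcal L_1^{\mathcal K,k},\tfrac{\delta}{2}\bigr)\ \ge\ D\bigl(\mathcal F(\delta),\tfrac12\bigr)\ \ge\ \tfrac34\,N(\delta,k).
\]

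The third step uses exponential growth. There are $c>0$ and $\lambda>1$ with $|B_{\ell_\Gamma}(n)|\ge c\lambda^n$ for all large $n$, so $N(\delta,k)\ge c\,\lambda^{\lfloor\delta^{-1/k}\rfloor}$. Therefore
\[
\frac{\log D(\mathcal L_1^{\mathcal K,k},\delta/2)}{\log (2\delta^{-1})}\ \ge\ \frac{\log(3c/4)+(\delta^{-1/k}-1)\log\lambda}{\log 2+\log\delta^{-1}} \longrightarrow\infty
\]
as $\delta\to0^+$. Taking the $\limsup$ yields $\mathrm{Mdim}_{L^{\mathcal K,k}}(C^*_r(\mathcal G,\omega))=\infty$. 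Rapid decay and $k>p$ enter only through Theorem \ref{CQMS}, which ensures we are dealing with a genuine CQMS.

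The main obstacle is what ``exponential growth with respect to $\ell_\Gamma$'' is taken to mean. If it means $\liminf_n \frac1n\log|B_{\ell_\Gamma}(n)|>0$, the third step goes through as written. If only $\limsup_n$ is assumed, I would pass to the subsequence $n_j$ on which the growth is exponential and set $\delta_j=n_j^{-k}$. Since Mdim is defined via a $\limsup$, a subsequence suffices. A minor technical point is that $\ell_\Gamma$ may be real-valued rather than integer-valued, but only the monotone ball counts are used, so this does not affect the argument.
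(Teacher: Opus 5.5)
Your proposal is correct and follows the paper's proof essentially step for step. You use the same family $\{\delta_t 1:\ell_\Gamma(t)\le\delta^{-1/k}\}$ with vanishing stratified part, the same bound $L^{\mathcal K,k}(\delta_t1)=L_\ell^k(\delta_t1)\le\|M_\ell^k(\delta_t1)\|_I\le\ell_\Gamma(t)^k$, orthonormality in $L^2(\tau)$ via Lemma \ref{Voiculescu_orthonormal}, then Voiculescu's lemma and exponential growth of the balls. Your extra remarks make explicit points the paper leaves implicit: contractivity of $a\mapsto\Lambda_\tau(a)[\delta_e1]$, the floor in the exponent, and the subsequence argument when growth is exponential only along a $\limsup$.
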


\begin{proof}
Since $\Gamma$ has exponential growth with respect to $\ell_{\Gamma},$ there exists $M > 1$ and $N \in \mathbb N$ such that for all $n > N$ we have 
$$\left \lvert \left \{t \in \Gamma\ : \ell_{\Gamma} (t) \leq n \right \} \right \rvert \geq M^{n}.$$
Fix some $\delta > 0$ arbitrarily in such a way that $\delta < \frac {1} {N^{k}}.$ Let $$U_{\delta} : = \left \{\delta_t 1\ :\ \ell_{\Gamma} (t) \leq \left (\delta^{-1} \right )^{\frac {1} {k}} \right \} \subseteq C_c (\mathcal G, \omega)_1.$$ Then $$\left \lvert U_{\delta} \right \rvert \geq M^{\left (\delta^{-1} \right )^{\frac {1} {k}}}.$$ Also, since $$L^{\mathcal K, k} \left (\delta_{t} 1 \right ) = L_{\ell}^{k} \left (\delta_t 1 \right ) \leq \left \|M_{\ell}^{k} \left (\delta_{t} 1 \right ) \right \|_{I} = \sup\limits_{x \in X} \ell (t, x)^{k} \leq \ell_{\Gamma} (t)^{k},$$ for all $t \in \Gamma,$ it follows that $\delta U_{\delta} \subseteq \mathcal L_1^{\mathcal K, k}.$ 

Let $\mu$ be a probability measure on $X.$ Then recall the GNS triple $(L^{2}(\tau),\Lambda_{\tau},[\delta_{e}1])$.
Let us consider the set of vectors $$\Omega_{\delta} = \{ \Lambda_\tau \left (\delta_t 1 \right ) [\delta_e 1]\ :\ t \in U_{\delta} \} \subseteq L^2(\tau).$$ 
Again by Lemma \ref{Voiculescu_orthonormal}, $\Omega_{\delta}$ is a finite set of orthonormal vectors in $L^{2} (\tau).$ Therefore, again by the virtue of Voiculescu's lemma, we have 
$$D \left (\Omega_{\delta}, 2^{-1} \right ) \geq \frac {3} {4} \left \lvert \Omega_{\delta} \right \rvert = \frac {3} {4} \left \lvert U_{\delta} \right \rvert \geq \frac {3} {4} M^{\left ( \delta^{-1} \right )^{\frac {1} {k}}}.$$ Then we have
\Bea \mathrm {Mdim}_{L^{\mathcal K, k}} \left (C_{r}^{\ast} (\mathcal G, \omega) \right ) & = & \limsup\limits_{\delta \to 0^{+}} \frac {\log D \left (\mathcal L^{\mathcal K, k}_{1}, 2^{-1} \delta \right )} {\log 2 \delta^{-1}} \\ & \geq & \limsup_{\delta \to 0^{+}} \frac {\log D \left (\delta U_{\delta}, 2^{-1} \delta \right )} {\log 2 \delta^{-1}} \\ & = & \limsup_{\delta \to 0^{+}} \frac {\log D \left (U_{\delta}, 2^{-1} \right )} {\log 2 \delta^{-1}} \\ & \geq & \limsup\limits_{\delta \to 0^{+}} \frac {\log D \left (\Omega_{\delta}, 2^{-1} \right )} {\log 2 \delta^{-1}} \\ & \geq & \limsup\limits_{\delta \to 0^{+}} \frac {\log M^{\left (\delta^{-1} \right )^{\frac {1} {k}}}} {\log \delta^{-1}} \\ & = & \log M\ \limsup\limits_{\delta \to 0^{+}} \frac {\left (\delta^{-1} \right )^{\frac {1} {k}}} {\log \delta^{-1}} \\ & = & \infty. 
\Eea
This completes the proof.
\end{proof}
\begin{rem}
    As observed earlier, taking $X$ to be a single point $\{\ast\}$ and considering the trivial action of the group, one can recover Theorem 3.16 of \cite{joardar2025metrics} and extend it for the cocycle twisted group $C^{\ast}$-algebra.
\end{rem}

In fact, we shall show that any cocycle twisted groupoid $C^{\ast}$-algebra where the acting group has exponential growth, has the metric dimension $+\infty$ generically. We shall prove that by exploiting entropy-like quantity of automorphisms. Recall that an automorphism $\varphi$ of $\Gamma$ and a homeomorphism $\psi$ of $X$ combine to produce an automorphism of $C^{\ast}_{r}(\mathcal{G},\omega)$ provided for any $f \in C(X),$ $t, u \in \Gamma$ and $x \in X,$ we have
$$\hat{\psi}(\alpha_t (f)) = \alpha_{\varphi(t)}(\hat{\psi}(f)),$$ and,
$$\omega (\varphi (t), \varphi (u), x) = \omega (t, u, \psi (x)),$$
where $\hat{\psi}$ is the induced automorphism on $C(X)$. The automorphism $\Phi$ is given on the dense $\ast$-subalgebra $C_{c}(\mathcal{G},\omega)$ by
\begin{displaymath}
    \Phi(\sum\delta_{t}a_{t})=\sum\delta_{\phi(t)}\hat{\psi}(a_{t}).
\end{displaymath}

\vspace{2mm}

Let $\Phi$ be an automorphism of $C^{\ast}_{r}(\mathcal{G},\omega)$ induced from automorphism $\varphi$ of $\Gamma$ and a homeomorphism $\psi$ of $X$; $A$ be any subspace of $C^{\ast}_{r}(\mathcal{G},\omega)$. We denote the collection of all finite subsets of $A$ in the unit ball of $A$ by $Pf(A)_{1}$. For any $\Omega\in Pf(A_1),$ denote $\prod\limits_{i=1}^{n} \Phi^{i - 1}(\Omega)$ by $\Omega_{n}.$ For $\Omega\in Pf(A_{1})$ and $\delta>0$, define
    \begin{align*}
&&\mathrm{Ent}_{A} (\Phi,\Omega,\delta) & =  \limsup\limits_{n\to\infty}\frac{1}{n}\log D \left (\Omega_n,\delta \right ),\\
&&\mathrm{Ent}_{A}(\Phi,\Omega) & = \sup_{\delta>0}\mathrm{Ent}_{A}(\Phi,\Omega,\delta),\\
&&\mathrm{Ent}_{A}(\Phi) & =  \sup_{\Omega \in Pf(A_1)}\mathrm{Ent}_{A}(\Phi,\Omega).
\end{align*}
\begin{rem}
    Note that when $A=\mathrm{Dom}(L)$ for a CQMS $(B,L)$, our definition of $\mathrm{Ent}_{A}(\alpha)$ coincides with the definition of product entropy of an automorphism as given in \cite{Kerr}*{Definition 5.2}. In our notation the product entropy of an automorphism $\Phi$ will be $\mathrm{Ent}_{\mathrm{Dom}(L)}(\Phi)$. 
\end{rem}
We recall the definition of the algebraic entropy of an automorphism $\alpha$ of a discrete group $\Gamma.$
\begin{defn}\label{algentp}
The algebraic entropy of an automorphism $\phi$ of a discrete group $\Gamma$ is defined to be
\begin{displaymath}
 h_{\mathrm{alg}}(\phi):=\sup_{\mathcal{F}\subseteq\Gamma \ \textrm{finite}}\limsup\limits_{n\to\infty}\frac{1}{n}\log\lvert\mathcal{F}_n\rvert,
\end{displaymath}
where $\mathcal F_n : = \mathcal{F}\phi(\mathcal{F}) \cdots \phi^{n-1} (\mathcal F).$
\end{defn}

We say a subspace $A$ of $C^{\ast}_{r}(\mathcal{G},\omega)$ contains $\Gamma$ if it contains the subset $\{\delta_{t}1:t\in\Gamma\}$. Recall that the set $\{[\delta_{t}1]:t\in\Gamma\}$ is an orthonormal set in the GNS space $L^{2}(\tau)$. Then applying Voiculescu's lemma, a straightforward adaptation of the argument of the proof of \cite{chattopadhyay2026metric}*{Theorem 3.8} gives us the following theorem: 

\begin{thm}
    \label{entropy_lowerbound}
    Let $\Phi$ be an automorphism of a twisted transformation groupoid $C^{\ast}$-algebra $C^{\ast}_{r}(\mathcal{G},\omega)$ induced by an automorphism $\varphi$ of $\Gamma$ and a homeomorphism $\psi$ of $X$; $A$ be a subspace of $C^{\ast}_{r}(\mathcal{G},\omega)$ that contains $\Gamma$. Then
    \begin{displaymath}
        h_{\mathrm{alg}}(\varphi)\leq \mathrm{Ent}_{A}(\Phi).
    \end{displaymath}
\end{thm}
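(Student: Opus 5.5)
The plan is to reduce the statement to a lower bound on $D(\Omega_n,\delta)$ obtained from Voiculescu's lemma, using orthonormality of the group elements in the GNS space $L^2(\tau)$.

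Fix a finite set $\mathcal F\subseteq\Gamma$. Since the algebraic entropy is a supremum, we may assume without loss of generality that $e\in\mathcal F$. Set $\Omega:=\{\delta_t1: t\in\mathcal F\}$. Each $\delta_t 1$ is a unitary in $C^{\ast}_{r}(\mathcal G,\omega)$, so it has norm one, and it lies in $A$ because $A$ contains $\Gamma$; hence $\Omega\in Pf(A_1)$.

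The first step is to compute the iterated products $\Omega_n=\Omega\,\Phi(\Omega)\cdots\Phi^{n-1}(\Omega)$. Because $\hat\psi(1)=1$, we have $\Phi^{i}(\delta_t1)=\delta_{\varphi^{i}(t)}1$. Twisted multiplication gives
\[
(\delta_{t}1)\ast_\omega(\delta_u1)=\delta_{tu}\,\omega(t,u,\cdot),
\]
where $\omega(t,u,\cdot)$ is a continuous $\mathbb T$-valued function on $X$. Inductively, every element of $\Omega_n$ therefore has the form $\delta_s c$ with $s\in\mathcal F_n$ and $c\in C(X)$ unimodular. Conversely, every $s\in\mathcal F_n$ occurs in this way. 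Choose one representative $\delta_s c_s\in\Omega_n$ for each $s\in\mathcal F_n$. Then $D(\Omega_n,\delta)\ge D(\{\delta_sc_s\}_{s\in\mathcal F_n},\delta)$.

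The second step is to pass to the Hilbert space. Take $\tau=\mu\circ\mathbb E$ for any probability measure $\mu$ on $X$. The map $a\mapsto\Lambda_\tau(a)[\delta_e1]$ is norm-contractive from $C^{\ast}_{r}(\mathcal G,\omega)$ to $L^2(\tau)$. Consequently, if a subspace $Z$ $\delta$-approximates a set $S$ in the $C^\ast$-norm, then its image $\delta$-approximates the image of $S$ in $L^2(\tau)$, with no increase in dimension. This gives
\[
D(\{\delta_sc_s\},\delta)\ge D(\{\Lambda_\tau(\delta_sc_s)[\delta_e1]\},\delta).
\]
Next, the computation in Lemma \ref{Voiculescu_orthonormal} applies. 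The conditional expectation kills $\delta_{s^{-1}s'}$ for $s\neq s'$, and for $s=s'$ one gets $\mu(|c_s|^2)=1$. These vectors are therefore orthonormal, and Voiculescu's lemma (Theorem \ref{Voiculescu}) yields, for $\delta<1$,
\[
D(\Omega_n,\delta)\ge(1-\delta^2)\lvert\mathcal F_n\rvert .
\]

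The final step is to take limits. Dividing $\log D(\Omega_n,\delta)$ by $n$ and letting $n\to\infty$ gives $\mathrm{Ent}_A(\Phi,\Omega,\delta)\ge\limsup\frac1n\log\lvert\mathcal F_n\rvert$, since the constant $\log(1-\delta^2)$ disappears after division by $n$. Taking the supremum over $\delta$, then over $\Omega$, and finally over $\mathcal F$ gives $h_{\mathrm{alg}}(\varphi)\le\mathrm{Ent}_A(\Phi)$.

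The main obstacle is bookkeeping in the first step. One must verify that the cocycle factors accumulated in products only contribute unimodular coefficient functions, so that orthonormality survives in the twisted setting. One must also check that $D(\cdot,\delta)$ is computed in $C^{\ast}_{r}(\mathcal G,\omega)$ rather than inside $A$; this is harmless, because in either case the contraction to $L^2(\tau)$ gives the lower bound.
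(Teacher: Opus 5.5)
Your proposal is correct and follows essentially the same route as the paper. The paper also takes $\Omega=\{\delta_t1:t\in\mathcal F\}$, pushes the products into the GNS space $L^2(\tau)$, where they become an orthonormal family indexed by $\mathcal F_n$, and applies Voiculescu's lemma, adapting the group-algebra argument. Your explicit check that the accumulated cocycle factors only produce unimodular coefficients $c_s$, so that the vectors $\Lambda_\tau(\delta_s c_s)[\delta_e1]$ stay orthonormal, is a detail the paper leaves implicit, but it is exactly what the adaptation requires.
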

Using the fact that the algebraic entropy of the trivial automorphism of a finitely generated discrete group with exponential growth is $+\infty$ (\cite{Dikranjan2013TopologicalEA}*{Proposition 5.3.13.}), we get the following corollary:
\begin{cor}
\label{infinityentropy}
   Let $\mathcal{G}=\Gamma\rtimes X$ and $\omega$ be some $2$-cocycle on $\mathcal{G}$ where $\Gamma$ has exponential growth. For any subspace $A$ of $C^{\ast}_{r}(\mathcal{G},\omega)$ containing $\Gamma$, 
    \begin{displaymath}\mathrm{Ent}_{A}(\mathrm{id})=+\infty\end{displaymath}
\end{cor}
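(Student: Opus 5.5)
The plan is to apply Theorem \ref{entropy_lowerbound} with $\Phi=\mathrm{id}$, induced by $\varphi=\mathrm{id}_{\Gamma}$ and $\psi=\mathrm{id}_X$. First we check that this pair is admissible, i.e.\ that it satisfies the compatibility conditions stated before the definition of $\mathrm{Ent}_A$. Both conditions are tautological: $\hat{\psi}(\alpha_t(f))=\alpha_t(f)=\alpha_{\varphi(t)}(\hat\psi(f))$ and $\omega(t,u,x)=\omega(\varphi(t),\varphi(u),\psi(x))$. The induced automorphism on $C_c(\mathcal G,\omega)$ is then $\sum\delta_t a_t\mapsto\sum\delta_t a_t$, so it extends to the identity of $C^{\ast}_r(\mathcal G,\omega)$. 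Since $A$ contains $\Gamma$ by hypothesis, Theorem \ref{entropy_lowerbound} gives $h_{\mathrm{alg}}(\mathrm{id}_\Gamma)\le \mathrm{Ent}_A(\mathrm{id})$.

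Next we show $h_{\mathrm{alg}}(\mathrm{id}_\Gamma)=+\infty$. With $\varphi=\mathrm{id}$ we get $\mathcal F_n=\mathcal F^n$. If $\Gamma$ is finitely generated with exponential growth, take $\mathcal F=S\cup\{e\}$ for a finite symmetric generating set $S$. Then $\mathcal F^n$ is the word-length ball $B_S(n)$, so $\limsup\frac1n\log|\mathcal F^n|=\lim\frac1n\log|B_S(n)|=\log\lambda_S>0$. This alone only gives a positive lower bound. To reach $+\infty$ we pass to powers: replacing $\mathcal F$ by $\mathcal F^{m}=B_S(m)$ gives $\frac1n\log|B_S(mn)|\to m\log\lambda_S$. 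Taking the supremum over $m$ yields $h_{\mathrm{alg}}(\mathrm{id})=+\infty$, which is exactly \cite{Dikranjan2013TopologicalEA}*{Proposition 5.3.13}, and we can simply cite that result.

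The only delicate point is the meaning of ``exponential growth'' when $\Gamma$ is merely countable, as in the standing remark of this subsection. If exponential growth is measured with respect to a proper length function $\ell_\Gamma$ rather than a word length, we would apply the same argument to a finitely generated subgroup $\Gamma_0$ of exponential growth. The supremum in Definition \ref{algentp} runs over finite subsets of $\Gamma$, so finite subsets of $\Gamma_0$ are allowed and $h_{\mathrm{alg}}(\mathrm{id}_\Gamma)\ge h_{\mathrm{alg}}(\mathrm{id}_{\Gamma_0})=+\infty$. Since the corollary is stated for the finitely generated setting fixed in Section 2, we expect to take $\Gamma$ finitely generated and just cite the proposition. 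Combining the two steps gives $\mathrm{Ent}_A(\mathrm{id})=+\infty$.
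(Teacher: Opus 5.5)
Your proposal is correct and follows the paper's route. The paper likewise applies Theorem \ref{entropy_lowerbound} to $\Phi=\mathrm{id}$ (induced by $\mathrm{id}_\Gamma$ and $\mathrm{id}_X$) and cites $h_{\mathrm{alg}}(\mathrm{id}_\Gamma)=+\infty$ for finitely generated $\Gamma$ of exponential growth; you additionally verify this directly via $B_S(m)^n=B_S(mn)$.

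One caution on your aside. For a merely countable group, exponential growth with respect to a proper length function does not guarantee a finitely generated subgroup of exponential growth. For example, $\bigoplus_{n\geq 1}\mathbb{Z}/2$ with $\ell(g)=\max\{n: g_n\neq 0\}$ has $|B(n)|=2^n$, yet every $\mathcal{F}^n$ stays inside the finite group $\langle\mathcal{F}\rangle$, so $h_{\mathrm{alg}}(\mathrm{id})=0$. Restricting to finitely generated $\Gamma$, as the paper does, is therefore necessary rather than merely convenient.
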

\begin{thm}
    \label{infinity_Mdim}
    Let $\Gamma$ be a finitely generated discrete group of exponential growth acting on a compact metric space $X$; $\omega$ be a $2$-cocycle on the transformation groupoid $\mathcal{G}=\Gamma\rtimes X$; $L$ be a \clipnorm\ on $C^{\ast}_{r}(\mathcal{G},\omega)$ satisfying the Leibnitz property such that $\mathrm{Dom}(L)$ contains $\Gamma$. Then
    \begin{displaymath}
        \mathrm{Mdim}_{L}(C^{\ast}_{r}(\mathcal{G},\omega))=+\infty.
    \end{displaymath}
\end{thm}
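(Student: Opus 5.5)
Our plan is to argue by contradiction, combining Corollary \ref{infinityentropy} with Kerr's comparison between product entropy and metric dimension (\cite{Kerr}, the result that for a Leibnitz \clipnorm\ the product entropy of an automorphism is controlled by the metric dimension, more precisely that a finite metric dimension forces finite product entropy for Lipschitz automorphisms). We apply this to $\Phi=\mathrm{id}$, which is induced by the trivial automorphism of $\Gamma$ and the identity homeomorphism of $X$. It is trivially Lipschitz with constant $1$ for $L$, and hence bi-Lipschitz.

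First we take $A=\mathrm{Dom}(L)$. By hypothesis $A$ contains $\Gamma$, so Corollary \ref{infinityentropy} gives $\mathrm{Ent}_{\mathrm{Dom}(L)}(\mathrm{id})=+\infty$, and this quantity is the product entropy of $\mathrm{id}$ in the sense of \cite{Kerr}*{Definition 5.2}. Next we suppose $\mathrm{Mdim}_L(C^{\ast}_r(\mathcal G,\omega))=d<\infty$ and bound $\mathrm{Ent}_{\mathrm{Dom}(L)}(\mathrm{id},\Omega,\delta)$ for each finite $\Omega\subset\mathrm{Dom}(L)_1$. After rescaling we may assume $\Omega\subseteq \lambda\mathcal L_1$ with $\lambda=\max(1,\max_{a\in\Omega}L(a))$.

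By the Leibnitz property, each product $a_1\cdots a_n$ with $a_i\in\Omega$ satisfies $\|a_1\cdots a_n\|\le 1$ and $L(a_1\cdots a_n)\le n\lambda$. Hence $\Omega_n\subseteq n\lambda\,\mathcal L_1$, so
\[
D(\Omega_n,\delta)\le D\bigl(\mathcal L_1,\delta/(n\lambda)\bigr)\le \Bigl(\tfrac{n\lambda}{\delta}\Bigr)^{d+\epsilon}
\]
for $n$ large. Taking $\frac1n\log$ and letting $n\to\infty$ gives $\mathrm{Ent}(\mathrm{id},\Omega,\delta)=0$ for every $\Omega$ and $\delta$. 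Therefore $\mathrm{Ent}_{\mathrm{Dom}(L)}(\mathrm{id})=0$, which contradicts Corollary \ref{infinityentropy}. We conclude $\mathrm{Mdim}_L=+\infty$.

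The main obstacle is the Leibnitz step. Because $\mathcal L_1$ requires both $L\le1$ and $\|\cdot\|\le1$, we must check that products of elements of the unit ball remain in the norm unit ball, which holds by submultiplicativity. We must also check that the Lip-bound on products grows only linearly in $n$, which holds by induction using the Leibnitz inequality and norm bounds at most $1$. Polynomial growth of $D(\mathcal L_1,\cdot)$ in $1/\delta$ at scale $\delta/(n\lambda)$ is exactly what finite metric dimension supplies, so the subexponential bound in $n$ follows. Once these two checks are in place, the rest is bookkeeping.
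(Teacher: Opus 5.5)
Your proof is correct and follows the paper's argument. The paper also obtains a contradiction between Corollary \ref{infinityentropy} (with $A=\mathrm{Dom}(L)$ and $\Phi=\mathrm{id}$) and Kerr's result \cite{Kerr}*{Corollary 5.5} that an isometric automorphism of a Leibniz CQMS of finite metric dimension has zero product entropy. The only difference is that you prove that cited step directly: the Leibniz rule gives $\Omega_n\subseteq n\lambda\,\mathcal L_1$, hence $D(\Omega_n,\delta)$ grows only polynomially in $n$. This checks the step directly for the paper's modified $\mathcal L_1$, which drops adjoint invariance, rather than relying on the citation.
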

\begin{proof}
   If possible let $(C^{\ast}_{r}(\mathcal{G},\omega),L)$ have finite metric dimension. Consider the trivial automorphism of $C^{\ast}_{r}(\mathcal{G},\omega)$. The trivial automorphism is canonically isometric in the sense of \cite{Kerr} for any CQMS structure. Therefore, by \cite{Kerr}*{Corollary 5.5}, $\mathrm{Ent}_{L}(\mathrm{id})=0$. But as $\mathrm{Dom}(L)$ contains $\Gamma$, by Corollary \ref{infinityentropy}, $\mathrm{Ent}_{\mathrm{Dom}(L)}(\mathrm{id})=+\infty$, a contradiction. Therefore, $\mathrm{Mdim}_{L}(C^{\ast}_{r}(\mathcal{G},\omega))=+\infty$.   
\end{proof}
Note that the above theorem does not apply to stratified \clipnorm \ as it does not satisfy the Leibnitz property.
\begin{rem}
    Any \clipnorm\ $L$ on $C^{\ast}_{r}(\Gamma_{1} \rtimes X_{1},\omega_1)$ coming from a spectral triple on natural dense $\ast$-subalgebra $C_{c}(\Gamma, C(X))$ must satisfy the hypothesis of Theorem \ref{infinity_Mdim}. So if $\Gamma$ has exponential growth, one cannot obtain a CQMS structure on $C^{\ast}_{r}(\Gamma\rtimes X,\omega)$ coming from a spectral triple that can ensure finite metric dimension. Also, for the same reason, one can not ensure finite metric dimension for the CQMS structure coming from ergodic action of a compact group as in \cite{Rieffel-1998-Metrics-on-state-from-action-of-cmpt-gp}.
\end{rem}
\subsection{Application: rigidity of CQMS structure} Recall the notion of bi-Lipschitz equivalence of two CQMS structures from Definition \ref{bi-lipschitz_equivalence}. Let $X_1,X_2$ be two compact metric spaces where $X_1$ has finite Kolmogorov dimension. Let $\Gamma_{1}$ be a finitely generated discrete group acting on $X_{1}$ and $\Gamma_{2}$ be a finitely generated discrete group of exponential growth acting on $X_{2}$. Let $\omega_{1},\omega_{2}$ be two normalized $2$-cocycles on $\Gamma_1\rtimes X_1$ and $\Gamma_2\rtimes X_{2}$ respectively. Furthermore, assume that $\Gamma_{1}\rtimes X_{1}$ admits a length function $\ell_{1}$ such that it has polynomial growth of degree say $r$ with respect to $\ell_1$. Then one has a CQMS structure on $C^{\ast}_{r}(\Gamma_{1}\rtimes X_{1},\omega_1)$ for the static stratification and recall the corresponding \clipnorm\ $L^{\mathcal{K}_{S},k_1}_{\ell_1}$ for $k_1>\frac{r}{2}$ from subsection \ref{static_stratification}. Furthermore, assume that $\ell_1$ satisfies Condition \ref{Lip-length}. Similarly assuming twisted rapid decay property on $\Gamma_{2}\rtimes X_{2}$ with respect to a length function $\ell_2$, one obtains a CQMS structure on $C^{\ast}_{r}(\Gamma_{2} \rtimes X_{2},\omega_2)$ coming from the \clipnorm\ $L^{\mathcal{K}_{S},k_{2}}_{\ell_2}$ for $k_{2}$ greater than the rapid decay exponent. With this set up, one has the following theorem:
\begin{thm}
    The CQMS's $(C^{\ast}_{r}(\Gamma_{1}\rtimes X_{1},\omega_1), L^{\mathcal{K}.k_1}_{\ell_1})$ and $(C^{\ast}_{r}(\Gamma_{2}\rtimes X_{2},\omega_2),L^{\mathcal{K}.k_2}_{\ell_2})$ are not bi-Lipschitz equivalent.
\end{thm}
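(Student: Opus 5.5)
The plan is to argue by contradiction using the invariance of metric dimension under bi-Lipschitz equivalence (Theorem \ref{Metric_inv}). That is, I will show that the two CQMS's have different metric dimensions, one finite and one infinite.

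\textbf{First CQMS.} Since $\Gamma_1\rtimes X_1$ has polynomial growth of degree $r$ with respect to the continuous proper length function $\ell_1$, and $\ell_1$ satisfies Condition \eqref{Lip-length}, Theorem \ref{general cocycle bound} applies for every $k_1>\frac{r}{2}$. It gives
\begin{displaymath}
\mathrm{Mdim}_{L^{\mathcal{K}_S,k_1}_{\ell_1}}(C^{\ast}_{r}(\Gamma_1\rtimes X_1,\omega_1))\leq \frac{2r}{2k_1-r}+\dim_B(X_1,d_1)\cdot\frac{2k_1+r}{2k_1-r}.
\end{displaymath}
This is finite because $X_1$ has finite Kolmogorov dimension by hypothesis.

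\textbf{Second CQMS.} Here Theorem \ref{stratification_infinity} gives $\mathrm{Mdim}=+\infty$ for $k_2>p_2$, provided $\Gamma_2$ has exponential growth with respect to the induced length function $\ell_{2,\Gamma}(t)=\sup_x \ell_2(t,x)$ of Lemma \ref{induced length}.

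This is the step I expect to be the main obstacle. The hypothesis only says that $\Gamma_2$ is a finitely generated group of exponential growth, which means exponential growth with respect to a word length. So I need to compare $\ell_{2,\Gamma}$ with a word length $|\cdot|_S$. Let $S$ be a finite symmetric generating set and set $c=\max_{s\in S}\ell_{2,\Gamma}(s)$, which is finite. By Lemma \ref{induced length}, $\ell_{2,\Gamma}$ is a length function on $\Gamma_2$, so subadditivity gives $\ell_{2,\Gamma}(t)\le c\,|t|_S$. Hence $\{|t|_S\le n/c\}\subseteq\{\ell_{2,\Gamma}(t)\le n\}$. The first set has cardinality at least $a^{n/c}$ for some $a>1$ and large $n$, so the $\ell_{2,\Gamma}$-balls also grow exponentially with base $M=a^{1/c}>1$. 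That is exactly the counting input used in the proof of Theorem \ref{stratification_infinity}. If the stated hypothesis of that theorem is read literally, this short argument supplies it. In fact, that proof only uses the lower bound on the cardinality of the $\ell_{\Gamma}$-balls, so it goes through verbatim.

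\textbf{Conclusion.} Suppose a bi-Lipschitz equivalence existed between the two CQMS's. Then Theorem \ref{Metric_inv} would force a finite metric dimension to equal $+\infty$, which is a contradiction. Hence the two CQMS's are not bi-Lipschitz equivalent.
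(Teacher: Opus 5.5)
Your proposal is correct and follows the paper's proof. The first metric dimension is finite by Theorem \ref{general cocycle bound}, the second is $+\infty$ by Theorem \ref{stratification_infinity}, and the bi-Lipschitz invariance of Theorem \ref{Metric_inv} then gives a contradiction. Your extra comparison $\ell_{2,\Gamma}(t)\le c\,|t|_S$ transfers exponential word growth of $\Gamma_2$ to exponential growth of the $\ell_{2,\Gamma}$-balls, which correctly supplies a hypothesis of Theorem \ref{stratification_infinity} that the paper's proof leaves implicit.
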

\begin{proof}
 By assumption, the Kolmogorov dimension of $X_1$ is finite. Therefore, $\mathrm{Mdim}_{L^{\mathcal{K}_{S},k_1}_{\ell_1}} (C_r^{\ast}(\Gamma_1\rtimes X_{1}, \omega_1))$ is finite  by Theorem \ref{general cocycle bound}. But by Theorem \ref{stratification_infinity}, $\mathrm{Mdim}_{L^{\mathcal{K}_{S},k_2}_{\ell_2}} (C_r^{\ast}(\Gamma_2\rtimes X_{2}, \omega_2))$ is $+\infty$. Hence, the result follows from Theorem \ref{bi-lipschitz_equivalence}.  
\end{proof}
As observed earlier, taking $X_{1}$ and $X_{2}$ to be singleton point $\{\ast\}$ with $\Gamma_{1},\Gamma_2$ acting trivially, we recover the corresponding result for group $C^{\ast}$-algebras and extend for their cocycle twist.
\begin{cor}
    Let $\Gamma_{1}$ be a discrete group of polynomial growth of growth exponent $r$ with respect to a length function $\ell_{1}$; $\Gamma_{2}$ be a discrete group of exponential growth for some length function $\ell_{2}$ with the rapid decay property with decay exponent $p;$ $\omega_{1},\omega_{2}$ be normalized $2$-cocycles on $\Gamma_{1},\Gamma_{2}$ respectively. Then for any $k_{1}>\frac{r}{2}$ and $k_{2} > p,$ the CQMS's $(C^{\ast}_{r}(\Gamma_{1},\omega_{1}),L_{\ell_1}^{k_1})$ and $(C^{\ast}_{r}(\Gamma_{2},\omega_{2}),L_{\ell_2}^{k_2})$ are not bi-Lipschitz equivalent.   
\end{cor}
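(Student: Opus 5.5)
The plan is to argue by contradiction, using the fact that metric dimension is a bi-Lipschitz invariant (Theorem \ref{Metric_inv}). Concretely, the proof should show that one of the two CQMS's has finite metric dimension and the other has infinite metric dimension. Then no bi-Lipschitz equivalence can exist between them.

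\emph{Finite side.} Specialize Theorem \ref{general cocycle bound} to the singleton space $X=\{\ast\}$ with the trivial action of $\Gamma_1$, as in the discussion preceding the corollary in the polynomial growth subsection.
\begin{itemize}
\item The group length $\ell_1$ becomes a groupoid length function on $\Gamma_1\rtimes\{\ast\}$ that tautologically satisfies Condition \eqref{Lip-length}.
\item By Remark \ref{grouplengthtogroupoidlength}, the groupoid has polynomial growth of the same exponent $r$.
\item The static stratified \clipnorm\ reduces to $L^{k_1}_{\ell_1}$, and $\dim_B(\{\ast\})=0$.
\end{itemize}
Hence, for every $k_1>\frac r2$,
\begin{displaymath}
\mathrm{Mdim}_{L^{k_1}_{\ell_1}}(C^{\ast}_r(\Gamma_1,\omega_1))\le \frac{2r}{2k_1-r}<\infty .
\end{displaymath}
A small point to check is the properness and continuity hypotheses. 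These are automatic for the discrete groupoid, assuming the length function $\ell_1$ is proper. Properness is in any case implicit, since polynomial growth forces finite balls.

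\emph{Infinite side.} Similarly, realize $C^{\ast}_r(\Gamma_2,\omega_2)$ as $C^{\ast}_r(\Gamma_2\rtimes\{\ast\},\omega_2)$, again with $\ell_2$ viewed as a groupoid length function.
\begin{itemize}
\item The twisted rapid decay of the group with exponent $p$ is exactly twisted rapid decay of this groupoid in the sense of Definition \ref{rapid decay}. The source and range norms coincide here because $\ell_2$ is inverse invariant.
\item The induced length $(\ell_2)_{\Gamma_2}$ from Lemma \ref{induced length} is just $\ell_2$, so $\Gamma_2$ has exponential growth with respect to it.
\end{itemize}
Theorem \ref{stratification_infinity} then gives, for every $k_2>p$,
\begin{displaymath}
\mathrm{Mdim}_{L^{k_2}_{\ell_2}}(C^{\ast}_r(\Gamma_2,\omega_2))=\infty .
\end{displaymath}

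\emph{Conclusion.} Suppose a bi-Lipschitz equivalence existed between the two CQMS's. By Theorem \ref{Metric_inv} their metric dimensions would then agree, which contradicts the two displays above.

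The main obstacle I expect is making sure the singleton reduction is legitimate. This means checking three things.
\begin{itemize}
\item The isomorphism $C^{\ast}_r(\Gamma\rtimes\{\ast\},\omega)\cong C^{\ast}_r(\Gamma,\omega)$ carries the stratified \clipnorm\ exactly onto $L^k_\ell$. For this I would verify that the supremum over $L_t(a_t)$ contributes nothing, since every $a_t$ is a scalar with zero Lipschitz constant.
\item The domain on both sides is $C_c(\Gamma)$.
\item Theorem \ref{Metric_inv} applies, i.e.\ the equivalence is by positive unital maps.
\end{itemize}
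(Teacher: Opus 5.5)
Your proposal is correct and follows the paper's route exactly. The corollary is the case $X_1=X_2=\{\ast\}$, with trivial actions, of the preceding non-equivalence theorem, whose proof combines three ingredients: the finite bound of Theorem \ref{general cocycle bound} (here $\frac{2r}{2k_1-r}$, since $\dim_B(\{\ast\})=0$), the infinite metric dimension from Theorem \ref{stratification_infinity}, and bi-Lipschitz invariance of $\mathrm{Mdim}$ from Theorem \ref{Metric_inv}.
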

In fact, thanks to Theorem \ref{infinity_Mdim}, the dichotomy extends beyond the specific CQMS structure coming from stratified \clipnorm\ on the exponential side. 
\begin{thm}
    The CQMS $(C^{\ast}_{r}(\Gamma_{1}\rtimes X_{1},\omega_1), L^{\mathcal{K}.k_1}_{\ell_1})$ cannot be bi-Lipschitz equivalent to any CQMS structure on $C^{\ast}_{r}(\Gamma_{2}\rtimes X_{2},\omega_2)$ given by any \clipnorm\ $L$ such that $L$ has the Leibnitz property and $\mathrm{Dom}(L)$ contains $\Gamma_2$ in the sense of Theorem \ref{infinity_Mdim}.
\end{thm}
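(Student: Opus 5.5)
Suppose, for a contradiction, that there is a bi-Lipschitz equivalence $\phi\colon C^{\ast}_{r}(\Gamma_{1}\rtimes X_{1},\omega_1)\to C^{\ast}_{r}(\Gamma_{2}\rtimes X_{2},\omega_2)$, where the first algebra carries $L^{\mathcal{K}_S,k_1}_{\ell_1}$ and the second carries a \clipnorm\ $L$ as in the statement. The contradiction will come from metric dimension, the invariant of Theorem \ref{Metric_inv}: one side has finite metric dimension and the other infinite.

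For the first algebra, the standing hypotheses hold: $X_1$ has finite Kolmogorov dimension, $\Gamma_1\rtimes X_1$ has polynomial growth of degree $r$ with respect to $\ell_1$, $\ell_1$ satisfies Condition \eqref{Lip-length}, and $k_1>\frac{r}{2}$. Theorem \ref{general cocycle bound} therefore gives
\begin{displaymath}
\mathrm{Mdim}_{L^{\mathcal{K}_S,k_1}_{\ell_1}}(C^{\ast}_{r}(\Gamma_{1}\rtimes X_{1},\omega_1))\leq \frac{2r}{2k_1-r}+\dim_B(X_1)\cdot\frac{2k_1+r}{2k_1-r}<\infty .
\end{displaymath}

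For the second algebra, $\Gamma_2$ is finitely generated of exponential growth, $L$ is a Leibnitz \clipnorm, and $\mathrm{Dom}(L)$ contains $\{\delta_t1:t\in\Gamma_2\}$. Theorem \ref{infinity_Mdim} gives $\mathrm{Mdim}_L(C^{\ast}_{r}(\Gamma_{2}\rtimes X_{2},\omega_2))=+\infty$. This step needs no rapid decay assumption and places no restriction on the form of $L$ beyond what the statement already assumes.

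Theorem \ref{Metric_inv} says bi-Lipschitz equivalent CQMS have equal metric dimension, so the two computations contradict each other. The only point to check is that the proof of Theorem \ref{Metric_inv} needs nothing beyond Definition \ref{bi-lipschitz_equivalence}, given our modified choice of $\mathcal{L}_1$ (which also bounds the $C^{\ast}$-norm). I would verify this directly:
\begin{itemize}
\item If $L_B(\phi(a))\le\lambda L_A(a)$, then $\phi$ maps $\mathcal{L}_1^A$ into $\max(\lambda,1)\mathcal{L}_1^B$, because a positive unital map is norm contractive.
\item Hence $D(\mathcal{L}_1^A,\delta)\le D(\mathcal{L}_1^B,\delta/\max(\lambda,1,\|\phi^{-1}\|))$, by pulling a $\delta'$-approximating subspace back along $\phi^{-1}$. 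This uses that $\phi^{-1}$ is positive unital, hence contractive, so $\|\phi^{-1}\|\le 1$.
\item Rescaling $\delta$ by a fixed constant does not change the $\limsup$ in the definition of $\mathrm{Mdim}$.
\end{itemize}
The symmetric inequality follows in the same way. Thus no new analysis is needed; the argument combines Theorems \ref{general cocycle bound}, \ref{infinity_Mdim} and \ref{Metric_inv}.
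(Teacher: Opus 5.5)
Your proof is correct and follows the paper's intended argument. Theorem \ref{general cocycle bound} gives finite metric dimension on the polynomial-growth side, Theorem \ref{infinity_Mdim} gives $+\infty$ on the exponential side for any Leibnitz \clipnorm\ whose domain contains $\Gamma_2$, and the bi-Lipschitz invariance of Theorem \ref{Metric_inv} then yields the contradiction. Your direct check that this invariance survives the modified choice of $\mathcal{L}_1$, using that positive unital maps are contractive, is sound and optional.
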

Again taking $X_{1}$ and $X_{2}$ to be singleton point $\{\ast\}$ with $\Gamma_{1},\Gamma_2$ acting trivially, we get the corresponding statement for cocycle twists of group $C^{\ast}$-algebras.
\begin{cor}
Let $\Gamma_1$ be a finitely generated discrete group of polynomial growth with growth exponent $r$ with length function $\ell_1$; $\Gamma_2$ be a group of exponential growth; $\omega_{1},\omega_2$ be two $2$-cocycles on $\Gamma_1,\Gamma_2$ respectively. Then for any $k_1>\frac{r}{2}$, the CQMS $(C^{\ast}_{r}(\Gamma_1,\omega_1),L^{k_1}_{\ell_1})$ can not be bi-Lipschitz equivalent to any CQMS structure on $C^{\ast}_{r}(\Gamma_1,\omega_1)$ given by any \clipnorm\ $L$ which satisfies the Leibnitz property and $\mathrm{Dom}(L)$ contains $\Gamma_2.$   
\end{cor}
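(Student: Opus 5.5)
The plan is to argue by contradiction through the metric dimension, which is a bi-Lipschitz invariant by Theorem \ref{Metric_inv}. We compare the two numbers $\mathrm{Mdim}_{L^{k_1}_{\ell_1}}(C^{\ast}_{r}(\Gamma_1,\omega_1))$ and $\mathrm{Mdim}_{L}(C^{\ast}_{r}(\Gamma_1,\omega_1))$. Both are computed on the same algebra $C^{\ast}_{r}(\Gamma_1,\omega_1)$, but with respect to the two different \clipnorms. We show that the first is finite and the second is $+\infty$.

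\textbf{Step 1: finiteness for $L^{k_1}_{\ell_1}$.} View $C^{\ast}_{r}(\Gamma_1,\omega_1)$ as $C^{\ast}_{r}(\Gamma_1\rtimes\{\ast\},\omega_1)$ with the trivial action. The corollary following Theorem \ref{Mdim lower bound} gives
\begin{displaymath}
\mathrm{Mdim}_{L^{k_1}_{\ell_1}}(C^{\ast}_{r}(\Gamma_1,\omega_1))\leq\frac{2r}{2k_1-r}<\infty
\end{displaymath}
for $k_1>\frac{r}{2}$. This step is routine.

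\textbf{Step 2: $\mathrm{Mdim}_L=+\infty$.} Here we rerun the proof of Theorem \ref{infinity_Mdim} with the algebra $C^{\ast}_{r}(\Gamma_1,\omega_1)$ in place of $C^{\ast}_{r}(\Gamma_2\rtimes X_2,\omega_2)$. The phrase ``$\mathrm{Dom}(L)$ contains $\Gamma_2$'' must now be given a meaning on this algebra. I would fix the following meaning: there is an injective assignment $t\mapsto u_t$ of unitaries $u_t\in\mathrm{Dom}(L)$, $t\in\Gamma_2$, such that
\begin{itemize}
\item the products satisfy $u_su_t\in\mathbb{T}u_{st}$, so they are a projective copy of $\Gamma_2$;
\item there is a state $\tau$ (for instance the canonical trace) under which the vectors $\{[u_t]\}_{t\in\Gamma_2}$ are orthonormal in the GNS space $L^2(\tau)$.
\end{itemize}
This is exactly what the argument of Theorem \ref{infinity_Mdim} actually uses.

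Suppose, for contradiction, that $\mathrm{Mdim}_L<\infty$. The identity automorphism is isometric and $L$ is Leibniz, so \cite{Kerr}*{Corollary 5.5} gives $\mathrm{Ent}_{\mathrm{Dom}(L)}(\mathrm{id})=0$. On the other hand, take a finite symmetric generating set $\mathcal F\subseteq\Gamma_2$ and put $\Omega=\{u_t:t\in\mathcal F\}$. This set lies in the unit ball of $\mathrm{Dom}(L)$, because the $u_t$ are unitaries.

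The set $\Omega_n=\Omega^n$ then contains, up to scalars of modulus one, the elements $u_s$ for $s$ ranging over the word ball $\mathcal F^n$. These give at least $|\mathcal F^n|\geq M^n$ orthonormal vectors in $L^2(\tau)$, for some $M>1$, by exponential growth of $\Gamma_2$. Voiculescu's lemma (Theorem \ref{Voiculescu}) then gives
\begin{displaymath}
D(\Omega_n,\tfrac12)\geq\tfrac34 M^n .
\end{displaymath}
Here the phase factors are harmless, since $D$ is unchanged under unimodular rescaling of individual elements. Passing from $\mathcal F$ to its powers $\mathcal F^j$, as in Theorem \ref{entropy_lowerbound} and Corollary \ref{infinityentropy}, pushes the growth rate to $+\infty$. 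So $\mathrm{Ent}_{\mathrm{Dom}(L)}(\mathrm{id})=+\infty$, which is a contradiction. Hence $\mathrm{Mdim}_L=+\infty$.

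\textbf{Step 3: conclusion.} If the two CQMS were bi-Lipschitz equivalent, Theorem \ref{Metric_inv} would force their metric dimensions to agree, which contradicts Steps 1 and 2.

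\textbf{Main obstacle.} The real difficulty is Step 2: making the hypothesis ``$\mathrm{Dom}(L)$ contains $\Gamma_2$'' precise inside $C^{\ast}_{r}(\Gamma_1,\omega_1)$. The entropy lower bound needs the orthonormality of the copy of $\Gamma_2$ in some GNS space. I would build this into the definition, as above. If only an abstract embedding is available, I would instead try to obtain orthonormality from the canonical trace on $C^{\ast}_{r}(\Gamma_1,\omega_1)$. That requires the unitaries $u_t$ with $t\neq e$ to have trace zero, which should be checked or assumed.
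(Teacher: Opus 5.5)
Your proposal follows the paper's own proof. The paper obtains this corollary by specializing the preceding theorem to $X_1=X_2=\{\ast\}$. That theorem is exactly your three steps: finiteness from Theorem \ref{general cocycle bound} (the polynomial-growth corollary you quote), infiniteness from Theorem \ref{infinity_Mdim} via Kerr's Corollary 5.5 and Voiculescu's lemma, and bi-Lipschitz invariance from Theorem \ref{Metric_inv}.

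The ``main obstacle'' you isolate comes from a typo in the statement. The second algebra should be $C^{\ast}_{r}(\Gamma_2,\omega_2)$, as the derivation from the preceding theorem makes clear. There, ``$\mathrm{Dom}(L)$ contains $\Gamma_2$'' means $\delta_t\in\mathrm{Dom}(L)$ for all $t\in\Gamma_2$. These unitaries satisfy $\delta_s\delta_t=\omega_2(s,t)\delta_{st}$, and they are orthonormal in the GNS space of the canonical trace by Lemma \ref{Voiculescu_orthonormal} with $X$ a point. So both of your bullet conditions hold automatically, and your Step 2 applies verbatim with $u_t=\delta_t$.

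Your caution about the literal reading is justified: there the orthonormality you impose is indispensable, since a mere unitary copy of $\Gamma_2$ would not suffice. Here is a counterexample to the naive version.
\begin{itemize}
\item Take $\Gamma_1=\mathbb{Z}\times(\mathbb{Z}/2)^2$ with word length $\ell_1$, so $r=1$, and take $k_1=1$.
\item Use the cocycle $\omega_1((n,a,b),(n',a',b'))=(-1)^{ba'}$, so that $C^{\ast}_{r}(\Gamma_1,\omega_1)\cong C(\mathbb{T})\otimes M_2$.
\item Then $L:=L^{1}_{\ell_1}$ is a single commutator with $M_{\ell_1}$, hence Leibniz.
\item Its domain contains $1\otimes M_2$, which contains unitaries generating a free group.
\item Yet $L$ is trivially bi-Lipschitz equivalent to $L^{k_1}_{\ell_1}$, since it is the same seminorm.
\end{itemize}

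A minor point: passing from $\mathcal F$ to $\mathcal F^j$ is unnecessary. The rate $\log M>0$ already contradicts the vanishing of the entropy.
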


\vspace{2mm}

\bibliographystyle{amsplain}
\bibliography{References}

@article {Kerr,
    author = {Kerr, D.},
     title = {Dimension and {D}ynamical {E}ntropy for {M}etrized {$C^{\ast}$}-algebras},
   journal = {Comm. Math. Phys.},
  fjournal = {Documenta Mathematica},
    volume = {32},
      year = {2003},
     pages = {501-534},
      issn = {1431-0635,1431-0643},
}

@book{brownc,
  title={{$C^{\ast}$}-algebras and {F}inite-dimensional {A}pproximations},
  author={Brown, N.P. and Ozawa, N.},
  isbn={9780821872505},
  series={Graduate {S}tudies in {M}athematics},
  url={https://books.google.co.in/books?id=F_kjj0teG2IC},
  year={2008},
  publisher={American Mathematical Soc.}
}

@book{williams2019tool,
  title={A {T}ool {K}it for {G}roupoid {$C^{\ast}$} -{A}lgebras},
  author={Williams, D.P.},
  isbn={9781470451332},
  lccn={2019016171},
  series={Mathematical Surveys and Monographs},
  url={https://books.google.co.in/books?id=5q6xDwAAQBAJ},
  year={2019},
  publisher={American Mathematical Society}
}

@book{sims2020operator,
  title={Operator Algebras and Dynamics: Groupoids, Crossed Products, and Rokhlin Dimension},
  author={Sims, A. and Szab{\'o}, G. and Williams, D.},
  isbn={9783030397135},
  series={Advanced Courses in Mathematics - CRM Barcelona},
  url={https://books.google.co.in/books?id=BdfsDwAAQBAJ},
  year={2020},
  publisher={Springer International Publishing}
}

@article {Rieffel-1998-Metrics-on-state-from-action-of-cmpt-gp,
    AUTHOR = {Rieffel, M. A.},
     TITLE = {Metrics on states from actions of compact groups},
   JOURNAL = {Doc. Math.},
  FJOURNAL = {Documenta Mathematica},
    VOLUME = {3},
      YEAR = {1998},
     PAGES = {215--229},
      ISSN = {1431-0635,1431-0643},
   MRCLASS = {46L87 (46L55 58B30 60B10)},
  MRNUMBER = {1647515},
MRREVIEWER = {A.\ I.\ Danilenko},
}

@article {Long-Wu-2017-Twisted-group-C-alg-as-CQMS,
    AUTHOR = {Long, B. and Wu, W.},
     TITLE = {Twisted {G}roup {$C^{\ast}$}-{A}lgebras as {C}ompact {Q}uantum {M}etric {S}paces},
   JOURNAL = {Results Math.},
  FJOURNAL = {Results in Mathematics},
    VOLUME = {71},
      YEAR = {2017},
    NUMBER = {3-4},
     PAGES = {911--931},
      ISSN = {1422-6383,1420-9012},
   MRCLASS = {46L85 (20J06 22D15 58B34)},
  MRNUMBER = {3648451},
       DOI = {10.1007/s00025-016-0562-7},
       URL = {https://doi.org/10.1007/s00025-016-0562-7},
}

@article {Rieffel-2002-Group-C-alg-as-compact-quan-metr-sp,
    AUTHOR = {Rieffel, M. A.},
     TITLE = {Group {$C^{\ast}$}-algebras as compact quantum metric spaces},
   JOURNAL = {Doc. Math.},
  FJOURNAL = {Documenta Mathematica},
    VOLUME = {7},
      YEAR = {2002},
     PAGES = {605--651},
      ISSN = {1431-0635,1431-0643},
   MRCLASS = {22D25 (20F65 20F69 46L55 46L87 58J42)},
  MRNUMBER = {2015055},
MRREVIEWER = {Alain\ Valette},
}

@article{Christensen, author={Antonescu, C. and Christensen, E.},
 title={Metrics on group {$C^{\ast}$}-algebras and a non-commutative {A}rzel\`a-{A}scoli theorem},
   Journal={J. Funct. Anal.},
   volume={214}, 
   year={2004},
   Issue={2},
   pages={247-259}
   }

@article{Connes1,
   author={Connes, A.},
   title={Compact metric spaces, {F}redholm modules and hyperfiniteness},
   journal={Ergod. Th. Dynam. Sys.},
   volume={9},
   date={1989},
   pages={207-220},
   year={1989}
   }

@book{delaharpe,
   author={de la Harpe, P.},
   title={Topics in {G}eometric {G}roup {T}heory},
   publisher={The University of Chicago Press},
   year={2000}
   }

@article{Rieffel2,
   author={Ozawa, N. and Rieffel, M.A.},
   title={Hyperbolic group {$C^{\ast}$}-algebras and free product {$C^{\ast}$}-algebras as compact quantum metric space},
   Journal={Canad. J. Math.},
   volume={57(5)},
   year={2005},
   pages={1056-1079}
   }

@article{Rieffel1,
   author={Rieffel, M.A.},
  title={Metrics on state spaces},
   Journal={Doc. Math.},
   volume={4}, 
   year={1999},
   pages={559-600}
   }

@article{Voiculescu,
   author={Voiculescu, D.},
   title={Dynamical approximation entropies and topological entropy in operator algebras},
   journal={Comm. Math. Phys.},
   volume={170},
   year={1995},
   pages={249-281}
   }

@article{nica2010degree,
  title={On the degree of rapid decay},
  author={Nica, B.},
  journal={Proc. Amer. Math. Soc.},
  volume={138},
  number={7},
  pages={2341--2347},
  year={2010}
}

@article{Dikranjan2013TopologicalEA,
  title={Topological Entropy and Algebraic Entropy for group endomorphisms},
  author={Dikranjan, D. and Bruno, A. G.},
  journal={arXiv: General Topology},
  year={2013},
  url={https://api.semanticscholar.org/CorpusID:118325710}
}

@article {MR943303,
    AUTHOR = {Jolissaint, P.},
     TITLE = {Rapidly decreasing functions in reduced {$C^*$}-algebras of
              groups},
   JOURNAL = {Trans. Amer. Math. Soc.},
  FJOURNAL = {Transactions of the American Mathematical Society},
    VOLUME = {317},
      YEAR = {1990},
    NUMBER = {1},
     PAGES = {167--196},
      ISSN = {0002-9947,1088-6850},
   MRCLASS = {22D25 (43A15 46L99)},
  MRNUMBER = {943303},
MRREVIEWER = {A.\ Derighetti},
       DOI = {10.2307/2001458},
       URL = {https://doi.org/10.2307/2001458},
}

@article{austad2026quantum,
  title={Quantum metrics from length functions on \'etale groupoids},
  author={Austad, A.},
  journal={arXiv preprint arXiv:2602.20032},
  year={2026}
}

@article {Weygandt-2024-Rapid-decay-for-etale-gpd,
    AUTHOR = {Weygandt, A.},
     TITLE = {Rapid decay for principal \'etale groupoids},
   JOURNAL = {New York J. Math.},
  FJOURNAL = {New York Journal of Mathematics},
    VOLUME = {30},
      YEAR = {2024},
     PAGES = {956--978},
      ISSN = {1076-9803},
   MRCLASS = {22A22 (46L05)},
  MRNUMBER = {4773307},
}

@article{chattopadhyay2026metric,
  title={Metric dimension and product entropy of group {$C^{\ast}$}-algebras},
  author={Chattopadhyay, A. and Joardar, S.},
  journal={arXiv preprint arXiv:2603.13936},
  year={2026}
}

@article{ARMSTRONG2022109551,
title = {A uniqueness theorem for twisted groupoid {$C^{\ast}$}-algebras},
journal = {J. Funct. Anal.},
volume = {283},
number = {6},
pages = {109551},
year = {2022},
issn = {0022-1236},
doi = {https://doi.org/10.1016/j.jfa.2022.109551},
url = {https://www.sciencedirect.com/science/article/pii/S0022123622001719},
author = {Armstrong, B.},
}

@article{renault2008cartan,
  title={Cartan Subalgebras in {$C^*$}-Algebras},
  author={Renault, J.},
  journal={rish Math. Soc. Bull.},
  volume={61},
  pages={29--63},
  year={2008},
  url={https://tcd.ie},
}

@article{austad2024polynomial,
  author    = {Austad, A. and Ortega, E. and Palmstr{\o}m, M.},
  title     = {Polynomial growth and property {$\mathrm {RD}_p$} for {\'e}tale groupoids with applications to {K}-theory},
  journal   = {J. Noncommut. Geom.},
  volume    = {18},
  number    = {2},
  pages     = {727--757},
  year      = {2024},
  publisher = {EMS Press},
  doi       = {10.4171/JNCG/549},
}

@article{joardar2025metrics,
  title={Metrics on {$C^{\ast}$}-algebras of \'Etale groupoids from length functions},
  author={Chattopadhyay, A. and Hossain, Md A. and Joardar, S.},
  journal={arXiv preprint arXiv:2504.13530},
  year={2025},
  pages={13},
}

@article{emerson2018k,
  title={K-homological finiteness and hyperbolic groups},
  author={Emerson, H. and Nica, B.},
  journal={J. Reine Angew. Math.},
  volume={2018},
  number={745},
  pages={189--229},
  year={2018},
  publisher={De Gruyter}
}

@article{latremoliere2016quantum,
  title={The quantum {G}romov-{H}ausdorff propinquity},
  author={Latr{\'e}moli{\`e}re, F.},
  journal={Trans. Amer. Math. Soc.},
  volume={368},
  number={1},
  pages={365--411},
  year={2016}
}

@article{latremoliere2015quantum,
  title={Quantum metric spaces and the {G}romov-{H}ausdorff propinquity},
  author={Latr{\'e}moli{\`e}re, F.},
  journal={arXiv preprint arXiv:1506.04341},
  year={2015}
}

@article{latremoliere2022gromov,
  title={The {G}romov-{H}ausdorff propinquity for metric spectral triples},
  author={Latr{\'e}moli{\`e}re, F.},
  journal={Adv. Math.},
  volume={404},
  pages={108393},
  year={2022},
  publisher={Elsevier}
}

@article{latremoliere2013quantum,
  title={The Quantum {G}romov-{H}ausdorff Propinquity},
  author={Latr{\'e}moli{\`e}re, F.},
  journal={arXiv preprint arXiv:1302.4058},
  year={2013}
}

@article{latremoliere2020convergence,
  title={Convergence of {H}eisenberg modules over quantum 2-tori for the modular {G}romov-{H}ausdorff propinquity},
  author={Latr{\'e}moli{\`e}re, F.},
  journal={J. Operator Theory},
  volume={84},
  number={1},
  pages={211--237},
  year={2020},
  publisher={JSTOR}
}

@article{buss2026rapid,
  title={Rapid decay and localizability for Fell bundles over {\'e}tale Groupoids},
  author={Buss, A. and Karmakar, P.},
  journal={arXiv preprint arXiv:2604.23907},
  year={2026}
}

@inproceedings{aguilar2023domains,
  title={Domains of quantum metrics on {AF} algebras},
  author={Aguilar, K. and Hjelmborg, K.B. and Latr{\'e}moli{\`e}re, F.},
  booktitle={International Workshop on Operator Theory and its Applications},
  pages={1--14},
  year={2023},
  organization={Springer}
}

@book{renault1980groupoid,
  title={A groupoid approach to {$C^{\ast}$}-algebras},
  author={Renault, J.},
  volume={793},
  year={1980},
  publisher={Springer-Verlag}
}

@article{bowen1971entropy, 
 title={Entropy for group endomorphisms and homogeneous spaces}, 
 author={Bowen, R.}, 
 journal={Trans. Amer. Math. Soc.}, 
 volume={153}, 
 pages={401--414}, 
 year={1971}, 
 publisher={American Mathematical Society}, 
 doi={10.1090/S0002-9947-1971-0274707-X},
 }

@article{Chatterji2016,
  author    = {Chatterji, I.},
  title     = {Introduction to the Rapid Decay property},
  journal   = {arXiv preprint arXiv:1604.06387},
  year      = {2016},
  url       = {https://arxiv.org/abs/1604.06387}
}

@article{ChatterjiRuane2005,
  author    = {Chatterji, I. and Ruane, K.},
  title     = {Some geometric groups with rapid decay},
  journal   = {Geom. Funct. Anal.},
  volume    = {15},
  number    = {2},
  pages     = {311--339},
  year      = {2005},
  publisher = {Springer}
}

@article {MR2504529,
    AUTHOR = {Li, H.},
     TITLE = {Compact quantum metric spaces and ergodic actions of compact
              quantum groups},
   JOURNAL = {J. Funct. Anal.},
  FJOURNAL = {Journal of Functional Analysis},
    VOLUME = {256},
      YEAR = {2009},
    NUMBER = {10},
     PAGES = {3368--3408},
      ISSN = {0022-1236,1096-0783},
   MRCLASS = {46L55 (20G42)},
  MRNUMBER = {2504529},
MRREVIEWER = {Kenny\ De Commer},
       DOI = {10.1016/j.jfa.2008.09.009},
       URL = {https://doi.org/10.1016/j.jfa.2008.09.009},
}

@article {MR124720,
    AUTHOR = {Kolmogorov, A. N. and Tihomirov, V. M.},
     TITLE = {{$\varepsilon $}-entropy and {$\varepsilon $}-capacity of sets
              in functional space},
   JOURNAL = {Amer. Math. Soc. Transl. (2)},
  FJOURNAL = {Amer. Math. Soc. Transl. (2)},
    VOLUME = {17},
      YEAR = {1961},
     PAGES = {277--364},
   MRCLASS = {46.30 (41.00)},
  MRNUMBER = {124720},
}

@book {MR1187754,
    AUTHOR = {Makarov, B. M. and Goluzina, M. G. and Lodkin, A. A. and
              Podkorytov, A. N.},
     TITLE = {Selected problems in real analysis},
    SERIES = {Transl. Math. Monogr.},
    VOLUME = {107},
      NOTE = {Translated from the Russian by H. H. McFaden},
 PUBLISHER = {American Mathematical Society, Providence, RI},
      YEAR = {1992},
     PAGES = {x+370},
      ISBN = {0-8218-4559-4},
   MRCLASS = {26-01 (00A07)},
  MRNUMBER = {1187754},
MRREVIEWER = {R.\ G.\ Bartle},
       DOI = {10.1090/mmono/107},
       URL = {https://doi.org/10.1090/mmono/107},
}

\end{document}